\definecolor{labelkey}{rgb}{0.6,0,1}
\newcounter{corr}
\definecolor{violet}{rgb}{0.580,0.,0.827}
\newcommand{\corr}[3]{\typeout{Warning : a correction remains in page
\thepage}
				\stepcounter{corr}        
				{\color{blue}\ifmmode\text{\,\sout{\ensuremath{#1}}\,}\else\sout{#1}\fi}
       {\color{red}#2}
       {\color{violet} #3}}
\numberwithin{equation}{section}
 \newtheorem{thm}{Theorem}[section]
 \newtheorem{lem}[thm]{Lemma}
 \newtheorem{exam}[thm]{Example}
 \newtheorem{prop}[thm]{Proposition}
 \newtheorem{cor}[thm]{Corollary}
 \newtheorem{rem}[thm]{Remark}
 \newtheorem{defn}[thm]{Definition}
 \def\Id{\mathop{\rm Id}\nolimits}
 \def\dist{\mathop{\rm dist}\nolimits}
 \def\span{\mathop{\rm Span}\nolimits}
\newcommand{\A}{{\mathcal A}}   
\newcommand{\B}{{\mathcal B}}   
\newcommand{\C}{{\mathbb C}}   
 \newcommand{\R}{{\mathbb R}}           
\newcommand{\N}{{\mathbb N}^*}        
\newcommand{\K}{{\mathbb K}}        
\newcommand{\Z}{{\mathbb Z}}          
\newcommand{\U}{{\mathcal U}}   
\newcommand{\W}{{\mathcal W}}  
\newcommand{\V}{{\mathcal V}} 
\newcommand{\X}{{\mathcal X}} 
\newcommand{\Y}{{\mathcal Y}}
 \newcommand{\GH}{{\mathcal H}}
\author{W. Arendt}
\address{Wolfgang Arendt, Institute of Applied Analysis, University of Ulm. Helmholtzstr. 18, D-89069 Ulm (Germany)} 
\email{wolfgang.arendt@uni-ulm.de}
\author{I. Chalendar}
\address{Isabelle Chalendar,  Universit\'e Paris-Est, LAMA, (UMR 8050), UPEM, UPEC, CNRS, F-77454, Marne-la-Vallée (France)}
\email{isabelle.chalendar@u-pem.fr}
\author{R. Eymard}
\address{Robert Eymard,  Universit\'e Paris-Est, LAMA, (UMR 8050), UPEM, UPEC, CNRS, F-77454, Marne-la-Vallée (France)}
\email{robert.eymard@u-pem.fr}
\title[Galerkin approximation]{Galerkin approximation of linear problems\\ in Banach and Hilbert spaces}
\keywords{Galerkin approximation, sesquilinear coercive forms,   approximation properties in Banach spaces, essential coercivity, universal Galerkin convergence}
\subjclass[2010]{65N30,47A07,47A52,46B20}
\begin{document}	

\begin{abstract}   
 In this paper we study the conforming Galerkin approximation of the problem: find $u\in\U$ such that $a(u,v) = \langle L, v \rangle$ for all $v\in \V$, where $\U$ and $\V$ are  Hilbert or Banach  spaces, $a$ is  a continuous bilinear or sesquilinear form  and $L\in\V'$ a given data.  The approximate solution is sought in a finite dimensional subspace of $\U$, and test functions are taken in a finite dimensional subspace of $\V$. We provide a necessary and sufficient  condition on the form $a$ for convergence  of the Galerkin approximation, which is also  equivalent to  convergence of the Galerkin approximation for the adjoint problem. We also characterize the fact that  $\U$ has a finite dimensional Schauder decomposition in terms of properties related to the Galerkin approximation. 
 In the case of Hilbert spaces, we prove that the only bilinear or sesquilinear forms for which any Galerkin approximation converges (this property is called the \emph{universal Galerkin property}) are the \emph{essentially coercive} forms. In this case, a generalization of the Aubin-Nitsche Theorem leads to optimal a priori estimates  in terms of regularity properties of  the right-hand side $L$, as shown by several applications. 
 Finally, a section entitled "Supplement"  provides some consequences of our results for the approximation of saddle point problems.
\end{abstract}	
\maketitle
   
\section{Introduction}

Due to its practical importance, the approximation of elliptic problems in Banach or Hilbert spaces has been the object of numerous works. In Hilbert spaces, a crucial result is the simultaneous use of the Lax-Milgram theorem and of C\'ea's Lemma to conclude the convergence of conforming Galerkin methods in the case that the elliptic problem is resulting from a coercive bilinear or sesquilinear form.
 \\

But the coercivity property is lost in many practical situations: for example, consider the Laplace operator perturbed by a convection term or a reaction term (see the example in Section \ref{sec:6}), and the approximation of non-coercive forms must be studied as well. For particular bilinear or sesquilinear forms, the Fredholm alternative provides an existence result in the case where the problem is well-posed in the Hadamard sense. Such results have been extended by Banach, Ne\v cas, Babu\v ska and Brezzi in the case of bilinear forms on Banach spaces. The conforming approximation of such problems enters into the framework of the so-called Petrov--Galerkin methods, for which sufficient conditions for the convergence are classical (see for example the references \cite{AU19,CC13,EG04,XZ03} which also include the case of non-conforming approximations). 

Nevertheless, these sufficient conditions do not guarantee that for a given problem, there exists a converging Galerkin approximation. Moreover, they do not answer the following question, which is important in practice: under which conditions does the Galerkin approximation exist and converge to the solution of the continuous problem for \emph{any} sufficiently fine approximation (for example, letting the degree of an approximating polynomial or the number of modes in a Fourier approximation be high enough, or letting the size of the mesh for a finite element method be small enough, and, in the case of Hilbert spaces, using the Galerkin method and not the Petrov--Galerkin method)?

The aim of this paper is precisely to address such questions for not necessarily coercive bilinear or sesquilinear forms defined on some Banach or Hilbert spaces (we treat the real and complex cases simultaneously). We shall restrict this study to conforming approximations, in the sense that the approximation will be sought in subspaces of the underlying space, using the continuous bilinear or sesquilinear form. 
 
 In the first part we consider the Banach space framework. Given a continuous bilinear form $a:\U\times\V\to\R$ where $\U$ and $\V$ are reflexive, separable Banach spaces, one is interested in the existence and the convergence of the Galerkin approximation to $u$, where $u$ is the solution of the following problem:
 \begin{equation}\label{eq:int1}
 \hbox{Find }u\in\U\hbox{ such that }
 a(u,v)=\langle L,v\rangle, \hbox{ for all } v\in\V,    
 \end{equation}  
where $L\in\V'$ is given (the existence and uniqueness of $u$ are obtained under the Banach-Ne\v cas-Babu\v ska conditions, see for example \cite[Theorem 2.6]{EG04}). For approximating sequences $(\U_n)_{n\in\N}$, $(\V_n)_{n\in\N}$ (see Section~\ref{sec:3} for the definition), the Galerkin approximation of \eqref{eq:int1} is given by the sequence $(u_n)_{n\in\N}$ such that, for any $n\in\N$, $u_n$ is the solution of the following finite dimensional linear problem:
    \begin{equation}\label{eq:int2}
  \hbox{Find } u_n\in\U_n\hbox{ such that } a(u_n,\chi)=\langle L,\chi\rangle,  \hbox{ for all } \chi\in\V_n.
   \end{equation}   
It is known that, if $\dim\U_n = \dim\V_n$, the uniform Banach-Ne\v cas-Babu\v ska condition (BNB) given in Section~\ref{sec:3}  is sufficient for these existence and convergence properties (see for example \cite[Theorem 2.24]{EG04}). We show here that this condition is also necessary and, surprisingly, that the convergence of the Galerkin approximation of \eqref{eq:int1} is equivalent to that of the Galerkin approximation of the dual problem.

 These two results seem to be new and are presented in Section~\ref{sec:3}.

In Section~\ref{sec:new3}, we ask the following:
given a form $a$ such that (\ref{eq:int1}) is well-posed, do there always exist approximating sequences in $\U$ and $\V$ such that the Galerkin approximation converges? 
    Surprisingly, the answer is negative (even though the spaces $\U$ and $\V$ are supposed to be reflexive and separable). In fact, such approximating sequences exist if and only if  the Banach space  $\U$ has a finite dimensional Schauder decomposition, a property which is strictly more general than having a Schauder basis.

In the remainder of the paper, merely Hilbert spaces are considered and moreover we assume that $\U=\V$ and $\U_n=\V_n$ for all $n\in\N$. Given is a continuous bilinear form $a:\V\times \V\to\R$, where $\V$ is a separable Hilbert space. Assuming that (\ref{eq:int1}) is well-posed, we show that the convergence of the Galerkin approximation for \emph{all} approximating sequences in $\V$ (which we call here the \emph{universal Galerkin property}) is equivalent to $a$ being 
\emph{essentially coercive}, which means that a compact perturbation of $a$ is coercive.
This notion of essential coercivity can also be characterized by a certain \emph{weak-strong inverse continuity} of $a$, which, in fact, we take as definition of essential coercivity (Definition~\ref{def:2.1}). 

We then derive improved a priori error estimates by generalizing the Aubin--Nitsche argument to  non-symmetric forms and also allowing the given right hand side  $L$ of (\ref{eq:int1}) to belong to arbitrary interpolation spaces in between $\V$ and $\V'$. These generalizations are applied to two cases: the approximation of selfadjoint positive operators with compact resolvent (in this case, it is seen that our a priori error estimate is optimal, with the fastest speed of convergence for $L$ in $\V$, the slowest for $L\in\V'$) and the finite element approximation of a non-selfadjoint elliptic differential operator, including convection and reaction terms which is indeed essentially coercive.

	We finally give some further historical remarks in Section~\ref{sec:8}, where we consider saddle point problems. As a consequence of our results, we show that Brezzi's conditions,  implying the convergence of mixed approximations (which are the Galerkin ones in the case of saddle point problems), are also necessary for this convergence.

To avoid any ambiguity, in the sequel, we let ${\mathbb N}=\{0,1,2,\cdots\}$  and $\N={\mathbb N}\setminus\{0\}$.   \\

The paper is organized as follows:
\tableofcontents 

\section{Petrov--Galerkin approximation}\label{sec:3}
In this section we give a characterization of the convergence of Petrov--Galerkin methods, that, for short, we call Galerkin convergence. A basic definition is the following.
\begin{defn}[Approximating sequences of Banach spaces]\label{def:appseq}
	Let $\V$ be a separable Banach space.  
	An \emph{approximating sequence} of $\V$ is a sequence $(\V_n)_{n\in\N}$ of finite dimensional subspaces of $\V$ such that 
	\[  \dist (v,\V_n)\to 0\mbox{ as }n\to\infty\]
	for all $v\in\V$, where  $\dist (u,\V_n):=\inf\{ \|u-\chi\|:\chi\in \V_n\}$.    
\end{defn}

Now let $\U$ and $\V$ be two separable, reflexive Banach spaces over $\K=\R$ or $\C$ and $a: \U \times \V \to \K $  be a continuous  sesquilinear form such that 
\[   | a(u,v)|\leq M \|u\|_{\U}\| v \|_{\V}\mbox{ for all }u\in \U,v\in \V  \] 
where $M>0$ is a constant. We assume that $\U$ and $\V$ are infinite dimensional and that $(\U_n)_{n\in\N}$ and $(\V_n)_{n\in\N}$ are approximating sequences of $\U$ and $\V$ respectively.
 We also assume throughout that 
\[0\neq \dim \U_n=\dim \V_n    \mbox{ for all }n\in\N.\]
Given $L\in \V'$ we search a solution $u$ of the problem:  
\begin{equation}\label{eq:3.1}
\hbox{find }u\in\U\hbox{ such that }
 a(u,v)=\langle L,v\rangle, \hbox{ for all } v\in\V.
\end{equation} 
Moreover we want to approximate such a solution by $u_n$, the solution of the problem:
\begin{equation}\label{eq:3.2}
\hbox{find } u_n\in\U_n\hbox{ such that } a(u_n,\chi)=\langle L,\chi\rangle,  \hbox{ for all } \chi\in\V_n.  
\end{equation}  
Note that, given $n\in\N$,  there exists a unique $u_n\in \U_n$ satisfying (\ref{eq:3.2}) if and only if 
\begin{equation}\label{eq:3.3}
\hbox{for all }u\in\U_n, \big(a(u,\chi)=0 \mbox{ for all } \chi\in\V_n\big)\Rightarrow u=0,
\end{equation}
since, by assumption,  $\U_n$ and $\V_n$ have the same finite dimension. 

Let us briefly recall the origin of the Banach-Ne\v cas-Babu\v ska conditions for the well-posedness of (\ref{eq:3.1}) as stated for example in \cite{EG04,XZ03,AU19}  (equivalent conditions are proposed in \cite{CC13} in the case of Hilbert spaces).
Let us consider the associated operator $\A:\U\to \V'$ defined by 
\[ \langle \A u,v\rangle=a(u,v) \quad (u\in\U,v\in\V).   \]
Then $\A$ is linear, bounded with $\|\A\|\leq M$. By the Inverse Mapping Theorem, $\A$ has closed range and is injective if and only if there exists $\beta>0$ such that 
\begin{equation}\label{eq:3.4}
\|\A u\|_{\V'}\geq \beta \|u\|_\U \mbox{ for all }u\in\U.
\end{equation} 
By the definition of the norm of $\V'$, this can be reformulated by 
\begin{equation}\label{eq:3.5}
  \sup_{  \|v\|_\V\leq 1}|a(u,v)|\geq \beta \|u\|_{\U}  \mbox{ for all }u\in\U.
 \end{equation}    
  
Recall that $\A$ is invertible if and only if $\A$ is injective and  has a closed and dense range. By the Hahn-Banach theorem, $\A$ has dense range if and only if no non-zero continuous functional on  $\V'$ annihilates the range of $\A$. By reflexivity, this is equivalent to the following uniqueness property:

\begin{equation}\label{eq:3.6}
\mbox{ for all } v\in\V,\big( a(u,v)=0 \mbox{  for all }  u\in\U\big)\Rightarrow v=0. 
\end{equation}   
Thus (\ref{eq:3.1}) is \emph{well-posed} (i.e. for all $L\in\V'$ there exists a unique $u\in\U$ satisfying (\ref{eq:3.1})) if and only if (\ref{eq:3.5}) and  (\ref{eq:3.6}) are satisfied. In fact, Hadamard's definition of well-posedness also requires continuity of the inverse operator, which here  automatically follows from bijectivity by the Inverse Mapping Theorem.

In order to obtain a result of convergence of the approximate solutions we consider the following \emph{uniform Banach-Ne\v cas-Babu\v ska condition} (called \emph{Ladyzenskaia-Babu\v ska-Brezzi condition} in the framework of the mixed formulations,  i.e. approximation of saddle point problems, see also Section~\ref{sec:8}), which is the estimate (\ref{eq:3.5}) for 
$a_{|\U_n\times\V_n }$ uniformly in $n\in\N$, namely
\[(BNB)\quad \exists \beta>0; \forall n\in\N,~\forall u\in\U_n,~\sup_{v\in\V_n,\|v\|_\V=1}|a(u,v)|\geq \beta \|u\|_{\U }.        \]
 
\begin{rem}
	Condition (BNB) is also called the \emph{inf--sup condition} since by the Hahn-Banach Theorem it can be reformulated as 
	
\[ \exists \beta>0; \forall n\in\N, ~\inf_{u\in\U_n,\|u|_V=1} ~\sup_{v\in\V_n,\|v\|_\V=1}|a(u,v)| \ge \beta .        \]

More precisely, this is the \emph{uniform} or \emph{discrete }BNB-condition which is used for approximation whereas (\ref{eq:3.5}) is the \emph{continuous} BNB-condition which expresses well-posedness of the problem and can also be expressed by an inf-sup-condition  (see for example \cite[Lemma 6.95 and Lemma 6.110]{hackbusch}). The use of (LBB) relates this inequality to the work of Ladyzhenkaya \cite{ladyzhenskaya} who, after a previous contribution due to Babuska \cite{babuska}, used it to prove well-posedness. Brezzi \cite{brezzi} introduced the analogue of the uniform BNB-condition for the treatment of saddle point problems (see Section~\ref{sec:8} for more details).
	
Usually, in the numerical analysis community, one uses the name ``inf-sup'' condition (or LBB condition) only in the context of saddle point problems (see	condition \eqref{eq:infsupwzp}.$(iii)$ in Section~\ref{sec:8}). We keep the name ``(BNB) condition'',  following the monograph \cite{EG04}.
\end{rem}		
 We recall that (BNB) implies that the approximate solutions converge to the solution if the problem is well-posed (see for example \cite{EG04,XZ03, AU19}). Here we will show that (BNB) is actually equivalent to Galerkin-convergence, and surprisingly also to Galerkin-convergence   for the dual problem.
\begin{defn}[Convergence of Galerkin approximation]\label{def:3.1}
	We say that \emph{the Galerkin-approximation converges} if (\ref{eq:3.1}) as well as (\ref{eq:3.2}) are well-posed for all $n\in\N$ and $L\in \V'$ and if, in addition, there exists a constant $\gamma>0$ independent of $n$ and $L$ such that, 
	\begin{equation}\label{eq:3.7}   
	\|u-u_n\|_{\U}\leq \gamma \dist(u,\U_n),
	\end{equation} 
	where $u$ is the solution of (\ref{eq:3.1}) and $u_n$ the solution of (\ref{eq:3.2}) for $n\in\N$ and $L\in \V'$. In particular, $\lim_{n\to \infty} u_n=u$ in $\U$.  
\end{defn} 
 We may also consider the dual problem of (\ref{eq:3.1}) where $a$ is replaced by the \emph{adjoint form} $a^*:\V\times\U\to \K$ given by 
 \[a^*(v,u)=\overline{a(u,v)}\quad(u\in\U,v\in\V).\]
   If in Definition~\ref{def:3.1} the form $a$ is replaced by $a^*$, then we say that \emph{the dual Galerkin approximation converges}.  Similarly we note the following dual \emph{uniform Banach-Ne\v cas-Babu\v ska  condition}
\[(BNB^*)\quad \exists \beta^*>0; \forall n\in\N, \sup_{u\in\U_n,\|u\|_\U=1}|a^*(u,v)|\geq \beta^* \|v\|_{\V }\quad (v\in\V_n). \]
Then the following theorem holds. 
\begin{thm}\label{th:3.2}
	The following assertions are equivalent:
	\begin{enumerate}
	\item[(i)] the Galerkin approximation converges;
	\item[(ii)] $(BNB)$ holds;
	\item[(iii)] $(BNB^*)$ holds;
	\item[(iv)] the dual Galerkin approximation converges.  
	\end{enumerate}	  
\end{thm}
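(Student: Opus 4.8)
The plan is to prove the cycle of implications $(ii)\Rightarrow(i)\Rightarrow(iii)$ and, by the symmetry of the hypotheses, $(iii)\Rightarrow(iv)\Rightarrow(ii)$; since $a^{**}=a$ and the roles of $\U,\V$ (and of $\U_n,\V_n$) are interchangeable when passing to the adjoint, it suffices to establish $(ii)\Rightarrow(i)$ and $(i)\Rightarrow(iii)$, the other two implications following verbatim with $a$ replaced by $a^*$. The implication $(ii)\Rightarrow(i)$ is essentially the classical Petrov--Galerkin argument recalled in the introduction (\cite[Theorem~2.24]{EG04}): assuming $(BNB)$, for each $n$ the map $\A_n:\U_n\to\V_n'$ associated with $a_{|\U_n\times\V_n}$ is injective, hence bijective by the equality of dimensions, so $(\ref{eq:3.2})$ is well-posed; and the uniform lower bound $\beta$ together with continuity of $a$ yields $\|u-u_n\|_\U\le (1+M/\beta)\dist(u,\U_n)$ by writing, for arbitrary $\chi\in\U_n$, $\beta\|u_n-\chi\|_\U\le \sup_{\|v\|=1,v\in\V_n}|a(u_n-\chi,v)| = \sup|a(u-\chi,v)|\le M\|u-\chi\|_\U$, using the Galerkin orthogonality $a(u-u_n,v)=0$ for $v\in\V_n$. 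One also needs well-posedness of $(\ref{eq:3.1})$ itself; here I would invoke that $(BNB)$ passing to the limit gives the continuous estimate $(\ref{eq:3.5})$, and separately argue $(\ref{eq:3.6})$ — this requires a small additional observation, see below.

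The substantive new content is $(i)\Rightarrow(iii)$: convergence of the primal Galerkin approximation forces the \emph{dual} discrete inf--sup condition. Here is the approach I would take. Suppose $(BNB^*)$ fails. Then there is a sequence $n_k\to\infty$ and unit vectors $v_k\in\V_{n_k}$ with $\sup_{u\in\U_{n_k},\|u\|_\U=1}|a^*(u,v_k)|=\sup_{u\in\U_{n_k},\|u\|_\U=1}|a(u,v_k)|\to 0$, i.e. $a(u,v_k)$ is uniformly small on the unit ball of $\U_{n_k}$. I want to manufacture, for infinitely many $k$, right-hand sides $L\in\V'$ for which the discrete problem $(\ref{eq:3.2})$ is either not solvable or whose solution is large, contradicting the uniform estimate $(\ref{eq:3.7})$ (or even the mere well-posedness of $(\ref{eq:3.2})$ for all $n$). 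Concretely: consider a fixed $L_0\in\V'$ with $\langle L_0,v_k\rangle$ not tending to $0$ along a subsequence — such $L_0$ exists since the $v_k$ are norm-one; by Hahn--Banach pick $\varphi_k\in\V'$, $\|\varphi_k\|=1$, $\langle\varphi_k,v_k\rangle=1$, and use a gliding-hump / diagonal argument to build a single $L\in\V'$ with $\limsup_k|\langle L,v_k\rangle|>0$ while keeping control on the subspaces. For such $L$, on $\V_{n_k}$ the functional $L_{|\V_{n_k}}$ has a nontrivial component ``in the direction $v_k$'' which the form $a_{|\U_{n_k}\times\V_{n_k}}$ cannot reach efficiently: writing the solution $u_{n_k}$ of $(\ref{eq:3.2})$, testing against $v_k$ gives $a(u_{n_k},v_k)=\langle L,v_k\rangle$, whence $\|u_{n_k}\|_\U \ge |\langle L,v_k\rangle| / \sup_{\|u\|_\U=1,u\in\U_{n_k}}|a(u,v_k)| \to\infty$. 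But $(\ref{eq:3.7})$ bounds $\|u_{n_k}\|_\U\le \|u\|_\U+\gamma\dist(u,\U_{n_k})\le (1+\gamma)\|u\|_\U$, a fixed bound — contradiction. (If the discrete problem is not even solvable for some $n$, there is nothing to prove: convergence of the Galerkin approximation already presupposes solvability for all $n$.)

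The main obstacle I anticipate is the construction of the single functional $L$ that sees the ``bad directions'' $v_k$ simultaneously: the naive choice of an individual $\varphi_k$ per index must be assembled into one element of $\V'$ without the contributions cancelling, and one must ensure $\langle L,v_k\rangle\not\to 0$ along a subsequence while the corresponding denominators $\sup_{u\in\U_{n_k},\|u\|=1}|a(u,v_k)|$ go to zero — a classic summability/gliding-hump estimate that needs the failure of $(BNB^*)$ quantitatively (pass to a subsequence along which the sup is $\le 2^{-k}$). A clean alternative, which I would present if the hump argument gets technical, is a Banach--Steinhaus / open-mapping packaging: consider the solution operators $S_n:\V'\to\U_n\subseteq\U$, $L\mapsto u_n$, which under $(i)$ satisfy $\sup_n\|S_n\|<\infty$ (this is exactly the content of $(\ref{eq:3.7})$ with $u=S L$ and $\|S\|<\infty$ from well-posedness of $(\ref{eq:3.1})$); then for each $n$, $\|S_n\|_{\LX}\ge \|S_n L\|_\U/\|L\|_{\V'}$, and choosing $L=\varphi_k$ (the norming functional of $v_k$) forces $\|S_{n_k}\varphi_k\|_\U\to\infty$ as computed above, contradicting the uniform bound. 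This reduces the whole implication to the elementary inequality $\|u_n\|_\U\ge|\langle L,v_k\rangle|/\sup_{u\in\U_{n_k},\|u\|_\U=1}|a(u,v_k)|$ plus uniform boundedness of $\{S_n\}$, and I would organize the write-up around that, treating the remaining implications $(iii)\Rightarrow(iv)\Rightarrow(ii)$ as the dual restatement and the classical $(ii)\Rightarrow(i)$ as recalled above (with the one extra line needed for $(\ref{eq:3.6})$: if some $v\ne0$ satisfied $a(u,v)=0$ for all $u\in\U$, an approximation of $v$ by $\V_n$ together with $(BNB^*)$ — which by the equivalence we are proving, or by a direct limiting argument from $(BNB)$ via finite dimensions — would be contradicted).
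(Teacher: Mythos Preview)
Your approach is correct and matches the paper's structure: the cycle $(ii)\Rightarrow(i)\Rightarrow(iii)$ plus adjoint symmetry is exactly how the paper proceeds, and your ``clean alternative'' for $(i)\Rightarrow(iii)$ is the contrapositive of the paper's Proposition~\ref{prop:3.4}. The paper phrases it as a Banach--Steinhaus argument on the set $\{v/\|v\|_{\V_n}:v\in\V_n\setminus\{0\}\}$ (where $\|v\|_{\V_n}:=\sup_{u\in\U_n,\|u\|=1}|a(u,v)|$), showing it is weakly bounded because $|\langle L,v/\|v\|_{\V_n}\rangle|=|a(u_n,v)|/\|v\|_{\V_n}\le\|u_n\|_\U$; you instead use the uniform operator bound $\sup_n\|S_n\|<\infty$ (which Definition~\ref{def:3.1} gives directly) and test against a norming functional $\varphi_k$ for the bad direction $v_k$. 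These are the same computation; the paper's version assumes only \emph{pointwise} boundedness of the $u_n$ and therefore needs the uniform boundedness principle, yielding a slightly stronger intermediate statement, but for the theorem itself your shortcut is fine. The gliding-hump construction is unnecessary and you are right to drop it.

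The one place where the paper is cleaner is the well-posedness of~(\ref{eq:3.1}) inside $(ii)\Rightarrow(i)$. Your plan to verify~(\ref{eq:3.5}) and~(\ref{eq:3.6}) separately leaves~(\ref{eq:3.6}) hanging: invoking $(BNB^*)$ ``by the equivalence we are proving'' is circular as stated, and the ``direct limiting argument'' is not spelled out (though it can be made to work: the finite-dimensional duality $\|A_n^{-1}\|=\|(A_n')^{-1}\|$ gives $(BNB)\Leftrightarrow(BNB^*)$ with the same constant, cf.\ Proposition~\ref{prop:eqbetabetastar}). The paper avoids this entirely in Proposition~\ref{prop:3.3}: from $(BNB)$ one has $\|u_n\|_\U\le\beta^{-1}\|L\|_{\V'}$, so by reflexivity a subsequence converges weakly to some $u\in\U$, and passing to the limit in $a(u_{n_k},v_k)=\langle L,v_k\rangle$ with $v_k\in\V_{n_k}$, $v_k\to v$, shows that $u$ solves~(\ref{eq:3.1}); uniqueness then follows from the estimate~(\ref{eq:3.7}) itself. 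This is more direct than checking~(\ref{eq:3.5}) and~(\ref{eq:3.6}).
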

It is surprising that $(BNB)$ and $(BNB^*)$ are equivalent even though the corresponding condition (\ref{eq:3.5}) is obviously not equivalent to its dual form. In fact, it can well happen that $\A$ is injective and has closed range (so that there exists $\beta>0$ satisfying (\ref{eq:3.5})) but the range of $\A$ is a proper subspace of $\V'$ so that there exists $v\in\V$ such that $v\neq 0$ and $a(u,v)=0$ for all $u\in\U$; in particular the dual form of (\ref{eq:3.5}) does not hold for any $\beta^*>0$. 

We will give the proof of Theorem~\ref{th:3.2} in several steps which give partly even stronger results. At first we show that $(ii)$ implies $(i)$, where $\gamma$ can even be expressed in terms of $\beta$ and $M$. Although the proof of this result is classical (see for example \cite{XZ03,EG04}), we provide it for the convenience of the reader, but also to establish the well-posedness of (\ref{eq:3.1}) which we did not assume. This will be important for the proof of Theorem~\ref{th:3.2} and for the main result in Section~\ref{sec:4}. 
\begin{prop}\label{prop:3.3}
	Let $\beta>0$. Assume that for all $n\in\N$, 
	\begin{equation}\label{eq:bnb}
	\sup_{v\in\V_n,\|v\|_\V=1}|a(u,v)|\geq \beta \|u\|_{\U }\quad (u\in\U_n). 
	\end{equation}
	 Then the Galerkin-approximation converges and (\ref{eq:3.7}) holds with \[\gamma=1+\frac{M}{\beta}.\]  
\end{prop}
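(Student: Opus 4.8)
The plan is to break the proof into three parts: unique solvability of the discrete problems (\ref{eq:3.2}), well-posedness of the continuous problem (\ref{eq:3.1}), and the quasi-optimality estimate (\ref{eq:3.7}). First I would treat the discrete problems. Because $\dim\U_n=\dim\V_n$ is finite, it suffices by (\ref{eq:3.3}) to check that the only $u\in\U_n$ with $a(u,\chi)=0$ for all $\chi\in\V_n$ is $u=0$; but this is immediate from (\ref{eq:bnb}), which gives $\beta\|u\|_\U\leq\sup_{v\in\V_n,\|v\|_\V=1}|a(u,v)|=0$. Hence $u_n$ is well defined for all $n\in\N$ and all $L\in\V'$, and feeding $u=u_n$ together with (\ref{eq:3.2}) into (\ref{eq:bnb}) yields the uniform a priori bound $\beta\|u_n\|_\U\leq\|L\|_{\V'}$.

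Next I would show that (\ref{eq:3.1}) is well-posed, which by the discussion preceding the statement reduces to proving that the operator $\A$ is bijective. For surjectivity, fix $L\in\V'$: by reflexivity of $\U$ and the uniform bound above, some subsequence $(u_{n_k})$ converges weakly to a limit $u\in\U$. Given $v\in\V$, I would pick $\chi_{n_k}\in\V_{n_k}$ with $\chi_{n_k}\to v$ (possible since $(\V_n)$ is an approximating sequence), write $\langle L,\chi_{n_k}\rangle=a(u_{n_k},\chi_{n_k})=a(u_{n_k},v)+a(u_{n_k},\chi_{n_k}-v)$, and pass to the limit: the left-hand side tends to $\langle L,v\rangle$, the term $a(u_{n_k},v)$ tends to $a(u,v)$ because $a(\cdot,v)\in\U'$, and $a(u_{n_k},\chi_{n_k}-v)\to 0$ by continuity of $a$ and the uniform bound, whence $a(u,v)=\langle L,v\rangle$. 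For injectivity, if $a(u,v)=0$ for all $v\in\V$, I would choose $w_n\in\U_n$ with $w_n\to u$; then (\ref{eq:bnb}) gives $\beta\|w_n\|_\U\leq\sup_{v\in\V_n,\|v\|_\V=1}|a(w_n-u,v)|\leq M\|w_n-u\|_\U\to 0$, forcing $u=0$. Thus $\A$ is bijective and (\ref{eq:3.1}) is well-posed.

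Finally I would derive (\ref{eq:3.7}) from Galerkin orthogonality. With $u$ the solution of (\ref{eq:3.1}), subtracting (\ref{eq:3.2}) gives $a(u-u_n,\chi)=0$ for all $\chi\in\V_n$, so $a(u_n-w_n,\cdot)=a(u-w_n,\cdot)$ on $\V_n$ for every $w_n\in\U_n$; since $u_n-w_n\in\U_n$, applying (\ref{eq:bnb}) yields
\[\beta\|u_n-w_n\|_\U\leq\sup_{v\in\V_n,\|v\|_\V=1}|a(u-w_n,v)|\leq M\|u-w_n\|_\U,\]
and then the triangle inequality gives $\|u-u_n\|_\U\leq(1+M/\beta)\|u-w_n\|_\U$. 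Taking the infimum over $w_n\in\U_n$ yields (\ref{eq:3.7}) with $\gamma=1+M/\beta$; and since $(\U_n)$ is an approximating sequence, $\dist(u,\U_n)\to 0$, hence $u_n\to u$ in $\U$.

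I expect the only genuinely non-routine point to be the well-posedness of (\ref{eq:3.1}), and within it the surjectivity of $\A$: the classical C\'ea estimate simply assumes well-posedness, whereas here it must be extracted from the discrete condition (\ref{eq:bnb}), and this is precisely where reflexivity of $\U$ (to produce a weak limit of the bounded sequence $(u_n)$) and the approximating property of $(\V_n)$ (to test the limit equation against an arbitrary $v\in\V$) come into play. The rest is a short manipulation of (\ref{eq:bnb}), the continuity bound $M$, and the triangle inequality.
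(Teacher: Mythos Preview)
Your proof is correct and follows essentially the same route as the paper: unique solvability of the discrete problems from (\ref{eq:bnb}), the uniform bound $\|u_n\|_\U\leq\beta^{-1}\|L\|_{\V'}$, extraction of a weakly convergent subsequence by reflexivity to produce a solution of (\ref{eq:3.1}), and the C\'ea-type estimate via Galerkin orthogonality. The only minor difference is that you establish injectivity of $\A$ directly (approximating $u$ by $w_n\in\U_n$ and applying (\ref{eq:bnb})), whereas the paper first proves (\ref{eq:3.7}) for any solution $u$ and then reads off uniqueness from $u_n\to u$; both arguments are short and equivalent in spirit.
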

\begin{proof}
Let $L\in \V'$. Note that $(\ref{eq:bnb})$ implies (\ref{eq:3.3}). Thus, for each $n\in\N$ there exists a unique solution $u_n$ of (\ref{eq:3.2}).   By $(\ref{eq:bnb})$, 
\begin{equation}\label{eq:3.8prev}  
\|u_n\|_{\U}\leq \frac{1}{\beta} \sup_{  v\in\V_n,\|v\|_\V\leq 1} |\langle L,v\rangle |\leq \frac{1}{\beta} \|L\|_{\V'}.  
\end{equation}
Since $\U$ is reflexive, we find $u\in \U$ such that a subsequence of $(u_n)_n$, say, $(u_{n_k})_{k}$, converges weakly  to $u$. Let $v\in\V$. By assumption we find $v_k\in \V_{n_k}$ such that $\lim_{k\to\infty}\|v-v_{n_k}\|_\V=0$. It follows that 
\[ a(u,v) =\lim_{k\to\infty} a(u_{n_k}, v_k)=\lim_{k\to\infty}  \langle L,v_k\rangle=\langle L,v\rangle. \]
Thus we find a solution $u$ of (\ref{eq:3.1}). But so far we do not know its uniqueness. This will be a consequence  of $(\ref{eq:3.7})$ which we prove now.  Indeed, observe that  
\begin{equation}\label{eq:3.8}
a(u,\chi)=\langle L,\chi\rangle=a(u_n,\chi)\mbox{ for all }\chi\in \V_n.
\end{equation}
It follows that 
$ a(u,\chi)=a(u_n,\chi)\mbox{ for all }\chi\in\V_n$ (\emph{Galerkin orthogonality}).
Using this, for all   $w\in\U_n$,
\begin{eqnarray*}
\|u-u_n\|_\U & \leq & \|u-w\|_\U + \|w -u_n\|_{\U}\\
   & \leq & \|u-w\|_\U + \frac{1}{\beta} \sup_{v\in\V_n,\|v\|_\V=1}|a(w-u_n,v)|\\
    & = & \|u-w\|_\U + \frac{1}{\beta} \sup_{v\in\V_n,\|v\|_\V=1}|a(w-u,v)|\\
   & \leq & \left( 1+\frac{M}{\beta} \right)\|w -u\|_\U	.
\end{eqnarray*}	
Taking the infimum over all $w\in \U_n$ we obtain (\ref{eq:3.7}). In particular $\lim_{n\to \infty}\|u-u_n\|_\U =0$ which shows uniqueness. 
\end{proof}	

The following result is due to Xu and Zikatanov   \cite[Theorem~2]{XZ03} (see also \cite[Satz 9.41]{AU19}). We nevertheless provide its proof for the sake of completeness.
\begin{prop}\label{prop:3.3KatoArendt}
	Assume that $\U$ is a Hilbert space and that $\beta>0$ is such that  (\ref{eq:bnb}) holds.  
	Then the Galerkin-approximation converges and (\ref{eq:3.7}) holds with $\gamma=\frac{M}{\beta}$.  
\end{prop}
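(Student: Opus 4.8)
The plan is to sharpen the constant in Proposition~\ref{prop:3.3} from $1 + M/\beta$ to $M/\beta$ by exploiting the Hilbert space structure of $\U$, specifically the orthogonal projection onto $\U_n$. The key algebraic identity underlying the Xu--Zikatanov argument is that for a bounded linear operator $T$ on a Hilbert space which is a (non-orthogonal) projection onto a closed subspace, one has $\|T\| = \|I - T\|$ whenever $T \neq 0$ and $T \neq I$; more precisely, if $P_n$ denotes the orthogonal projection of $\U$ onto $\U_n$ and $G_n : \U \to \U_n$ is the \emph{Galerkin projection} defined by sending the exact solution $u$ of \eqref{eq:3.1} to its approximation $u_n$ (which is well-defined and bounded by \eqref{eq:bnb}, exactly as in the proof of Proposition~\ref{prop:3.3}), then the Galerkin orthogonality relation \eqref{eq:3.8} says $G_n$ restricted to $\U_n$ is the identity, so $G_n$ is an idempotent with range $\U_n$.

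First I would establish, as in Proposition~\ref{prop:3.3}, that \eqref{eq:bnb} guarantees unique solvability of \eqref{eq:3.2}, that the a priori bound \eqref{eq:3.8prev} holds, and that \eqref{eq:3.1} is well-posed by the weak compactness argument (this part is unchanged and may simply be cited from the previous proof). Next I would record that $G_n$ is a bounded linear idempotent: linearity and $G_n|_{\U_n} = \Id$ are immediate from the definition and Galerkin orthogonality, and $\|G_n u\|_\U = \|u_n\|_\U \le \tfrac{1}{\beta}\sup_{\|v\|_\V \le 1}|a(u,v)| \le \tfrac{M}{\beta}\|u\|_\U$, so $\|G_n\| \le M/\beta$. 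Then $u - u_n = u - G_n u = (\Id - G_n)u$, and since $\Id - G_n$ annihilates $\U_n$ we get, for every $w \in \U_n$,
\[
\|u - u_n\|_\U = \|(\Id - G_n)(u - w)\|_\U \le \|\Id - G_n\|\,\|u - w\|_\U,
\]
and taking the infimum over $w \in \U_n$ yields $\|u - u_n\|_\U \le \|\Id - G_n\|\,\dist(u, \U_n)$.

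It then remains to invoke the Hilbert space projection lemma: a bounded idempotent $P$ on a Hilbert space with $0 \neq P \neq \Id$ satisfies $\|\Id - P\| = \|P\|$. I would cite this (it is the Kato--type identity referred to in the proposition's statement, attributable to Kato and rediscovered several times; a clean reference is Xu--Zikatanov \cite{XZ03}) or include the two-line proof via $\|P\|^2 = \sup_{x \perp \ker P} \|Px\|^2/\|x\|^2$ and the corresponding formula for $\Id - P$, which coincide because $\ker(\Id - P) = \operatorname{ran} P$ and $\ker P = \operatorname{ran}(\Id - P)$ are in ``general position.'' Applying this with $P = G_n$ gives $\|\Id - G_n\| = \|G_n\| \le M/\beta$, hence \eqref{eq:3.7} with $\gamma = M/\beta$. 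One should check the degenerate cases $G_n = 0$ (forces $\U_n = \{0\}$, excluded by hypothesis) and $G_n = \Id$ (forces $\U = \U_n$, impossible since $\U$ is infinite dimensional), so the identity genuinely applies.

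I expect the main obstacle, or at least the only non-bookkeeping point, to be the justification of the projection identity $\|\Id - G_n\| = \|G_n\|$ — it is the one place where the Hilbert space hypothesis on $\U$ is essential and where the improvement over Proposition~\ref{prop:3.3} comes from. Everything else is a transcription of the previous proof with ``$\|u - w\|_\U + \tfrac{1}{\beta}\sup|a(w - u_n, v)|$'' replaced by the cleaner operator-theoretic estimate, so care is needed only to present the idempotent $G_n$ correctly and to confirm its range is exactly $\U_n$ (surjectivity onto $\U_n$ holds because any $\chi \in \U_n$ is the Galerkin solution for the data $L = \A\chi|_{\V}$, or simply because an idempotent maps onto its fixed-point set which here contains all of $\U_n$).
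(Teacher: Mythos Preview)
Your proposal is correct and follows essentially the same route as the paper: define the Ritz/Galerkin projection $G_n$ (the paper calls it $Q_n$), bound $\|G_n\|\le M/\beta$ via (\ref{eq:bnb}) and Galerkin orthogonality, invoke Kato's identity $\|\Id-G_n\|=\|G_n\|$ for nontrivial Hilbert-space idempotents, and conclude. The only cosmetic difference is that the paper cites Kato \cite{kato1060est} directly for the projection identity while you point to Xu--Zikatanov, and that your intermediate bound on $\|G_n u\|$ tacitly uses Galerkin orthogonality inside the supremum (the paper spells this out explicitly).
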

\begin{proof}
Note that 
(\ref{eq:bnb}) implies (\ref{eq:3.3}). Consequently for each $w\in\U$ there exists a unique $Q_nw\in\U_n$ such that 
\[    a(Q_n w,\chi)=a(w,\chi)\quad (\chi\in\V_n).\]
Then $Q_n$ is a projection from $\U$ onto $\U_n$, which is calle the \emph{Ritz projection.} Moreover, 
\begin{eqnarray*}
 \beta \|  Q_n w\|_{\U} & \leq & \sup_{\chi\in\V_n,\|\chi\|_\V=1} |a(Q_nw,\chi)|\\
  & = & \sup_{\chi\in\V_n,\|\chi\|_\V=1} |a(w,\chi)|\\
  & \leq & M\|w\|_{\U}.	
\end{eqnarray*} 	
Thus $\|Q_n\|\leq \frac{M}{\beta}$. 

Since $U_n\neq 0$ and $U\neq 0$, one has $Q_n\neq 0,\Id$. It follows from a result due to Kato \cite[Lemma 4]{kato1060est} that   $\Vert Q_n\Vert =\Vert {\rm Id} - Q_n\Vert$.

Now let $L\in\V'$ and $u$ the solution of (\ref{eq:3.3}), $u_n$ the solution of (\ref{eq:3.2}). Then for any $\chi\in\U_n$, 
\[u-u_n=(\Id -Q_n)u=(\Id -Q_n)(u-\chi).\]
Hence 
\[\|u-u_n\|_{\U} \leq \|\Id -Q_n\|\|u-\chi\|_\U=\|Q_n\| \|u-\chi\|_{\U}\leq \frac{M}{\beta}\|u-\chi\|_{\U}.\]
This implies that 
\[  \|u-u_n\|_{\U}\leq \frac{M}{\beta}\dist (u,\U_n).  \]
\end{proof}

\begin{rem}
	Also in certain Banach spaces an improvement of the constant $1+\frac{M}{\beta}$ is possible, see Stern \cite{S15}.
\end{rem}

Next we show that even a weaker assumption than the convergence of the Galerkin-approximation implies $(BNB^*)$. 
\begin{prop}\label{prop:3.4}
Assume (\ref{eq:3.3}) for all $n\in\N$ and  that \[\sup_{n\in\N}\|u_n\|_\U<\infty\] whenever $L\in\V'$ and $u_n$ is the solution of (\ref{eq:3.2}). Then $(BNB^*)$ holds. 
\end{prop}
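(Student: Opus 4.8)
The plan is to argue by contradiction using a uniform boundedness (Banach--Steinhaus) argument, exploiting the hypothesis that the Galerkin solutions $u_n$ stay bounded for every fixed right-hand side $L$. Concretely, I would first observe that $(\ref{eq:3.3})$ guarantees that for each $n$ and each $L\in\V'$ the finite-dimensional problem $(\ref{eq:3.2})$ has a unique solution $u_n=u_n(L)$, and that $L\mapsto u_n(L)$ is linear from $\V'$ into $\U_n\subset\U$. The hypothesis $\sup_n\|u_n(L)\|_\U<\infty$ for every $L$ is exactly the pointwise-boundedness input required by the uniform boundedness principle, so it yields a constant $C>0$ with $\|u_n(L)\|_\U\le C\|L\|_{\V'}$ for all $n\in\N$ and all $L\in\V'$.

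Next I would convert this uniform bound into $(BNB^*)$ by a duality computation. Fix $n\in\N$ and $v\in\V_n$ with $\|v\|_\V=1$; the goal is to bound $\|v\|_\V$ by $\beta^{*-1}\sup_{u\in\U_n,\|u\|_\U=1}|a^*(u,v)|=\beta^{*-1}\sup_{u\in\U_n,\|u\|_\U=1}|a(u,v)|$. Choose $L\in\V'$ with $\|L\|_{\V'}=1$ and $\langle L,v\rangle$ close to $\|v\|_\V=1$ (Hahn--Banach); let $u_n=u_n(L)\in\U_n$ be the corresponding Galerkin solution, so $\|u_n\|_\U\le C$. Since $v\in\V_n$ is an admissible test function, $a(u_n,v)=\langle L,v\rangle$, hence
\[
|\langle L,v\rangle|=|a(u_n,v)|\le \|u_n\|_\U\cdot\sup_{u\in\U_n,\|u\|_\U=1}|a(u,v)|\le C\sup_{u\in\U_n,\|u\|_\U=1}|a(u,v)|.
\]
Letting the choice of $L$ approach the optimum gives $1=\|v\|_\V\le C\sup_{u\in\U_n,\|u\|_\U=1}|a^*(u,v)|$, which is $(BNB^*)$ with $\beta^*=1/C$; by homogeneity this extends from unit vectors to all $v\in\V_n$.

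The only delicate point is the application of Banach--Steinhaus: I must make sure the family is a family of bounded operators on a Banach space. This is fine here since each $T_n:L\mapsto u_n(L)$ is linear on the Banach space $\V'$ with values in the Banach space $\U$, and it is automatically bounded because its range $\U_n$ is finite-dimensional (equivalently, $T_n=\A_n^{-1}\circ R_n$ where $R_n:\V'\to\V_n'$ is restriction and $\A_n:\U_n\to\V_n'$ is the continuous, hence bicontinuous, isomorphism induced by $a$). So $\sup_n\|T_n\|<\infty$ follows, and the rest is the elementary duality estimate above. I expect no real obstacle; the argument is short once one recognizes that the stated hypothesis is precisely a pointwise-boundedness assumption.
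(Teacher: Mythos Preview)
Your proof is correct. Both your argument and the paper's hinge on the Banach--Steinhaus theorem and on the identity $\langle L,v\rangle=a(u_n,v)$ for $v\in\V_n$, but they apply uniform boundedness in dual ways. The paper introduces the auxiliary norm $\|v\|_{\V_n}:=\sup_{u\in\U_n,\|u\|_\U=1}|a(u,v)|$ on $\V_n$ and shows that the set $\{v/\|v\|_{\V_n}:n\in\N,\ v\in\V_n\setminus\{0\}\}\subset\V$ is weakly bounded (each fixed $L$ gives a bound via $|\langle L,v/\|v\|_{\V_n}\rangle|\le\|u_n\|_\U$), hence norm bounded. You instead apply Banach--Steinhaus directly to the solution operators $T_n:\V'\to\U$, obtain the uniform operator bound first, and then recover $(BNB^*)$ by a Hahn--Banach choice of $L$ for a given $v\in\V_n$. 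Your route is perhaps a bit more streamlined since it avoids the auxiliary norm and the bounded-set detour; the paper's route makes the underlying weak-boundedness picture in $\V$ more explicit. Either way the content is the same: the pointwise boundedness of the $u_n$'s over all $L$ is precisely a uniform-boundedness hypothesis, and the conclusion is the dual inf--sup estimate.
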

\begin{proof}
	Since the spaces $\V_n$ and $\U_n$ have the same finite dimension, our assumption (\ref{eq:3.3}) implies also dual uniqueness, i.e. $a(\chi,v)=0$ for all $\chi\in \U_n$ implies $v=0$ whenever $v\in \V_n$, and this for all $n\in\N$. Thus 
	\[ \|v\|_{\V_n}:=\sup_{u\in\U_n,\|u\|_\U=1}|a(u,v)| \]
	defines a norm on $\V_n$.  Moreover, 
	\[ |a(u,v)|\leq \|u\|_\U \|v\|_{\V_n} \mbox{ for all }u\in\U_n,v\in\V_n.    \]
	We show that the set 
	  \[  {\mathcal B}:=\left\{  \frac{v}{\|v\|_{\V_n}}:n\in\N,v\in\V_n,v\neq 0      \right\}   \]
	  is bounded. For that purpose, let $L\in\V'$. By assumption there exist $c>0$ and $u_n\in\U_n$ such that 
	  \[a(u_n,v)=\langle L,v\rangle \mbox{ for all }v\in \V_n\]
	  and $\|u_n\|_\U\leq c$ for all $n\in\N$. Now, for $\frac{v}{\|v\|_{\V_n}}\in {\mathcal B}$, 
	  \[  \left|\langle L,\frac{v}{\|v\|_{\V_n}}\rangle \right|=|a(u_n,v)|\frac{1}{\|v\|_{\V_n}}\leq \|u_n\|_\U\leq c.\] 
	  This shows that $\mathcal B$ is weakly bounded and thus, owing to the Banach--Steinhaus theorem, norm-bounded. Therefore there exists $\beta^*>0$ such that
	  $ \|v\|_\V\leq \frac{1}{\beta^*}\|v\|_{\V_n}$, i.e. 
\[  \beta^* \|v\|_\V\leq \sup_{u\in\U_n,\|u\|_\U=1}|a(u,v)| \mbox{ for all }v\in\V_n,n\in\N.    \]
This is $(BNB^*)$. 
\end{proof}	
\begin{proof}[Proof of Theorem~\ref{th:3.2}]
$(ii)\Rightarrow (i)$ and $(iii)\Rightarrow (iv)$ via Proposition~\ref{prop:3.3}, whereas 
$(i)\Rightarrow (iii)$  and $(iv)\Rightarrow (ii)$ follows from Proposition~\ref{prop:3.4}.\\
 \end{proof}
\textbf{Remark:} The hypothesis on $\U$ and $\V$ to be  reflexive is not needed in Proposition~\ref{prop:3.3}.\\

Finally we mention that the best lower bounds $\beta$ for $(BNB)$ and $\beta^*$ for $(BNB^*)$ are the same if $\U$ and $\V$ are Hilbert spaces. 
\begin{prop} \label{prop:eqbetabetastar}  Assuming that $\U$ and $\V$ are Hilbert spaces, let $\beta>0$. Then the two conditions $(\ref{eq:11})$ and $(\ref{eq:12})$ are equivalent:
\begin{equation}\label{eq:11}
\sup_{  \|v\|_\V\leq 1, v\in\V_n} |a(u,v)|\geq \beta \|u\|_\U\mbox{ for all }u\in\U_n\mbox{ and for all }n\in\N;
\end{equation} 	
\begin{equation}\label{eq:12}
\sup_{  \|u\|_\U\leq 1, u\in\U_n} |a(u,v)|\geq \beta \|u\|_\U\mbox{ for all }v\in\V_n\mbox{ and for all }n\in\N;
\end{equation} 	
\end{prop}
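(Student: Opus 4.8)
The plan is to reduce both inequalities, for each fixed $n$, to a lower bound on a single linear operator between the finite-dimensional Hilbert spaces $\U_n$ and $\V_n$, and then to invoke the fact that a Hilbert-space adjoint preserves operator norms, together with the equality $\dim\U_n=\dim\V_n<\infty$.

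First I would fix $n\in\N$ and regard $\U_n,\V_n$ as Hilbert spaces with the inner products inherited from $\U,\V$. By the Riesz representation theorem in $\V_n$, each $u\in\U_n$ determines a unique $A_n u\in\V_n$ with $a(u,v)=\langle A_n u,v\rangle_\V$ for all $v\in\V_n$, which defines a linear map $A_n:\U_n\to\V_n$. Choosing $v=A_n u/\|A_n u\|_\V$ (when $A_n u\neq 0$) gives
\[\sup_{v\in\V_n,\,\|v\|_\V\le 1}|a(u,v)|=\|A_n u\|_\V\qquad(u\in\U_n),\]
so that $(\ref{eq:11})$ restricted to this $n$ is exactly the statement that $A_n$ is bounded below by $\beta$. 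Writing $A_n^*:\V_n\to\U_n$ for the Hilbert-space adjoint of $A_n$, so that $\langle A_n u,v\rangle_\V=\langle u,A_n^* v\rangle_\U$, the same computation yields
\[\sup_{u\in\U_n,\,\|u\|_\U\le 1}|a(u,v)|=\|A_n^* v\|_\U\qquad(v\in\V_n),\]
so $(\ref{eq:12})$ restricted to this $n$ says that $A_n^*$ is bounded below by $\beta$ (reading the right-hand side of $(\ref{eq:12})$ as $\beta\|v\|_\V$).

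It then remains to check that $A_n$ is bounded below by $\beta$ if and only if $A_n^*$ is. Since $\dim\U_n=\dim\V_n<\infty$, being bounded below by $\beta$ is equivalent to being invertible with inverse of norm at most $1/\beta$, for both $A_n$ and $A_n^*$; moreover $A_n$ is invertible if and only if $A_n^*$ is, and in that case $(A_n^*)^{-1}=(A_n^{-1})^*$, whence $\|(A_n^*)^{-1}\|=\|A_n^{-1}\|$. As both $(\ref{eq:11})$ and $(\ref{eq:12})$ are the conjunctions over $n\in\N$ of these per-$n$ statements, the desired equivalence follows at once.

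I do not expect a genuine obstacle here; the only step that truly uses the Hilbert structure and the equality of the dimensions is this last one, upgrading injectivity to invertibility and observing that the adjoint preserves the norm of the inverse. An equivalent way to package the same point is to say that $A_n$ and $A_n^*$ have the same singular values, the least of which is the optimal $\beta$ in either inequality — which also explains why the optimal constants coincide.
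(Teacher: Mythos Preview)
Your proof is correct and follows essentially the same route as the paper: define $A_n:\U_n\to\V_n$ via the Riesz representation, identify each condition with a lower bound on $A_n$ (respectively $A_n^*$), and conclude using $\|(A_n^*)^{-1}\|=\|(A_n^{-1})^*\|=\|A_n^{-1}\|$. If anything, you are slightly more explicit than the paper in justifying why the lower bound yields invertibility (via $\dim\U_n=\dim\V_n<\infty$), and your closing remark on singular values is a nice gloss on why the optimal constants coincide.
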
	
\begin{proof}
Let $n\in\N$ and $A_n:\U_n\to\V_n$ be given by 
\[ \langle A_n u,v\rangle_\V =a(u,v).   \]	
Then 
\[  \langle A_n^* v,u\rangle_{\U}=a^*(v,u)=\overline{a(u,v)},    \]
where $A_n^*$ is the adjoint of $A$. Moreover, since $A_n$ is invertible, 
\[  \sup_{  \|v\|_\V= 1, v\in\V_n} |a(u,v)|\geq \beta \|u\|_\U  \]
for all $u\in \U_n$ if and only if $\|A_n^{-1}\|\leq \frac{1}{\beta}$. Since $(A_n^*)^{-1}=(A_n^{-1})^*$, it follows that $\| (A_n^*)^{-1}\|= \|(A_n^{-1})^*\|=\|A_n^{-1}\|\leq \frac{1}{\beta}$ and hence 
\[  \sup_{  \|u\|_\U\leq 1, u\in\U_n}|a^* (v,u)|\geq \beta\|v\|_\V\mbox{ for all }v\in\V_n.\]
\end{proof}

		W. V. Petryshyn, namely  in Theorem 2  
		and 3 of \cite{petryshyn},  considers approximation of an operator equation by finite dimensional problems and characterizes strong convergence. However, besides in very special situations, it sems not possible to deduce from this convergence of a Galerkin approximation, formulated in terms of sesquilinear forms. Further results for operator equations and their approximation  can be  found in the monograph \cite[p. 26 ff]{Silbermann}.

\section{
	Existence of a converging Galerkin approximation}\label{sec:new3}
In this section, we again let $\U$ and $\V$ be separable reflexive real Banach spaces and let $a:\U\times \V\to \R$ be a continuous sesquilinear form such that the problem \eqref{eq:3.1}
is well-posed; i.e. for all $L\in\V'$ there exists a unique $u\in\U$ satisfying (\ref{eq:3.1}).  Since $\U$ and $\V$ are separable, there always exist approximating sequences 
$(\U_n)_{n\in\N} $ of $\U$ and $(\V_n)_{n\in\N}$ of $\V$. Our question is whether there is a choice of these sequences which is adapted to the problem (\ref{eq:3.1}); i.e. such that 
the associated Galerkin approximation converges. We will show that the answer  is related to the approximation property. In fact, different versions of this property play a role; we recall them in the next definition.
\begin{defn}[Approximation property and Schauder decomposition]
 Let $\X$ be a separable Banach space. 
\begin{itemize}
        \item[a)] The space $\X$ has the \emph{approximation property} (AP) if, for every compact subset $K$ of $\X$ and every $\varepsilon>0$, there exists a finite rank operator $R\in{\mathcal L}(\X)$ such that 
        \[    \|    Rx-x\|<\varepsilon \mbox{ for all }x\in K.   \] 
    \item[b)] The space $\X$ has the \emph{bounded approximation property} (BAP) if there exists a sequence $(P_n)_{n\in\N}$ of finite rank operators in $\X$ such that 
	\[\hbox{for all }x\in \X,~\lim_{n\to \infty}   P_n x=x.\]
	\item[c)] The space $\X$ has the \emph{bounded projection approximation property} (BPAP) if each $P_n$ in b) can be chosen as a projection (i.e. such that $P_n^2=P_n$).
	 \item[d)] The space $\X$ possesses a \emph{finite dimensional 
	 	 decomposition} if one finds $(P_n)_{n\in\N}$ as in c) with the additional property
	 \begin{equation}\label{neq:3.2}
	 P_mP_n=P_nP_m=P_m\mbox{ for all }n\geq m.
	 \end{equation}
	 \item[e)] The space $\X$ has a \emph{Schauder basis} if d) holds with \[\dim (P_n - P_{n-1})\X =1\mbox{ for all }n\in\N.\] 
\end{itemize}
\end{defn}   
It is known that (BAP) is equivalent to (AP) if $\X$ is reflexive. The first counterexample of  a Banach space without (AP) has been given by Enflo \cite{enflo}. He constructed a space which is even separable and reflexive.

Obviously the properties a)--e) have decreasing generality. It was Read \cite{read86} who showed that (BAP) does not imply (BPAP), even if reflexive and separable spaces are considered. Szarek \cite{szarek} constructed a reflexive, separable Banach space having a finite dimensional Schauder decompositon but not a Schauder basis. Finally, it seems to be unknown whether (BPAP) implies the existence of a finite dimensional Schauder decomposition (see \cite[Sec. 5.7.4.6]{Pi07} and \cite[Problem 6.2]{Cas}). However,  if $\X$ is reflexive and separable, then these two properties are equivalent by \cite[Theorem 6.4 (3)]{Cas}). 

Concerning the notion of finite dimensional Schauder decomposition, there is an equivalent formulation, namely the existence of  finite dimensional subspaces $\X_n$ of $\X$  such that for each $x\in\X$ there exist unique $x_n\in\X_n$ such that $x=\sum_{n\in\N}x_n$
This explains the name. We refer to \cite[Chapter I]{LT77} , \cite{Cas} for more information and to \cite[Sec. 5.7.4]{Pi07} for the history of the approximation property.  In the following theorem, by the  hypothesis of well-posedness, the two Banach spaces $\U$ and $\V$ are isomorphic. For this reason they have the same Banach space properties.
 
\begin{thm}\label{th:new3.2}
	Let $\U$ and $\V$ be separable reflexive Banach spaces and let $a:\U\times\V\to \K$ be a continuous sesquilinear form such that (\ref{eq:3.1}) is well-posed.  Then the following assertions are equivalent.
	\begin{itemize}
		\item[(i)]  There exist approximating sequences  $(\U_n)_{n\in\N}$ of \,$\U$ and  $(\V_n)_{n\in\N}$ of \,$\V$ such that the associated Galerkin approximation converges.
		\item[(ii)] The space \,$\U$ has the (BPAP).
		\item[(iii)] The space \,$\U$ has a finite dimensional Schauder decomposition.
	\end{itemize} 
\end{thm}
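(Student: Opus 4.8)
The plan is to prove the cyclic implications $(iii)\Rightarrow(ii)\Rightarrow(i)\Rightarrow(iii)$, using the equivalence (BPAP)$\Leftrightarrow$(finite dimensional Schauder decomposition) for reflexive separable spaces already cited from \cite[Theorem 6.4(3)]{Cas}. Since that equivalence handles $(ii)\Leftrightarrow(iii)$ for free, the real content is the equivalence of these with $(i)$. Throughout I identify $\V$ with $\U$ via the isomorphism $\A^{-1}\colon\V'\to\U$ composed with the canonical duality when convenient, but it is cleaner to work directly with the operator $\A\colon\U\to\V'$ and its continuous inverse, which exists and is bounded by the well-posedness hypothesis together with the Inverse Mapping Theorem.

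First, $(ii)\Rightarrow(i)$ (equivalently starting from $(iii)$). Suppose $\U$ has the (BPAP) with projections $(P_n)_{n\in\N}$, $P_n$ of finite rank, $P_n x\to x$ for all $x$; by the uniform boundedness principle $C:=\sup_n\|P_n\|<\infty$. Set $\U_n:=P_n\U=\operatorname{ran}P_n$. These are finite dimensional and $\dist(u,\U_n)\le\|u-P_nu\|\to0$, so $(\U_n)$ is an approximating sequence of $\U$. Now I need a matching sequence $(\V_n)$ in $\V$ with $\dim\V_n=\dim\U_n$ such that (BNB) holds; then Theorem~\ref{th:3.2} (via Proposition~\ref{prop:3.3}) gives convergence of the Galerkin approximation. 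The natural choice is to transport $\U_n$ through the form: since $\A$ is an isomorphism, define $\V_n$ to be a subspace of $\V$ whose image under the restriction map, or rather such that $\A\U_n$ is "represented" in $\V_n$. Concretely I would let $\V_n$ be chosen so that the bilinear pairing $\U_n\times\V_n$ is nondegenerate with a uniform inf-sup constant. The cleanest route: pick for each $u\in\U_n$ with $\|u\|=1$ an element $v\in\V$, $\|v\|\le1$, with $a(u,v)=\langle\A u,v\rangle\ge\frac12\|\A u\|\ge\frac12\beta_0\|u\|$ where $\beta_0=\|\A^{-1}\|^{-1}$; a standard finite dimensional compactness/volume argument (as in the proof that a weak-$*$ dense finite dimensional subspace can be chosen) produces a finite dimensional $\V_n\subset\V$ containing enough such witnesses so that $\sup_{v\in\V_n,\|v\|\le1}|a(u,v)|\ge\frac{\beta_0}{2C}\|u\|$ for all $u\in\U_n$, uniformly in $n$; then one may enlarge or trim $\V_n$ to have exactly dimension $\dim\U_n$ while keeping an inf-sup bound and keeping $(\V_n)$ approximating (here one also adjoins the images of a fixed dense sequence so that $\dist(v,\V_n)\to0$ for all $v\in\V$). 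This yields (BNB), hence $(i)$.

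Next, $(i)\Rightarrow(ii)$ — this is where the Ritz projection enters and is, I expect, the main obstacle. Assume approximating sequences $(\U_n),(\V_n)$ with converging Galerkin approximation. By Theorem~\ref{th:3.2}, (BNB) holds with some $\beta>0$. As in the proof of Proposition~\ref{prop:3.3KatoArendt} (whose Hilbert hypothesis was only used for the Kato norm identity, not for the existence and boundedness of the projection), for each $w\in\U$ and each $n$ there is a unique $Q_nw\in\U_n$ with $a(Q_nw,\chi)=a(w,\chi)$ for all $\chi\in\V_n$; $Q_n$ is a projection onto $\U_n$ with $\|Q_n\|\le M/\beta$, so $\sup_n\|Q_n\|<\infty$. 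It remains to show $Q_nw\to w$ for every $w\in\U$. Fix $w$; since $(\U_n)$ is approximating choose $\chi_n\in\U_n$ with $\|w-\chi_n\|\to0$. Then $w-Q_nw=(I-Q_n)(w-\chi_n)$ because $Q_n\chi_n=\chi_n$, hence $\|w-Q_nw\|\le(1+\|Q_n\|)\|w-\chi_n\|\le(1+M/\beta)\|w-\chi_n\|\to0$. Thus $(Q_n)$ is a sequence of finite rank projections with $Q_nw\to w$ for all $w$, which is precisely the (BPAP) for $\U$. This completes $(i)\Rightarrow(ii)$, and with $(ii)\Leftrightarrow(iii)$ from \cite{Cas} the cycle closes.

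The delicate point worth flagging in the write-up is the construction of $\V_n$ in $(ii)\Rightarrow(i)$: one must simultaneously guarantee (a) $\dim\V_n=\dim\U_n$, (b) a uniform discrete inf-sup bound, and (c) that $(\V_n)$ is an approximating sequence of $\V$. Conditions (b) and (c) pull in slightly different directions — (b) wants $\V_n$ tailored to $\A\U_n$, while (c) wants $\V_n$ to exhaust a fixed dense set — but both can be met because, $\A$ being an isomorphism, $\A\U_n$ already "fills out" $\V'$ in the sense needed, and finite dimensional perturbations that add a fixed dense sequence's worth of vectors do not destroy a strict inequality $\sup_{v\in\V_n,\|v\|\le1}|a(u,v)|\ge c\|u\|$ on $u\in\U_n$ (adding test vectors only increases the sup). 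One then matches dimensions by noting the inf-sup inequality forces $\dim\V_n\ge\dim\U_n$ and one can discard superfluous directions in $\V_n$ along a complement chosen to preserve the bound; alternatively, symmetrically enlarge $\U_n$ (it is still generated by projections up to a harmless modification) — but the first option keeps $(\U_n)=(\operatorname{ran}P_n)$ intact and is preferable. I would present this as a short lemma so the proof of the theorem itself reads cleanly as the three implications above.
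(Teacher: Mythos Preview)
Your $(i)\Rightarrow(ii)$ via the Ritz projection $Q_n$ is correct and is exactly the paper's argument. The gap is in your $(ii)\Rightarrow(i)$: the construction of $(\V_n)$ is not carried out, and the sketched approach does not work. Your ``compactness/volume'' argument, even granting that it produces for each $n$ a finite dimensional $\V_n$ with a discrete inf--sup bound, does not give that bound \emph{uniformly in $n$} together with $\dim\V_n=\dim\U_n$: Auerbach-type constructions lose a factor of the dimension, and your proposed fix---add test vectors to make $(\V_n)$ approximating, then discard to match dimension---is not a proof, since discarding test directions can destroy both the inf--sup bound and the approximation property. You give no mechanism to reconcile requirements (a)--(c) starting from (BPAP) alone, and the acknowledged tension between (b) and (c) is exactly the difficulty.

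The paper sidesteps this by proving $(iii)\Rightarrow(i)$ instead, using $(ii)\Rightarrow(iii)$ from Casazza to close the cycle. From a finite dimensional Schauder decomposition $(P_n)$, write $P_n=\sum_{k=1}^m\langle\varphi_k,\cdot\rangle b_k$ with $\langle\varphi_i,b_j\rangle=\delta_{ij}$, and set $\U_n=\span\{b_k\}$ and $\V_n=\span\{v_k\}$ where $v_k:=(\A^{-1})'\varphi_k\in\V$ (reflexivity is used here). One computes $a(b_i,v_j)=\langle\varphi_j,b_i\rangle=\delta_{ij}$, so the Galerkin solution in $\U_n$ for data $L$ is \emph{exactly} $P_n\A^{-1}L=P_nu$; quasi-optimality then follows from $\sup_n\|P_n\|<\infty$, with no separate (BNB) verification needed. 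The step your outline cannot supply is why $(\V_n)$ is approximating: since $\V_n=(\A^{-1})'(P_n'\U')$, one needs $P_n'\to\Id$ strongly on $\U'$. This is precisely where the nesting $P_mP_n=P_m$ (for $n\ge m$) of a Schauder decomposition enters: it makes $\bigcup_n P_n'\U'$ a \emph{subspace}, which is weak-$*$ dense since $P_n\to\Id$ strongly, hence norm-dense by Mazur; on this dense subspace $P_n'$ is eventually the identity, and uniform boundedness finishes. With only (BPAP), $\bigcup_n P_n'\U'$ need not be a subspace and the argument breaks down---so route your implication through $(iii)$, not $(ii)$.
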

Here convergence of the associated Galerkin approximation is understood in the sense of Definition~\ref{def:3.1}.   
\begin{proof}[Proof of Theorem~\ref{th:new3.2}]
$(i) \Rightarrow (ii)$ Let $u\in\V$. Then $\langle L,v\rangle :=a(u,v)$ defines an element $L\in\V'$. By Definition~\ref{def:3.1}, for each $n\in \N$, there exists  a unique $P_n u\in\V_n$ such that 
\[  a(P_n u,\chi) =a(u,\chi)\mbox{ for all }\chi\in\V_n .  \]
Moreover, $\|P_nu-u\|\leq \gamma \dist(\U_n,u)$
for all $n\in\N$ and some $\gamma>0$. In particular, $\lim_{n\to \infty}P_nu=u$. It follows from the definition that $P_n^2=P_n$. Since $P_n\U\subset \U_n$, each $P_n$ has finite rank. We have shown that the space $\U$ has the (BPAP). \\
$(ii) \Rightarrow (iii)$ See \cite[Theorem 6.4 (3)]{Cas}.\\
$(iii) \Rightarrow (i)$ Let $\A :\U\to\V'$ be the operator defined by $\langle \A u,v\rangle =a(u,v)$. Then $\A$ is invertible. By hypothesis there exist finite rank projections $(P_n)_{n\in\N}$ such that $\lim_{n\to \infty}P_nu=u$ for all $u\in\U$. Let $L\in\V'$, $u:=\A^{-1}L$ be the solution of (\ref{eq:3.1}). Then
 \begin{equation}\label{neweq:3.3}
 u_n:=P_n \A^{-1} L\to u\mbox{ in }\U\mbox{ as }n\to\infty.
 \end{equation}   	
 We show that $u_n$ is obtained as a Galerkin approximation. In fact, fix $n\in\N$. There exist $b_1,\cdots, b_m\in\U$, $\varphi_1,\cdots,\varphi_m\in\U'$ such that  $\langle \varphi_i,b_j\rangle =\delta_{i,j}$ and 
 \begin{equation}\label{eq:new3.4}
  P_n x=\sum_{k=1}^m\langle \varphi_k ,x\rangle b_k  
\end{equation}    
for all $x\in\U$. Since $\V$ is reflexive there exist $v_k\in\V$ such that 
\begin{equation}\label{eq:new3.5}
\langle \varphi_k,\A^{-1}g\rangle =\langle g,v_k\rangle
\end{equation} 
for all $g\in\V'$ and $k=1,\cdots,m$. Define $\V_n=\span \{v_1,\cdots , v_m\}$ and $\U_n=\span\{b_1,\cdots,b_m   \}$. Now consider the given $L\in \V'$. Let $w=\sum_{k=1}^m \lambda_k b_k\in\U_n$. Then 
\begin{equation}\label{eq:new3.6}
a(w,\chi)=\langle L,\chi\rangle \mbox{ for all }\chi\in \V_n
\end{equation}
if and only if 
\begin{equation}\label{eq:new3.7}
a(w,v_j)=\langle L,v_j\rangle \mbox{ for }j=1,\cdots,m.
\end{equation}
By (\ref{eq:new3.5}), 
\[ a(w,v_j) =\sum_{k=1}^m \lambda_k a(b_k,v_j)= \sum_{k=1}^m \lambda_k\langle \A b_k,v_j\rangle= \sum_{k=1}^m \lambda_k \langle \varphi_j, b_k\rangle = \lambda_j. \]   
Therefore $w=\sum_{k=1}^m\langle L,v_k\rangle b_k$ is the unique solution of (\ref{eq:new3.6}). Again, by (\ref{eq:new3.5}), 
\[ u_n=P_n \A^{-1} L=\sum_{k=1}^m \langle \varphi_k, \A^{-1}L\rangle b_k= \sum_{k=1}^m \langle L,v_k\rangle b_k=w,   \]
and it follows from (\ref{neweq:3.3}) that $\lim_{n\to \infty}u_n=u$. This also implies that $\dist(\U_n,u)\to 0$ as $n\to\infty$. Thus the sequence $(\U_n)_{n\in\N}$ is approximating. 

It remains to show that  the sequence $(\V_n)_{n\in\N}$ is approximating in $\V$. For this we need the the additional property (\ref{neq:3.2}). Consider the adjoint $P_n'\in{\mathcal L}( \U')$ of $P_n$. Then $P_n'\varphi$ weakly converges to $\varphi$ as $n\to\infty$ for all $\varphi\in\U'$. Thus 
\[\W:=   \cup_{n\in\N}P_n'\U'\]
is weakly  dense in $\U'$. But, because of (\ref{neq:3.2}), $\W$ is a subspace of $\U'$. Thus, by Mazur's Theorem, $\W$ is dense in $\U'$.  If $\psi\in \W$, then there exist $m\in\N,\varphi\in\U'$ such that $\psi=P_m'\varphi$. Thus 
\[P_n'\psi=P_n'P_m'\varphi  =P_m'\varphi=\psi,\]
for all $n\in\N$ by (\ref{neq:3.2}), and then $\lim_{n\to \infty}P_n'\psi=\psi$ for all $\psi\in \W$. Since $\sup_{n\in\N}\|P_n'\|<\infty$, it follows that $\lim_{n\to \infty}P_n'\varphi=\varphi$ for all $\varphi\in \U'$. This implies that the sequence $(P_n'\U')_{n\in\N}$ is approximating in $\U'$. It follows from (\ref{eq:new3.5}) that $\V_n\supset(\A^{-1})'P_n'\U'$. In fact, fix $n$ and consider $P_n$ as in (\ref{eq:new3.4}). Then (\ref{eq:new3.5}) says that $v_k=(\A^{-1})'\varphi_k$. Since $(P_n'\U')_{n\in\N}$ is an approximating sequence in $\U'$ and  $(\A^{-1})'$ is an isomorphism from $\U'$ to $\V$, it follows that $(\V_n)_{n\in\N}$ is an approximating sequence in $\V$. 
\end{proof}	
\section{Essentially coercive forms}

Let $\V$ be a separable Hilbert space over $\K=\C$ or $\R$ and  $a:\V\times \V\to \K$ be a sesquilinear form satisfying 
\[ |a(u,v)|\leq M\|u\|_{\V} \|v\|_{\V}\mbox{ for all }u,v\in \V \]
for some $M>0$.   
Then we may associate with $a$ the operator $\A\in {\mathcal L}(\V,\V')$ defined by 
\[   \langle \A u,v\rangle =a(u,v).\]
If $a$ is \emph{coercive}, i.e. if 
\[|a(u,u)|\geq \alpha \|u\|_{\V}^2   \quad (u\in \V)\]
for some $\alpha>0$, then $\A$ is invertible. This consequence is the well-known Lax-Milgram lemma. 

\begin{rem}
			The notion of \emph{coercivity} is not uniform in the literature. Ours is the natural hypothesis of the Lax-Milgram Lemma and is conform with the Wikipedia  entry "Babuska-Lax-MilgramTheorem". In non-linear analysis there is a wide agreement on this notion: In the real case, a possibly non-linear operator $\A\in {\mathcal L}(\V,\V')$ is called \emph{coercive} if  there exists a function $\eta : \R \to \R$ such that $\eta(t) \to \infty$ as $t \to \infty$ and $\langle \A v,v\rangle \geq \eta(\|u\|_\V) \|v\|_\V $ for all $v\in \V$. If $\A$ is linear this is equivalent to the existence of $\alpha>0$ such that \[\langle \A u,u\rangle\geq \alpha \|u\|_{\V}^2   \quad (u\in \V),\]
i.e. our condition without the absolute value. This is a "forcing condition" which justifies the name \emph{coercive}. Other authors prefer the word \emph{$\V-$ellipticity}, see e.g. \cite{hackbusch}, \cite{lions-mangenes}. We use \emph{elliptic} for \emph{shifted coercivity} in \cite{AtKS14}, see also the remark at the end of this section.

	\end{rem}

Our aim is to find weaker assumptions than coercivity which help to decide whether the operator $\A$ is invertible. 

Note that $a$ is coercive if and only if 
\[  \lim_{n\to \infty} a(u_n,u_n)=0\mbox{ implies that }\lim_{n\to \infty}\|u_n\|_{\V}=0. \] 
We weaken this property in the following way.
\begin{defn}[Essential coercivity]\label{def:2.1}
	The continuous sesquilinear form $a$ (or the operator $\A$) is called essentially coercive if 
	for each sequence $(u_n)_{n\in \N}$ in $\V$ weakly converging to $0$ and such that $\lim_{n\to \infty}a(u_n,u_n)=0$, one has $\lim_{n\to \infty}\|u_n\|_{\V}=0$. 
\end{defn}
The following is a characterization of this new property. 
\begin{thm}\label{th:new2}
	The following assertions are equivalent:
	\begin{enumerate}
		\item[(i)]  the form $a$ is essentially coercive;
		\item[(ii)] there exist an orthogonal projection $P\in{\mathcal L}(\V)$ of finite rank and $\alpha>0$ such that 
		\[   |a(u,u)| +\| Pu\|_\V^2 \geq \alpha \| u\|_\V^2\mbox{ for all }u\in \V;   \]
		\item[(iii)] there exist a Hilbert space $\GH$, a compact operator $J:\V\to \GH$ and $\alpha>0$ such that 
		\[ |a(u,u)|+\|   Ju\|_{\GH}^2 \geq \alpha \|u\|_{\V}^2\quad (u\in \V);  \]
		\item[(iv)] there exist a compact operator ${\mathcal K}\in {\mathcal L}(\V,\V')$ and $\alpha>0$  such that 
		\[ |a(u,u)|+|\langle {\mathcal K}u,u\rangle| \geq \alpha \|u\|^2_\V\quad (u\in\V).  \]
	\end{enumerate}
\end{thm}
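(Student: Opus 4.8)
The plan is to prove the cycle $(iv)\Rightarrow(iii)\Rightarrow(ii)\Rightarrow(i)\Rightarrow(iv)$ (possibly reorganized, but this is the natural order), since the implications between the various "compact perturbation" formulations (ii)--(iv) are mostly soft functional-analytic manipulations, while the equivalence with essential coercivity (i) is where the real content lies.

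First I would dispatch the easy implications. For $(ii)\Rightarrow(iii)$: take $\GH=\V$ and $J=P$, which is compact because it has finite rank; the inequality is literally the same. For $(iii)\Rightarrow(iv)$: given the compact $J:\V\to\GH$, set ${\mathcal K}:=J^*J$ viewed as an operator $\V\to\V'$ via the Riesz identification, i.e. $\langle{\mathcal K}u,u\rangle=\langle Ju,Ju\rangle_{\GH}=\|Ju\|_{\GH}^2$, which is compact (composition of a compact with a bounded operator) and nonnegative, so $|\langle{\mathcal K}u,u\rangle|=\|Ju\|_{\GH}^2$ and the inequality carries over verbatim. For $(iv)\Rightarrow(i)$: suppose $u_n\rightharpoonup 0$ with $a(u_n,u_n)\to0$. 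Since ${\mathcal K}\in{\mathcal L}(\V,\V')$ is compact, ${\mathcal K}u_n\to 0$ strongly in $\V'$, while $(u_n)$ is bounded, so $\langle{\mathcal K}u_n,u_n\rangle\to0$. The inequality in (iv) then forces $\alpha\|u_n\|_\V^2\le|a(u_n,u_n)|+|\langle{\mathcal K}u_n,u_n\rangle|\to0$, hence $\|u_n\|_\V\to0$. This is essential coercivity.

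The substantive step is $(i)\Rightarrow(ii)$: from the weak-strong condition in Definition~\ref{def:2.1} we must extract a genuine finite-rank orthogonal projection $P$ and a constant $\alpha>0$. The strategy is a compactness/contradiction argument on the unit sphere. For a finite-rank orthogonal projection $P$, put
\[
c(P):=\inf\{\,|a(u,u)|+\|Pu\|_\V^2 : \|u\|_\V=1\,\}.
\]
If no $P$ works, then in particular, taking $P=P_F$ to be the orthogonal projection onto the span of the first $F$ vectors of a fixed orthonormal basis $(e_k)_{k\in\N}$ of $\V$, we get $c(P_F)<1/F$ for every $F$, so there exist unit vectors $u_F$ with $|a(u_F,u_F)|+\|P_F u_F\|_\V^2<1/F$. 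Now $(u_F)$ is bounded, so a subsequence converges weakly to some $u_\infty\in\V$. Since $\|P_F u_F\|_\V\to0$ and $P_F\to\Id$ strongly, a short argument (using that the basis is fixed: for any fixed $k$, $\langle u_F,e_k\rangle=\langle P_F u_F,e_k\rangle\to 0$ once $F\ge k$) shows $u_\infty=0$, i.e. $u_F\rightharpoonup 0$. Also $a(u_F,u_F)\to0$. Essential coercivity then gives $\|u_F\|_\V\to0$, contradicting $\|u_F\|_\V=1$. Hence some finite-rank orthogonal projection $P$ gives $c(P)=:\alpha>0$, which is (ii). The main obstacle is making this contradiction argument airtight: one must be careful that the weak limit of the $u_F$ is genuinely zero (this is why it is convenient to fix one orthonormal basis and let $P_F$ be the coordinate projections rather than ranging over arbitrary projections), and one must check that taking a single subsequence simultaneously handles the weak convergence and the convergences $a(u_F,u_F)\to0$, $\|P_F u_F\|_\V\to 0$.

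Two remarks on robustness of the plan. In the complex case everything above is unchanged since we only ever use $|a(u,u)|$ and real squared norms. One could alternatively close the cycle as $(i)\Rightarrow(iv)$ directly — essential coercivity says the quadratic form $u\mapsto|a(u,u)|$ "sees" the norm modulo weak convergence, and a standard argument (the form $b(u):=|a(u,u)|$ is weakly lower semicontinuous in a suitable sense) produces the compact corrector — but routing through the finite-rank projection in (ii) is cleaner and avoids delicate semicontinuity issues for the modulus of a non-symmetric form. With (ii) in hand, (iii) and (iv) are immediate as above, so the proof closes.
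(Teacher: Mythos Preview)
Your proof is correct and follows essentially the same route as the paper: the same cycle $(i)\Rightarrow(ii)\Rightarrow(iii)\Rightarrow(iv)\Rightarrow(i)$, with the easy implications handled identically (take $J=P$; take ${\mathcal K}=J^*J$; use that a compact operator sends weakly null sequences to strongly null ones). For the substantive step $(i)\Rightarrow(ii)$ the paper does exactly what you do---fix an orthonormal basis, let $P_n$ be the coordinate projections, assume (ii) fails for every $P_n$, and manufacture a weakly null unit sequence with $a(u_n,u_n)\to 0$---the only cosmetic difference being that the paper shows the whole sequence $u_n\rightharpoonup 0$ directly (via $|\langle(\Id-P_n)u_n,v\rangle|\le\|(\Id-P_n)v\|\to 0$) rather than passing to a subsequence and identifying the weak limit coordinatewise.
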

\begin{proof}
	$(i)\Rightarrow (ii)$: Let $(e_n)_{n\in \N}$ be an orthonormal basis of $\V$ and consider the orthogonal projections  $P_n$ given by 
	\[   P_n v:=\sum_{k=1}^{n} \langle v ,e_k\rangle_\V \;e_k.     \]
	
	Assume that (ii) is false for every $P_n$. 
	 Then there exists a sequence  $(u_n)_{n\in\N}\subset  \V$ such that  $\|u_n\|_\V=1$ and \[|a(u_n,u_n)| +\|P_nu_n\|_\V^2<\frac{1}{n}.\] Note that, since $\Id-P_n$ is a self-adjoint operator,
	\[ |\langle (\Id -P_n)u_n,v\rangle_\V|=|\langle u_n,(\Id-P_n)v\rangle_\V|\leq \|(\Id-P_n)v\|_\V,  \]
	with $\lim_{n\to \infty}    \|(\Id-P_n)v\|_\V=0$ for all $v\in \V$. This implies that $(\Id-P_n)u_n$ converges weakly to $0$. 	Since $\lim_{n\to\infty}\|P_n u_n\|_\V=0$, it follows that $u_n$ converges weakly to $0$. Moreover $\lim_{n\to\infty}|a(u_n,u_n)|\leq \lim_{n\to \infty}\frac{1}{n}=0$. Therefore $a$ is not essentially coercive.  \\
	$(ii)\Rightarrow (iii)$: Choose $\GH=\V$ and $J=P$. \\
	$(iii)\Rightarrow (iv)$: There exists a unique operator $J^*:\GH\to \V'$ such that 
	\[  \langle J^* u,v\rangle =\langle u,Jv\rangle_{\GH}  \] 
	for all $v\in\V$. 
	Choose ${\mathcal K}=J^*J$. \\
	$(iv)\Rightarrow (i)$: Let $(u_n)_{n\in\N}\subset \V$ that tends weakly to $0$ and such that $a(u_n,u_n)=\langle \A u_n,u_n\rangle$ tends to $0$ as $n\to\infty$. Since ${\mathcal K}$ is compact, $\|{\mathcal K}u_n\|_\V\to 0$ as $n\to\infty$. Hence $|\langle {\mathcal K}u_n,u_n\rangle_\V|\to 0$ as $n\to\infty$.   By assumption   there 
	exists $\beta>0$ such that 
	\[ |\langle \A u_n,u_n\rangle |+|\langle {\mathcal K}u_n,u_n\rangle|\geq \beta \|u_n\|_{\V}^2.   \] 
	 It follows 
	that $\|u_n\|_\V\to 0$ as $n\to\infty$.  
\end{proof}
Next we want to justify the notion "essentially coercive". We recall that by the Toeplitz--Hausdorff theorem \cite{gusta}, the \emph{numerical range} of $a$,
\[W(a):=\{    a(u,u):u\in\V,\|u\|_\V =1 \},\] 
is a convex set. Hence also $\overline{W(a)}$ is convex. For $\alpha>0$,
\[|a(u,u)|\geq \alpha \|u\|^2_\V  \;\;\;\; (u\in\V)  \]  
if and only if 
\[  \overline{W(A)}\cap D_\alpha=\emptyset,  \]
where $D_\alpha=(-\alpha,\alpha)$ in the real case and $D_\alpha =\{ w\in\C : |w|<\alpha  \}$ if $\K=\C$. This observation leads to the following more  
precise description of coercivity.  
\begin{lem}\label{lem:new-4.3}
	The form $a$ is coercive if and only if there exist $\alpha>0$ and $\lambda\in\K$ with $|\lambda|=1$ such that  
	 \[ \mathop{Re}(\lambda z)\geq \alpha\mbox{ for all } z\in W(a). \]
\end{lem}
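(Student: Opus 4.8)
The plan is to translate the geometric condition about the numerical range into the analytic condition defining coercivity, using the convexity of $\overline{W(a)}$ established by Toeplitz--Hausdorff. Recall from the discussion preceding the lemma that $a$ is coercive if and only if $\overline{W(a)} \cap D_\alpha = \emptyset$ for some $\alpha > 0$, where $D_\alpha$ is the open interval $(-\alpha,\alpha)$ in the real case and the open disc of radius $\alpha$ in the complex case. So the task reduces to showing that $\overline{W(a)}$ avoids some $D_\alpha$ if and only if there is a unit $\lambda \in \K$ and $\alpha > 0$ with $\re(\lambda z) \geq \alpha$ for all $z \in W(a)$.

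For the easy direction, suppose such a $\lambda$ and $\alpha$ exist. Then $W(a)$, and hence $\overline{W(a)}$, is contained in the closed half-plane $\{z : \re(\lambda z) \geq \alpha\}$ (taking closures preserves the non-strict inequality). Every point $z$ of that half-plane satisfies $|z| = |\lambda z| \geq \re(\lambda z) \geq \alpha$, so $\overline{W(a)} \cap D_\alpha = \emptyset$, giving coercivity. In the real case $\lambda = \pm 1$ and the same computation shows $|z| \geq \alpha$ on $\overline{W(a)}$.

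For the converse, assume $a$ is coercive, so $C := \overline{W(a)}$ is a closed convex set (nonempty, since $\V \neq \{0\}$) disjoint from the open ball $D_\alpha$ for some $\alpha > 0$. Apply the Hahn--Banach separation theorem in $\K$ (viewed as $\R$ or $\R^2$) to separate the convex open set $D_\alpha$ from the convex set $C$: there is a nonzero real-linear functional and a constant $c$ with the functional $\geq c$ on $C$ and $\leq c$ on $D_\alpha$. Every real-linear functional on $\C$ has the form $z \mapsto \re(\mu z)$ for some $\mu \in \C$; normalizing we may take $|\mu| = 1$ and write $\lambda = \mu$. Since $\sup_{D_\alpha} \re(\lambda z) = \alpha$ (the supremum of a linear functional of norm one over the open ball of radius $\alpha$), we get $\re(\lambda z) \geq \alpha$ for all $z \in C \supseteq W(a)$, as required. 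In the real case the separation is even more elementary: $C \subseteq \R$ is a closed convex set disjoint from $(-\alpha,\alpha)$, hence $C \subseteq [\alpha,\infty)$ or $C \subseteq (-\infty,-\alpha]$, and one takes $\lambda = 1$ or $\lambda = -1$ respectively.

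The main (and only) subtlety is the separation step: one must separate an \emph{open} convex set from a \emph{closed} convex disjoint set and be careful about whether the resulting inequality on $C$ is strict or non-strict. Using the closed ball's supremum value $\alpha$ for the functional $\re(\lambda \cdot)$ (rather than trying to get a strict inequality) cleanly delivers the non-strict bound $\re(\lambda z) \geq \alpha$ on $C$, which is exactly what the statement asks for. Everything else is a routine unwinding of the identification of $W(a)$-avoidance-of-$D_\alpha$ with coercivity, already recorded in the text.
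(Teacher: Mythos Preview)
Your proof is correct. The easy direction matches the paper's, but for the substantive direction you take a different route: you invoke the Hahn--Banach separation theorem to produce a separating functional between the open ball $D_\alpha$ and the closed convex set $\overline{W(a)}$, then normalize and read off the inequality. The paper instead argues by hand: it takes the \emph{maximal} $\alpha$ with $\overline{W(a)}\cap D_\alpha=\emptyset$, locates a point $z_0\in\overline{W(a)}$ of modulus exactly $\alpha$, rotates so that $z_0=\alpha$, and then uses a direct convexity-of-segments argument (if some $z$ in the rotated set had $\mathop{Re} z<\alpha$, the segment $[\alpha,z]$ would enter $D_\alpha$) to conclude. Your approach is cleaner and more conceptual, and it avoids the maximal-$\alpha$ manoeuvre; the paper's approach is more elementary in that it needs nothing beyond the definition of convexity in the plane and no appeal to Hahn--Banach. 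Both deliver the same $\alpha$ in the conclusion as was used for coercivity.
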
 
\begin{proof}
We give the proof for $\K=\C$. Assume that $a$ is coercive. There exists a maximal $\alpha>0$ such that 
$\overline{W(a)}\cap D_a=\emptyset$. Then there exists $z_0\in \overline{W(a)}$ of modulus $\alpha$; i.e. $z_0=e^{i\theta}\alpha$ for some $\theta\in\R$. The set $C:=e^{-i\theta}\overline{W(a)}$ is convex and closed. Moreover $\alpha\in C$ and $D_\alpha\cap C =\emptyset$. This implies that $\mathop{Re}(z)\geq \alpha$ for all $z\in C$. Indeed, let $z\in C$ such that $\mathop{Re}(z)<\alpha$. Then the segment $[\alpha,z]$ has a non-empty intersection with $D_\alpha$. Since $C$ is convex it follows that $z\not\in C$.  

Conversely, clearly,  if  there exists $\alpha>0$ such that $ \mathop{Re}(\lambda z)\geq \alpha$ for all $z\in W(a)$, then $a$ is coercive. 
\end{proof}
\begin{thm}\label{th:new4.4}
	Let $\A \in {\mathcal L}(\V,\V')$. The following assertions are equivalent:
\begin{enumerate}
	\item[(i)] the operator $\A$ is essentially coercive; 
	\item[(ii)] there exists a finite rank operator ${\mathcal K}:\V\to\V'$ such that $\A+{\mathcal K}$ is coercive;
	\item[(iii)] there exists a compact operator ${\mathcal K}:\V\to\V'$ such that $\A+{\mathcal K}$ is coercive.
\end{enumerate}	
\end{thm}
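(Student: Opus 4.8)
The plan is to prove the cycle $(ii)\Rightarrow(iii)\Rightarrow(i)\Rightarrow(ii)$. The implication $(ii)\Rightarrow(iii)$ is immediate, since every finite rank operator is compact. The implication $(iii)\Rightarrow(i)$ is essentially already contained in the argument for $(iv)\Rightarrow(i)$ in Theorem~\ref{th:new2}: if $\mathcal K:\V\to\V'$ is compact and $\A+\mathcal K$ is coercive, then for a sequence $(u_n)$ tending weakly to $0$ with $a(u_n,u_n)\to 0$, compactness of $\mathcal K$ (using that $\V$ is reflexive, so the weakly null $u_n$ are mapped to norm-null vectors in $\V'$) gives $\langle \mathcal K u_n,u_n\rangle\to 0$, hence $\langle(\A+\mathcal K)u_n,u_n\rangle\to 0$, and coercivity of $\A+\mathcal K$ forces $\|u_n\|_\V\to 0$. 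So the only substantive part is $(i)\Rightarrow(ii)$.

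For $(i)\Rightarrow(ii)$, I would start from the characterization in Theorem~\ref{th:new2}(ii): essential coercivity gives a finite rank orthogonal projection $P$ and $\alpha>0$ with $|a(u,u)|+\|Pu\|_\V^2\ge\alpha\|u\|_\V^2$ for all $u\in\V$. The issue is that the term $\|Pu\|_\V^2=\langle Pu,u\rangle_\V$ (using $P=P^*=P^2$) enters with an absolute value through $|a(u,u)|$, not linearly, so one cannot directly read off a coercive form $a+k$ with $k$ a genuine sesquilinear form. The fix is to rotate: by Lemma~\ref{lem:new-4.3} applied appropriately, or more directly, I would observe that $|a(u,u)|\ge\alpha\|u\|^2$ away from where $Pu$ is large means $\overline{W(a)}$ meets $D_\alpha$ only "in directions controlled by $P$." Concretely, on the finite-codimensional subspace $\ker P$ the estimate reads $|a(u,u)|\ge\alpha\|u\|_\V^2$, so by the Toeplitz–Hausdorff convexity argument of Lemma~\ref{lem:new-4.3} there is $\lambda\in\K$, $|\lambda|=1$, with $\re(\lambda\, a(u,u))\ge\alpha\|u\|_\V^2$ for all $u\in\ker P$ (one may need to first pass to a slightly smaller $\alpha$ and argue that the closed convex set $\overline{W(a|_{\ker P})}$, which misses $D_\alpha$, lies in an open half-plane). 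Then I would set $\mathcal K u := c\,\iota P u$, where $\iota:\V\to\V'$ is the Riesz identification composed with multiplication by $\bar\lambda$ and $c>0$ is a large constant, so that $\langle\mathcal K u,u\rangle = c\bar\lambda\langle Pu,u\rangle_\V = c\bar\lambda\|Pu\|_\V^2$; this $\mathcal K$ has finite rank. One then checks that $\re\big(\lambda\langle(\A+\mathcal K)u,u\rangle\big) = \re(\lambda a(u,u)) + c\|Pu\|_\V^2 \ge \alpha'\|u\|_\V^2$ for all $u\in\V$ and suitable $\alpha'>0$, by splitting $u = Pu + (\Id-P)u$ and using boundedness of $a$ together with the estimate on $\ker P$: the cross terms and the $\re(\lambda a(u,u))$ contribution are controlled by $c\|Pu\|_\V^2$ once $c$ is large. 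Finally, $\re(\lambda z)\ge\alpha'$ on $W(\A+\mathcal K)$ gives, again by Lemma~\ref{lem:new-4.3}, that $\A+\mathcal K$ is coercive.

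The main obstacle I anticipate is the bookkeeping in the last step: verifying that a single large constant $c$ and a single rotation $\lambda$ simultaneously handle the indefinite cross terms between $Pu$ and $(\Id-P)u$ and the loss incurred by passing from $|a(u,u)|$ to $\re(\lambda a(u,u))$ on all of $\V$ rather than just on $\ker P$. A cleaner route, which I would try first, is to avoid rotations on $\ker P$ altogether: decompose $\V = P\V \oplus \ker P$, note $P\V$ is finite dimensional so the restriction of $a$ there is irrelevant up to a finite rank perturbation, and directly build $\mathcal K$ as a large multiple of (a rotated) Riesz map on $P\V$; the delicate point remains showing the perturbed numerical range lands in a half-plane, which is exactly where Lemma~\ref{lem:new-4.3} and the convexity of $W(\cdot)$ are used.
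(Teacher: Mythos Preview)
Your proposal is correct, and both routes you sketch for $(i)\Rightarrow(ii)$ work. The paper's proof follows your ``cleaner route'': after obtaining the finite-rank orthogonal projection $P$ and the rotation $z_0$ (your $\lambda$) on $\ker P$ via Lemma~\ref{lem:new-4.3}, it writes $A=j^{-1}\A$ as a $2\times 2$ block operator with respect to $\V=\ker P\oplus P\V$, uses a finite-rank perturbation to \emph{annihilate} the off-diagonal blocks and $A_{22}$, and then sets the bottom-right block to $\alpha\overline{z_0}\,\Id_{P\V}$; coercivity of the resulting diagonal operator is then immediate without any cross-term bookkeeping. Your first approach---keep the cross terms and absorb them by a large constant $c$ in front of $\bar\lambda\,jP$---is a legitimate alternative: with $u=v+w$, $v\in\ker P$, $w\in P\V$, one has $\re(\lambda a(u,u))+c\|w\|^2\ge \alpha\|v\|^2-2M\|v\|\|w\|-M\|w\|^2+c\|w\|^2$, and Young's inequality gives the desired lower bound once $c\ge M+2M^2/\alpha+\alpha/2$. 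So the obstacle you anticipate is not a real one; the paper's elimination trick simply avoids this computation altogether.
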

\begin{proof}
$(i)\Rightarrow (ii)$: Choose the orthogonal finite rank projection $P$ on $\V$ and $\alpha>0$ as in Theorem~\ref{th:new2} (ii). Let $\V_1=\ker V$ and $\V_2=\mathop{range} P$. Then $\dim \V_2<\infty$ and $|a(u,u)|\geq \alpha \|u\|^2_\V$ for all $u\in\V_1$. Let $j:\V\to\V'$ be the Riesz isomorphism  given by 
\[   \langle j(u),v\rangle  =\langle u, v\rangle_{\V}.   \]
Let $A=j^{-1}\circ \A\in{\mathcal L}(\V)$. Then $a(u,v)=\langle Au,v\rangle_{\V}$ for all $u,v\in\V$. Moreover $A$ has a matrix decomposition
   	\[ A=\left(\begin{array}{cc}
   	A_{11} & A_{12}\\
   	A_{21} & A_{22}
   	\end{array}\right)    \] 
according to the decomposition $\V=\V_1\oplus \V_2$ of $\V$. Since  $P$ is orthogonal,  $A_{11}$ is coercive. Thus,  by Lemma~\ref{lem:new-4.3}, there exists $z_0\in\C$ such that $|z_0|=1$ and 
\[   \mathop{Re}z_0 \langle A_{11}u,u\rangle\geq \alpha \|u\|_\V^2  \]
for all $u\in\V_1$. Since $\dim \V_2<\infty$, there   exists a finite rank operator $K_1\in{\mathcal L}(\V)$ such that 
\[    A+K_1 = \left(\begin{array}{cc}
A_{11} & 0\\
0 & 0
\end{array}\right). \] 	
Choose a further finite rank perturbation $K_2$ such that 
\[  B:=A+K_1+K_2 = 
\left(\begin{array}{cc}
A_{11} & 0\\
0 & \alpha \overline{z_0}\Id_{\V_2}
\end{array}\right)  . 
\]
Since $P$ is orthogonal, for $Q=\Id -P$, we get
\[ \langle Bu,u\rangle_{\V} = \langle A_{11}Qu,Qu\rangle_{\V}+ \alpha \overline{z_0}\langle Pu,Pu\rangle_{\V}.       \]
Hence 
\[   \mathop{Re}\langle z_0 Bu,u\rangle_{\V} \geq \alpha \|Qu\|^2_{\V}+\alpha \|Pu\|^2_{\V}=\alpha \|u\|^2_{\V}.   \]	
Now let ${\mathcal K}=j\circ (K_1+K_2)$. Then $\A+{\mathcal K}$ is coercive.\\
$(ii)\Rightarrow (iii)$ is obvious.\\
$(iii)\Rightarrow (i)$: Condition $(iii)$ implies clearly Condition $(iv)$ of Theorem~\ref{th:new2}; thus the claim $(i)$ follows from that theorem. 
\end{proof}

\begin{cor}\label{cor:2.3}
	Let $a$ be a continuous  essentially coercive sesquilinear form. The following assertions are equivalent:
	\begin{enumerate}
		\item[(i)] for all $L\in \V'$ there exists a unique $u\in \V$ such that 
		\[ a(u,v)=\langle L,v\rangle\mbox{ for all }v\in \V;  \]
		\item[(ii)] $a(u,v)=0$ for all $v\in \V$ implies that $u=0$ (\emph{uniqueness});
		\item[(iii)] for all $L\in \V'$ there exists $u\in \V$ such that  $a(u,v)=\langle L,v\rangle$ for all $v\in \V$ (\emph{existence}). 	
	\end{enumerate}
\end{cor}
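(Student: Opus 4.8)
The plan is to prove Corollary~\ref{cor:2.3} by combining the Fredholm alternative for the compact perturbation with the characterization of essential coercivity. The implications $(i)\Rightarrow(ii)$ and $(i)\Rightarrow(iii)$ are trivial, so the real content is to show that either of $(ii)$ or $(iii)$ alone forces well-posedness. By Theorem~\ref{th:new4.4}, since $a$ is essentially coercive there is a compact operator ${\mathcal K}\in{\mathcal L}(\V,\V')$ such that $\A+{\mathcal K}$ is coercive; by the Lax--Milgram lemma $\A+{\mathcal K}$ is invertible. Write $\A=(\A+{\mathcal K})-{\mathcal K}=(\A+{\mathcal K})(\Id-(\A+{\mathcal K})^{-1}{\mathcal K})$. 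Setting $T:=(\A+{\mathcal K})^{-1}{\mathcal K}\in{\mathcal L}(\V)$, this is compact (composition of a bounded operator with a compact one), and $\A$ is invertible if and only if $\Id-T$ is invertible.

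First I would record the functional-analytic reduction above carefully, so that all three assertions become statements about the operator $\Id-T$ on the Hilbert space $\V$: assertion $(i)$ says $\Id-T$ is bijective; assertion $(ii)$ says $\Id-T$ is injective (note that $a(u,v)=0$ for all $v$ means $\A u=0$, i.e. $(\Id-T)u=0$ after applying $(\A+{\mathcal K})^{-1}$); assertion $(iii)$ says $\Id-T$ is surjective (here one must check that surjectivity of $\A:\V\to\V'$ is equivalent to surjectivity of $\Id-T:\V\to\V$, which follows since $\A+{\mathcal K}$ is an isomorphism $\V\to\V'$). Then I would invoke the Fredholm alternative (Riesz--Schauder theory): for a compact operator $T$ on a Banach space, $\Id-T$ is injective if and only if it is surjective if and only if it is bijective. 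This immediately gives $(ii)\Leftrightarrow(iii)\Leftrightarrow(i)$, and the Inverse Mapping Theorem gives continuity of the inverse for free, completing the proof.

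The argument is essentially routine once the reduction to $\Id-T$ is in place; the only point requiring a little care is the bookkeeping with the spaces $\V$ and $\V'$. Because $\A+{\mathcal K}$ is a topological isomorphism from $\V$ onto $\V'$, composing on the left with its inverse translates every statement about $\A:\V\to\V'$ faithfully into a statement about an operator $\V\to\V$, and the compactness of ${\mathcal K}$ is exactly what is preserved under this composition. I do not expect any genuine obstacle here — the main thing is to make sure the equivalence of the surjectivity statements (assertion $(iii)$ versus surjectivity of $\Id-T$) is spelled out rather than asserted, since that is the one place where the identification of $\V$ with $\V'$ via $\A+{\mathcal K}$ is actually used.

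Alternatively, one could avoid passing through $\V'$ entirely by working with the operator $A=j^{-1}\circ\A\in{\mathcal L}(\V)$ introduced in the proof of Theorem~\ref{th:new4.4}, where $j$ is the Riesz isomorphism; then $a(u,v)=\langle Au,v\rangle_\V$, essential coercivity gives a compact $K\in{\mathcal L}(\V)$ with $A+K$ coercive hence invertible, and the same Fredholm-alternative argument applies verbatim to $A$ on $\V$. This version is perhaps cleaner to write, and I would likely adopt it; the choice is cosmetic.
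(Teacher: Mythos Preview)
Your proposal is correct and follows essentially the same route as the paper: both use Theorem~\ref{th:new4.4} to obtain a compact $\mathcal{K}$ with $\B:=\A+\mathcal{K}$ invertible, factor $\A$ through $\B$ and a compact perturbation of the identity, and invoke the Fredholm alternative. The only cosmetic difference is that the paper uses two factorizations ($\A=\B(\Id-\B^{-1}\mathcal{K})$ for injectivity and $\A=(\Id-\mathcal{K}\B^{-1})\B$ for surjectivity), whereas you use the single factorization and transfer both conditions through the isomorphism $\B$; either way works.
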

\begin{proof}
	The assertion (i) means that $\A$ is invertible, the assertion (ii) means that $\A$ is injective and the assertion (iii) means that $\A$ is surjective.   By Theorem~\ref{th:new4.4}, there exists a compact operator ${\mathcal K}\in{\mathcal L}(\V,\V')$ such that $\A+{\mathcal K}=:{\mathcal B}$ is invertible. \\
	$(ii)\Rightarrow (i)$: Assume that $\A$ is injective. Write 
	\[\A=\B-{\mathcal K}=\B(\Id -\B^{-1}{\mathcal K}).\]
	Then also $(\Id -\B^{-1}{\mathcal K})$ is injective. Since $\B^{-1}{\mathcal K}$ is compact, it follows from the classical Fredholm alternative that $(\Id-\B^{-1}{\mathcal K})$ is invertible. Consequently also $\A$ is invertible. \\
	$(iii)\Rightarrow (i)$: If $\A$ is surjective, write $\A=(\Id-{\mathcal K}\B^{-1})\B$ to conclude that $(\Id-{\mathcal K}\B^{-1})$ is surjective. Again we deduce that $(\Id-{\mathcal K}\B^{-1})$ is invertible and so is $\A$.      	
\end{proof}

\begin{rem}
In the previous corollary we deduced from Theorem \ref{th:new4.4}   the Fredholm alternative. This conclusion is well-known, if a compact perturbation is given, see for example \cite[Theorem 22.D]{Zeidler},  or \cite[Lemma 6.108]{hackbusch}. Our point is that a priori it is not at all clear that the topological condition defining essential coercivity implies that the form is  a compact perturbation of a coercive form. This is what Theorem \ref{th:new4.4} shows. Note that, in \cite[p229]{Petryshyn75}, our notion of essential coercivity is attributed, under the name ``condition (S)'', to Felix Browder  \cite{browder}  if we identify the operator with a form.

\end{rem}

Moreover, we deduce from Theorem~\ref{th:new4.4} the following properties of essential coercivity.   
\begin{cor}
	\begin{enumerate}
		\item[(a)] The set of all essentially coercive operators on $\V$ is open in ${\mathcal L}(\V,\V')$. 
		\item[(b)] If $\A\in {\mathcal L}(\V,\V')$ is essentially coercive and ${\mathcal K}\in {\mathcal L}(\V,\V')$ is compact, then $\A+{\mathcal K}$ is essentially coercive. 
		\item[(c)] If $\A\in {\mathcal L}(\V,\V')$ is essentially coercive, then $\A$ is a Fredholm operator of index $0$.     
	\end{enumerate}	
\end{cor}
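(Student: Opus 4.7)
The plan is to reduce all three assertions to the characterization of Theorem~\ref{th:new4.4}: $\A$ is essentially coercive if and only if there exists a compact operator ${\mathcal K}_0\in{\mathcal L}(\V,\V')$ such that $\B:=\A+{\mathcal K}_0$ is coercive. Granting this, (b) is immediate: if $\mathcal K$ is compact, then $(\A+{\mathcal K})+({\mathcal K}_0-{\mathcal K})=\B$ is coercive and ${\mathcal K}_0-{\mathcal K}$ is still compact, so $\A+{\mathcal K}$ is a compact perturbation of a coercive operator, hence essentially coercive.

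For (a), I would first record the elementary lemma that the coercive operators already form an open subset of ${\mathcal L}(\V,\V')$. Indeed, if $|\langle\B u,u\rangle|\geq \alpha\|u\|_\V^2$ and $\mathcal E\in{\mathcal L}(\V,\V')$ has $\|\mathcal E\|<\alpha$, then
\[ |\langle(\B+\mathcal E)u,u\rangle|\geq |\langle\B u,u\rangle|-\|\mathcal E\|\,\|u\|_\V^2\geq (\alpha-\|\mathcal E\|)\|u\|_\V^2, \]
so $\B+\mathcal E$ is still coercive. Now, if $\A$ is essentially coercive and ${\mathcal K}_0$ makes $\B=\A+{\mathcal K}_0$ coercive with constant $\alpha$, then $(\A+\mathcal E)+{\mathcal K}_0=\B+\mathcal E$ is coercive for every $\mathcal E$ with $\|\mathcal E\|<\alpha$, so $\A+\mathcal E$ is essentially coercive. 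The conceptual point is that the same compact correction ${\mathcal K}_0$ serves every sufficiently small bounded perturbation of $\A$.

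For (c), I would write $\A=\B-{\mathcal K}_0=\B(\Id-\B^{-1}{\mathcal K}_0)$. Since $\B$ is coercive it is invertible by the Lax--Milgram Lemma, so $\B^{-1}\in{\mathcal L}(\V',\V)$ and $T:=\B^{-1}{\mathcal K}_0\in{\mathcal L}(\V)$ is compact as a composition of a bounded and a compact operator. Classical Riesz--Fredholm theory then gives that $\Id-T$ is a Fredholm operator on $\V$ of index $0$. The bijection $\B:\V\to\V'$ is trivially Fredholm of index $0$, and composition of Fredholm operators is Fredholm with additive index, so $\A=\B(\Id-T)$ is Fredholm of index $0$. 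None of the three steps is a serious obstacle once Theorem~\ref{th:new4.4} is in hand; that theorem is precisely what upgrades the topological definition of essential coercivity to the algebraic statement ``coercive $+$ compact'', which can then be manipulated directly by standard perturbation and Fredholm techniques.
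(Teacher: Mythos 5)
Your proof is correct and follows exactly the route the paper intends, namely reducing all three claims to the characterization in Theorem~\ref{th:new4.4} (essential coercivity $=$ coercive plus compact) and then applying standard perturbation and Riesz--Fredholm arguments. The paper states the corollary without a written proof, merely noting it is deduced from Theorem~\ref{th:new4.4}, and your argument supplies precisely that deduction.
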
	
The following example shows that the invertibility of $\A$  does not imply the essential coercivity of $a$. 
\begin{exam}
	Let $\V=\ell^2(\N)$, $\K=\R$ and 
	\[a(u,v)=\sum_{n=0}^\infty (-1)^n u_n v_n.\]  	
	Let $j$ be the Riesz isomorphism introduced in the proof of Theorem~\ref{th:new4.4}. Then $A:=j^{-1}\circ \A$ is a diagonal operator with merely $1$ and $-1$ in the diagonal. Thus $A$ and obviously $\A$ are clearly invertible. Let 
	$f_n=(0,\cdots, 1,1,0,\cdots)$ where the $1$ is a coordinate for $k=2n$ and $k=2n+1$. Then $\|f_n\|=\sqrt{2}$ and $(f_n)_n$ tends weakly to $0$ as $n\to\infty$. Moreover $a(f_n,f_n)=0$ for all $n$, which shows that $a$ is not essentially coercive. 
\end{exam}	
\begin{rem}
 	Let $\K=\C$.  In \cite{AtKS14} a continuous sesquilinear form $a$ is called \emph{compactly elliptic} if there exists a compact operator $J:\V\to \GH$, where $\GH$ is some Hilbert space and there exists $\alpha>0$ such that 
		\[  \mathop{Re} a(u,u) +\| Ju\|^2_{\GH}\geq \alpha \|u\|_\V^2.  \]
 In view of Theorem~\ref{th:new2}, each compactly elliptic form is essentially coercive. In fact the following holds: the form $a$ is essentially coercive if and only if there exists $\lambda\in \C\setminus\{0\}$ such that $\lambda a$ is compactly elliptic. 
\begin{proof}
If $\lambda a$ is compactly elliptic, then $\lambda a$ is essentially coercive and hence also $a$ is essentially coercive. Conversely, let $a$ be essentially coercive. By Theorem~\ref{th:new4.4}, there exists a compact operator ${\mathcal K}:\V\to\V'$ such that the form $b$ defined by 
\[  b(u,v)=a(u,v)+\langle {\mathcal K} u,v\rangle \]
is coercive. By Lemma~\ref{lem:new-4.3} there exist $\lambda\in\C$ of modulus one and $\alpha>0$ such that $\mathop{Re}(\lambda b(u,u))\geq \alpha \|u\|^2_{\V}$ for all $u\in\V$. Now let $j:\V\to \V'$ be the Riesz isomorphism. Then $J:=j^{-1}\circ {\mathcal K}:\V\to\V$ is compact. Choosing $\GH=\V$ we see that  $\lambda b$ is compactly elliptic. It follows from \cite[Proposition~4.4 (b)]{AtKS14} that $\lambda a $ is compactly elliptic. 	
\end{proof}	
  		
\end{rem}

\section{Characterization of the universal Galerkin property}\label{sec:4}

In this section we want to characterize those forms on a Hilbert space for which every Galerkin approximation converges, whatever be the choice of the approximating sequence. 
 
Let $\V$ be a separable,  infinite dimensional separable Hilbert space over $\K=\R$ or $\C$, and let 
 $a:\V\times\V\to \K$ be a continuous sesquilinear form.
 Given $L\in\V'$ we again consider solutions of the problem: 
 \begin{equation}\label{eq:pb5.1}
 \hbox{Find }u\in\V,~a(u,v)=\langle L,v\rangle \mbox{ for all }v\in\V. 
 \end{equation}
 We say that the form $a$ satisfies \emph{uniqueness} if for $u\in\V$, 
 \[  a(u,v)=0 \mbox{ for all }v\in\V\mbox{ implies }u=0. \] 
 We say that (\ref{eq:pb5.1}) is \emph{well-posed} if for all $L\in\V' $ there exists a unique solution $u\in\V$. 
\begin{defn}[Universal Galerkin property]
The sesquilinear and continuous form $a$ has the \emph{universal Galerkin property} if (\ref{eq:pb5.1}) is well-posed and the following holds. Let 
$(\V_n)_{n\in\N}$ be an arbitrary approximating sequence of $\V$ . Then there exist $n_0\in\N$ and $\gamma>0$ such that for each $L\in\V'$ and each $n\geq n_0$, there exists a unique $u_n\in\V_n$ solving 
\[ a(u_n,\chi)=\langle L,\chi\rangle \mbox{ for all }\chi\in\V_n,  \]
and
\[  \|u-u_n\|_{\V} \leq \gamma \dist (u,\V_n)  \mbox{ for all }n\geq n_0, \]
where $u$ is the solution of (\ref{eq:pb5.1}). 
\end{defn}

As recalled in the introduction and in the preceding section, the Lax-Milgram Theorem and C\'ea's Lemma imply the universal Galerkin property if $a$ is coercive. We now show that the weaker notion of essential coercivity also provides a sufficient condition for ensuring the universal Galerkin property, and moreover that it is necessary.   
   
\begin{thm}\label{th:4.1}
The following assertions are equivalent. 
\begin{enumerate}
	\item[(i)]   The form $a$ is essentially coercive and satisfies uniqueness.  
	\item[(ii)] The form $a$ has the universal Galerkin property. 
\end{enumerate}
\end{thm}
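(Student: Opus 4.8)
The plan is to deduce both implications from the Banach--Ne\v cas--Babu\v ska theory of Section~\ref{sec:3}. For $(i)\Rightarrow(ii)$, I would first observe that essential coercivity together with uniqueness makes $\A$ invertible, by Corollary~\ref{cor:2.3}, so that (\ref{eq:pb5.1}) is well-posed. Given now an \emph{arbitrary} approximating sequence $(\V_n)_{n\in\N}$, the crux is to prove an \emph{eventual} discrete inf--sup estimate: there exist $n_0\in\N$ and $\beta>0$ such that $\sup_{v\in\V_n,\,\|v\|_\V=1}|a(u,v)|\geq\beta\|u\|_\V$ for every $u\in\V_n$ and every $n\geq n_0$. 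Once this is available, the shifted sequence $(\V_{n+n_0-1})_{n\in\N}$ is again approximating and satisfies the hypothesis $(\ref{eq:bnb})$ of Proposition~\ref{prop:3.3KatoArendt} for \emph{all} indices; that proposition then supplies the unique discrete solutions and the quasi-optimal bound $\|u-u_n\|_\V\leq(M/\beta)\dist(u,\V_n)$ for $n\geq n_0$, with $M/\beta$ independent of $L$, which is precisely the universal Galerkin property with $\gamma=M/\beta$.

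The eventual inf--sup estimate I would establish by contradiction, and this is the one place essential coercivity enters. If it failed, one could extract $n_k\to\infty$ and $u_k\in\V_{n_k}$ with $\|u_k\|_\V=1$ and $\sup_{v\in\V_{n_k},\,\|v\|_\V=1}|a(u_k,v)|\to0$, so in particular $a(u_k,u_k)\to0$. Passing to a weakly convergent subsequence $u_k\rightharpoonup w$ and, for fixed $v\in\V$, choosing $v_k\in\V_{n_k}$ with $\|v-v_k\|_\V\to0$ (possible since $n_k\to\infty$), the splitting $a(u_k,v)=a(u_k,v_k)+a(u_k,v-v_k)$ shows $a(u_k,v)\to0$; by weak continuity of the bounded functional $u\mapsto a(u,v)$ the same limit equals $a(w,v)$. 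Hence $a(w,\cdot)\equiv0$, uniqueness forces $w=0$, and then $u_k\rightharpoonup0$ with $a(u_k,u_k)\to0$, so essential coercivity gives $\|u_k\|_\V\to0$, contradicting $\|u_k\|_\V=1$.

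For $(ii)\Rightarrow(i)$, uniqueness is immediate, since well-posedness of (\ref{eq:pb5.1}) applied with $L=0$ gives it. To obtain essential coercivity I would argue contrapositively: if $a$ is not essentially coercive I will exhibit an approximating sequence for which the Galerkin approximation cannot converge. Starting from a sequence witnessing the failure of Definition~\ref{def:2.1} and normalizing it, one gets $(w_k)$ with $\|w_k\|_\V=1$, $w_k\rightharpoonup0$ and $a(w_k,w_k)\to0$. Fix an orthonormal basis $(e_m)$ of $\V$, put $F_n=\span\{e_1,\dots,e_n\}$, and use that $\langle w_k,e_i\rangle_\V\to0$ and $a(w_k,e_i)\to0$ for each fixed $i$ to diagonalize: choose a strictly increasing $(k_n)$ with $|\langle w_{k_n},e_i\rangle_\V|$, $|a(w_{k_n},e_i)|$ (for $i\leq n$) and $|a(w_{k_n},w_{k_n})|$ all smaller than $n^{-2}$. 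Replace $w_{k_n}$ by its component $\widetilde w_n$ orthogonal to $F_n$; this changes it by at most $n^{-3/2}$ in norm, so $\|\widetilde w_n\|_\V\to1$, $a(\widetilde w_n,\widetilde w_n)\to0$ and $|a(\widetilde w_n,e_i)|=O(n^{-3/2})$ for $i\leq n$. Set $\V_n:=F_n\oplus\K\widetilde w_n$; since $F_n\subseteq\V_n$ this is an approximating sequence of $\V$. Writing a generic unit vector of $\V_n$ as $v=f+t\widetilde w_n$ with $f=\sum_{i\leq n}c_ie_i\in F_n$, the decomposition is orthogonal, so $\|f\|_\V\leq\|v\|_\V$, $|t|\leq2\|v\|_\V$ for $n$ large, and $\sum_{i\leq n}|c_i|\leq\sqrt n\,\|f\|_\V$; hence $|a(\widetilde w_n,v)|\leq\sqrt n\,(n^{-2}+Mn^{-3/2})\|v\|_\V+2|a(\widetilde w_n,\widetilde w_n)|\,\|v\|_\V\to0$ uniformly over such $v$. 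Thus, since $\widetilde w_n/\|\widetilde w_n\|_\V$ is a unit vector in $\V_n$, the discrete inf--sup constants $\beta_n:=\inf_{u\in\V_n,\|u\|_\V=1}\sup_{v\in\V_n,\|v\|_\V=1}|a(u,v)|$ tend to $0$. But if $a$ had the universal Galerkin property, applying it to this $(\V_n)$ and then replacing it by the shifted approximating sequence $(\V_{n+n_0-1})_{n\in\N}$ produces an approximating sequence for which the Galerkin approximation converges in the sense of Definition~\ref{def:3.1}; Theorem~\ref{th:3.2} then forces $(BNB)$ for that shifted sequence, i.e.\ a uniform lower bound $\beta_n\geq\beta>0$ for $n\geq n_0$ --- contradicting $\beta_n\to0$. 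Hence $a$ is essentially coercive.

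The routine part is $(i)\Rightarrow(ii)$ once the weak-compactness argument is in place. The genuinely delicate step is the construction in $(ii)\Rightarrow(i)$: the enrichment directions $\widetilde w_n$ must be made small \emph{relative to the growing dimension} $n$ of $F_n$ --- the balancing of the $n^{-3/2}$ perturbation against the $\sqrt n$ loss from the $\ell^1$--$\ell^2$ estimate --- so that $a(\widetilde w_n,\cdot)$ becomes asymptotically negligible on all of $\V_n$ while $(\V_n)$ stays approximating; and one must remember to shift the index in order to invoke Theorem~\ref{th:3.2}, which is phrased for the discrete BNB condition holding at every index rather than eventually.
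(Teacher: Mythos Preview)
Your argument for $(i)\Rightarrow(ii)$ is essentially the paper's own: both extract a normalized sequence violating the eventual discrete inf--sup bound, identify its weak limit as $0$ via uniqueness, and contradict essential coercivity. The only cosmetic difference is that you quote Corollary~\ref{cor:2.3} for well-posedness up front, whereas the paper lets Proposition~\ref{prop:3.3} do double duty.

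For $(ii)\Rightarrow(i)$ you take a genuinely different route. The paper first applies the universal Galerkin property to the \emph{standard} sequence $\V_n=\span\{e_1,\dots,e_n\}$ to obtain the Ritz projections $Q_n$, and then invokes the characterization of Theorem~\ref{th:new2}: since $a$ is assumed not essentially coercive, for each $n$ the finite-rank operator $u\mapsto(P_nu,Q_nu)$ cannot make $a$ coercive, producing $u_n$ with $\|u_n\|=1$ and $|a(u_n,u_n)|+\|P_nu_n\|^2+\|Q_nu_n\|^2<(n+2)^{-2}$. The Galerkin orthogonality $a(u_n-Q_nu_n,w)=0$ then gives $|a(u_n,w)|=|a(Q_nu_n,w)|\leq M\|Q_nu_n\|$ uniformly for $\|w\|\leq 1$ in $\V_n$, so the enriched spaces $\tilde\V_n=\V_n+\K u_n$ violate (BNB) with no rate-balancing needed. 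Your construction bypasses both the Ritz projection and Theorem~\ref{th:new2}: you work directly from Definition~\ref{def:2.1}, diagonalize so that $|a(w_{k_n},e_i)|<n^{-2}$ for $i\leq n$, and then control $|a(\tilde w_n,f)|$ coordinatewise via the $\ell^1$--$\ell^2$ bound $\sum_{i\leq n}|c_i|\leq\sqrt n\,\|f\|$. The price is the explicit balancing of $\sqrt n$ against $n^{-3/2}$, which you carry out correctly; the gain is that your argument is self-contained and does not feed the hypothesis $(ii)$ back into an auxiliary sequence to manufacture $Q_n$. Both approaches yield the same enriched approximating sequence idea, and both are correct.
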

\begin{proof}
$(i)\Rightarrow (ii)$: let $(\V_n)_{n\in\N}$ be an approximating sequence in $\V$. 	By Theorem~\ref{th:3.2} it suffices to show that  there exist $\beta>0$ and $n_0\in\N$ such that 
\begin{equation}\label{eq:4.2}
\sup_{v\in\V_n,\|v\|_\V=1}|a(u,v)|\geq \beta\|u\|_\V \mbox{ for all }u\in\V_n,n\geq n_0.
\end{equation} 
Assume that (\ref{eq:4.2}) is false. We then find a subsequence $(n_k)_{k\in\N}$ and $u_{n_k}\in\V_{n_k}$ such that $\|u_{n_k}\|_\V=1$ and \[\sup_{v\in\V_{n_k},\|v\|_\V=1}|a(u_{n_k},v)|<\frac{1}{k} \mbox{ for all }k\in\N.\] 
We may assume that $(u_{n_k})_k$ converges weakly to $u$ taking a further subsequence otherwise. Let $v\in\V$. Then there exist $v_k\in\V_{n_k}$ such that $\lim_{k\to\infty}\|v-v_k\|_{\V}=0$. Thus
 \[ a(u,v)=\lim_{k\to\infty}a(u_{n_k},v_k)=0.\]
 It follows from the uniqueness assumption that $u=0$. Thus $(u_{n_k})_k$  converges weakly to $0$, $\lim_{k\to\infty}a(u_{n_k},u_{n_k}) =0$, but $\|u_{n_k}\|_\V=1$ for all $k$. Therefore the form $a$ is not essentially coercive. \\
 $(ii)\Rightarrow (i)$: the uniqueness condition is part of $(ii)$. It remains to show that $a$ is essentially coercive.   Let $(e_n)_{n\in\N}$ be an orthonormal basis of $\V$ and $\V_n:=\span \{e_1,\cdots,e_n\}$. By our assumption, there exist $2\leq n_0\in\N$ and  for all $n\geq n_0$ an operator $Q_n:\V\to\V_n$ such that
 \[ a(Q_nu,\chi)=a(u,\chi)\mbox{ for all }\chi\in\V_n\quad (n\geq n_0).    \]    
 Denote by $P_n:\V\to \V_n$ the orthogonal projection. Define the operator 
 \[  J_n:\V\to \V\times\V  \]
 by  
 \[ J_nu=(P_nu,Q_nu),\quad n\geq n_0.   \]
 Now assume that  $a$ is not essentially coercive. 
 Then it follows from Theorem~\ref{th:new2} that 
  for all $n\geq n_0$ we find $u_n\in\V$ such that $\|u_n\|_\V=1$ and 
 \[ |a(u_n,u_n)|+\|P_nu_n\|_{\V}^2 +\|Q_nu_n\|_\V^2<\frac{1}{(n+2)^2}.    \]
 In particular $\|P_n u_n\|_\V<\frac{1}{(n+2)^2}$. This implies that $u_n\not\in {\V_n}$. Let $\tilde{\V}_n=\span\{  \V_n\cup\{u_n\}  \}$. Then $(\V_n)_{n\geq n_0}$ and $(\tilde{\V}_n)_{n\geq n_0}$ are both approximating sequences.
 Let $n\geq n_0$ and let  $v\in \tilde{\V}_n$ be arbitrary with unit norm. There exist a unique $w_1\in\V_n$ and $\lambda\in\K$ such that 
 \[  v=w_1+\lambda u_n=w+\lambda(u_n-P_nu_n),    \]
 where $w:=w_1+\lambda P_n u_n\in\V_n$. Thus 
 \[  1=\|v\|^2_\V=\|w\|_\V^2 +|\lambda|^2\|u_n-P_n u_n\|^2_\V.    \]  
 Consequently $\|w\|^2_\V\leq 1$ and, since  $\|P_nu_n\|_{\V}<\frac{1}{2}$, it follows that   \[\|u_n-P_nu_n\|_\V\geq \frac{1}{2}, \] 
 which implies that  $|\lambda|^2\leq 4$,  i.e.  $|\lambda|\leq 2$. 
 
 Observe that the definition of $Q_n$ implies that  $a(u_n-Q_nu_n,w)=0$. Hence
 \begin{eqnarray*}
 |a(u_n,v)| & = & |a(u_n,w)+\lambda a(u_n,u_n-P_nu_n)|\\
  & = & |a(u_n-Q_nu_n,w)+a(Q_nu_n,w)+\lambda a(u_n,u_n-P_nu_n)|\\
  & \leq & |a(Q_nu_n,w)|+2|a(u_n,u_n)|+2|a(u_n,P_nu_n)|\\
   & \leq & \frac{M}{n+2}+\frac{2}{(n+2)^2}+\frac{2M}{(n+2)^2}.
  \end{eqnarray*}
Consequently 
\[  \lim_{n\to \infty}\sup_{v\in\tilde{V}_n,\|v\|_\V=1}  |a(u_n,v)|   =0. \]
Thus $(BNB)$ is violated for the approximating sequence $(\tilde{\V}_n)_{n\geq n_0}$. But then $(ii)$ does not hold by Theorem~\ref{th:3.2},
which shows that the assumption that $a$ is not essentially coercive is false. 

\end{proof}	
It is obvious that a form $a$ is essentially coercive if and only if its adjoint $a^*$ is essentially coercive.
However, a surprising consequence of Theorem~\ref{th:4.1} is that, for an essentially coercive form, uniqueness 
for the form and uniqueness for its  adjoint are equivalent, as the following corollary shows. 
\begin{cor}
	Let $\V$ be a separable Hilbert space on $\K$ and $a:\V\times\V\to\K$ be a  continuous essentially coercive form. The following assertions are equivalent:
	\begin{enumerate}
		\item[(i)] for all $u\in\V$, $a(u,v)=0$ for all $v\in\V$ implies $u=0$;
		\item[(ii)] for all $v\in\V$, $a(u,v)=0$ for all $u\in\V$ implies $v=0$;
		\item[(iii)] for all $L\in\V'$ there exists $u$ in $\V$ such that 
		$a(u,v)=\langle L ,v\rangle$, for all $v\in\V$;
		\item[(iv)]  for all $L\in\V'$ there exists $v$ in $\V$ such that 
		$a(u,v)=\overline{\langle L ,u\rangle}$, for all $u\in\V$.
	\end{enumerate}
\end{cor}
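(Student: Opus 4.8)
The plan is to collapse all four statements into the invertibility of a single operator and of its adjoint. Introduce the Riesz isomorphism $j:\V\to\V'$ and set $A:=j^{-1}\circ\A\in\mathcal L(\V)$, where $\A\in\mathcal L(\V,\V')$ is attached to $a$ by $\langle\A u,v\rangle=a(u,v)$, so that $a(u,v)=\langle Au,v\rangle_\V$. Then assertion (i) is exactly injectivity of $A$ and (iii) is surjectivity of $A$. A one-line computation gives $\overline{a(u,v)}=\overline{\langle Au,v\rangle_\V}=\langle A^*v,u\rangle_\V$, hence the form $a^*$ is attached to the Hilbert-space adjoint $A^*$; consequently (ii) reads $\ker A^*=\{0\}$ (i.e. $A^*$ injective) and (iv) reads ``$A^*$ surjective''. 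Thus the whole corollary amounts to: $A$ injective $\iff$ $A$ surjective $\iff$ $A^*$ injective $\iff$ $A^*$ surjective.

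First I would record that $a^*$ is essentially coercive whenever $a$ is (already noted just before the statement): if $v_n\rightharpoonup 0$ and $a^*(v_n,v_n)\to 0$, then $a(v_n,v_n)=\overline{a^*(v_n,v_n)}\to 0$, so $\|v_n\|_\V\to 0$ by Definition~\ref{def:2.1}. Then I would apply Corollary~\ref{cor:2.3} to the essentially coercive form $a$ to get the equivalence of (i), (iii), and invertibility of $\A$ (hence of $A$); and apply the same corollary to the essentially coercive form $a^*$ to get the equivalence of (ii), (iv), and invertibility of $A^*$. It then remains only to invoke the elementary fact that a bounded operator on a Hilbert space is invertible if and only if its adjoint is, which closes the cycle and yields the equivalence of all four assertions. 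The only point requiring genuine care is the bookkeeping of the complex conjugation in the definition of $a^*$, needed precisely so that the operator attached to $a^*$ really is $A^*$ and the ``adjoint invertible $\iff$ invertible'' principle applies; no part of this is a serious obstacle, since the real content is already packaged in Corollary~\ref{cor:2.3} and the Fredholm alternative behind it.

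If one prefers to present the corollary as the advertised consequence of Theorem~\ref{th:4.1}, the implication (i)$\Rightarrow$(ii) can instead be argued as follows: by Theorem~\ref{th:4.1}, essential coercivity together with uniqueness yields the universal Galerkin property for $a$. Fixing any approximating sequence $(\V_n)_{n\in\N}$ of $\V$ and passing to the shifted sequence $(\V_{n+n_0})_{n\in\N}$ to absorb the index $n_0$ occurring in that property, one obtains $(BNB)$ (with $\U_n=\V_n$) for every $n$, whence $(BNB^*)$ and the convergence of the dual Galerkin approximation by Theorem~\ref{th:3.2}; in particular the dual problem is well-posed, and well-posedness of the dual problem is exactly (ii) together with (iv). Alternatively one can shortcut this by noting that the universal Galerkin property already asserts that \eqref{eq:pb5.1} is well-posed, i.e. $\A$ is invertible, so this routes through the same operator-theoretic fact as above. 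Either way, the harmless index shift is the only extra bookkeeping, and the substance is Theorem~\ref{th:4.1}/Theorem~\ref{th:3.2} (or Corollary~\ref{cor:2.3}).
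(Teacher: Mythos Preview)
Your proposal is correct, and your second paragraph is essentially the paper's own argument: the paper proves $(i)\Longleftrightarrow(ii)$ via Theorem~\ref{th:4.1} and Theorem~\ref{th:3.2}, and derives the remaining equivalences from Corollary~\ref{cor:2.3}.

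Your first approach, however, is a genuinely different and more direct route. By applying Corollary~\ref{cor:2.3} separately to $a$ and to $a^*$ (the latter being essentially coercive as well), you reduce the whole statement to ``$A$ invertible $\iff$ $A^*$ invertible'', which is elementary operator theory. This bypasses the Galerkin machinery of Theorems~\ref{th:3.2} and~\ref{th:4.1} entirely. The paper, by contrast, deliberately presents $(i)\Leftrightarrow(ii)$ as a \emph{consequence} of the universal Galerkin property and its self-duality, in keeping with the theme that the approximation theory yields structural information about the form. Your shortcut makes clear that this particular corollary does not actually require that detour: once Theorem~\ref{th:new4.4} and the Fredholm alternative (Corollary~\ref{cor:2.3}) are in hand, the equivalence of uniqueness and dual uniqueness is immediate. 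What the paper's route buys is narrative coherence with Section~\ref{sec:4}; what yours buys is economy.
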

\begin{proof}
$(i)\Longleftrightarrow (ii)$: this follows from Theorem~\ref{th:4.1} and Theorem~\ref{th:3.2}. The other equivalences follow from Corollary~\ref{cor:2.3}. 	
\end{proof}	
\section{The Aubin-Nitsche trick revisited}\label{sec:5}
  In this section we want to prove  that on suitable Hilbert spaces containing the space $\V$ continuously the approximation speed in the Galerkin approximation can be improved. 
  We refer also to \cite{schatz-wang} for related, but different results in this direction.

  Let $\V$ be a separable Hilbert space over $\K=\R$ or $\C$, and $a:\V\times\V\to\K$ a sesquilinear form satisfying 
  \[  |a(u,v)|\leq M \|u\|_\V  \|v\|_\V.  \] 
  Let $(\V_n)_{n\in\N}$ be an approximating sequence of $\V$. We assume that (BNB) holds; i.e. there exists $\beta>0$ such that
  \begin{equation}\label{eq:5.1}
  \hbox{For all }n\in\N,~
  \sup_{v\in\V_n,\|v\|_\V=1}|a(u,v)|\geq \beta \|u\|_{\V} \mbox{ for all }u\in\V_n. 
  \end{equation} 
  Given $L\in\V'$ and $n\in\N$, let $u_n\in\V_n$ be the solution of 
  \begin{equation}\label{eq:5.2}
   a(u_n,\chi)=\langle L,\chi \rangle \mbox{ for all }\chi\in\V_n,
   \end{equation}
   and $u\in\V$ the solution of 
   \begin{equation}\label{eq:5.3}
   a(u,v)=\langle L,v\rangle \mbox{ for all }v\in \V.
   \end{equation}
   Note that, by subtracting \eqref{eq:5.3} and \eqref{eq:5.2}, we obtain the following \emph{Galerkin orthogonality}:
   \begin{equation}\label{eq:5.8}
   a(u-u_n,v)=0 \mbox{ for all }v\in \V_n.
   \end{equation}
   We know from Proposition~\ref{prop:3.3}  and Proposition~\ref{prop:3.3KatoArendt} that 
   \begin{equation}\label{eq:5.4}
   \|u-u_n\|_\V\leq \frac{M}{\beta} \dist(u,\V_n)
   \end{equation} 
   for all $n\in\N$. 
   We want to improve this estimate if the given data $L\in\V'$ is in a suitable subspace of $\V'$. 
   
   Let $\X\hookrightarrow \V'$; i.e. $\X$ is a Banach space such that $\X\subset \V'$ and 
   \[  \|f\|_\X\leq c_\X \|f\|_{\V'}  \] 
   for all $f\in\X$ and some $c_\X>0$. We define for $n\in\N$   
 
   \begin{equation}\label{eq:5.5}
   \gamma_n(\X):=\sup_{f\in\X,\|f\|_\X =1} \dist ({\A}^{-1}f,\V_n),
   \end{equation}
   where the distance is taken in $\V$. 
   Thus 
   \begin{equation}\label{eq:5.6}
   \dist (w,\V_n)\leq \gamma_n(\X)\|\A w\|_\X\mbox{ for all }w\in {\A}^{-1}\X,
   \end{equation}
   where $\A:\V\to \V'$ is the isomorphism given by
   \[\langle \A u,v\rangle =a(u,v). \]
   Thus, if $u$ is the solution of (\ref{eq:5.3}) and $u_n$ the approximate solution of (\ref{eq:5.2}), then, if $L\in\X$, we have the estimate 
   \begin{equation}\label{eq:23}
   \|u-u_n\|_\V \leq \frac{M}{\beta} \gamma_n (\X)\|L\|_{\X},
   \end{equation}
   which has the advantage of being uniform for $L$ in the unit ball of $\X$. 
   \begin{rem}
   	Let $Q_n:\X\to\V, L\mapsto u_n$ be the solution operator for (\ref{eq:5.2}). Then (\ref{eq:23}) says that 
   	\[ \|\A^{-1}-Q_n\|_{{\mathcal L}(\X,\V)}\leq \frac{M}{\beta} \gamma_n(\X).   \]
   \end{rem}
We can characterize when $\gamma_n(\X)\to 0$ as $n\to\infty$. 
   \begin{prop}\label{prop:5.1}
   	One has 
   	\[\lim_{n\to \infty}\gamma_n(\X)=0\mbox{ if and only if }\X\hookrightarrow \V'\mbox{ is compact}.\]
   \end{prop}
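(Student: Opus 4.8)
The plan is to unwind the definition of $\gamma_n(\X)$ and recognize the condition $\gamma_n(\X)\to 0$ as a statement about uniform approximability, over a bounded set, of elements of $\V$ by the subspaces $\V_n$. Since $\A:\V\to\V'$ is an isomorphism, the set $\A^{-1}(B_\X)$, where $B_\X$ denotes the closed unit ball of $\X$, is exactly the image under the bounded operator $\A^{-1}$ of $B_\X$; and since $\X\hookrightarrow\V'$, we have $B_\X\subset c_\X B_{\V'}$ up to the embedding, so $\A^{-1}(B_\X)$ is a bounded subset of $\V$. The quantity $\gamma_n(\X)=\sup_{w\in \A^{-1}(B_\X)}\dist(w,\V_n)$ therefore measures how well the single fixed family $(\V_n)$ approximates, \emph{uniformly}, the bounded set $S:=\A^{-1}(B_\X)\subset\V$.

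First I would prove the implication ``$\X\hookrightarrow\V'$ compact $\Rightarrow$ $\gamma_n(\X)\to 0$''. Compactness of the embedding $\X\hookrightarrow\V'$ means that $B_\X$ is relatively compact in $\V'$; applying the isomorphism $\A^{-1}$, the set $S=\A^{-1}(B_\X)$ is then relatively compact in $\V$, hence so is its closure $\overline{S}$. For a relatively compact set, uniform approximation by an approximating sequence is automatic: given $\varepsilon>0$, cover $\overline S$ by finitely many balls $B(w_1,\varepsilon),\dots,B(w_p,\varepsilon)$; since $(\V_n)$ is an approximating sequence of $\V$, choose $N$ so large that $\dist(w_j,\V_N)<\varepsilon$ for $j=1,\dots,p$, and then for $n\geq N$ every $w\in S$ lies within $\varepsilon$ of some $w_j$, hence within $2\varepsilon$ of $\V_n$. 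Therefore $\gamma_n(\X)\leq 2\varepsilon$ for $n\geq N$, which gives $\gamma_n(\X)\to 0$.

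For the converse, ``$\gamma_n(\X)\to 0$ $\Rightarrow$ $\X\hookrightarrow\V'$ compact'', it is equivalent (transporting through the isomorphism $\A^{-1}$, which is a homeomorphism) to show that $S=\A^{-1}(B_\X)$ is relatively compact in $\V$. The idea is that $S$ is approximated uniformly well by finite-dimensional subspaces: fix $\varepsilon>0$ and pick $n$ with $\gamma_n(\X)<\varepsilon$, so $S$ lies in the $\varepsilon$-neighbourhood of the finite-dimensional space $\V_n$; consequently $P_nS$, where $P_n$ is the orthogonal projection onto $\V_n$, is a bounded subset of the finite-dimensional space $\V_n$, hence relatively compact, and $S$ lies within $\varepsilon$ of $P_nS$ (indeed $\|w-P_nw\|=\dist(w,\V_n)\leq\gamma_n(\X)<\varepsilon$ for $w\in S$, using that $\V$ is Hilbert so the metric projection realizes the distance). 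Thus $S$ is totally bounded — for each $\varepsilon$ it has a finite $\varepsilon$-net obtained from a finite $\varepsilon$-net of the relatively compact set $P_nS$ — and total boundedness in the complete space $\V$ gives relative compactness. Pulling back, $B_\X$ is relatively compact in $\V'$, i.e. the embedding is compact.

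The one point requiring slight care — though it is not deep — is the use of the Hilbert structure to guarantee that $\dist(w,\V_n)$ is attained by the orthogonal projection $P_n w$, so that ``$S$ is within $\varepsilon$ of $\V_n$'' upgrades to ``$S$ is within $\varepsilon$ of the relatively compact set $P_n S$''; in a general Banach space one would instead pick, for each $w$, a near-best approximant and argue that these lie in a bounded subset of $\V_n$, which works just as well since bounded subsets of finite-dimensional spaces are relatively compact. In our Hilbert setting this is immediate. No real obstacle remains; the proof is a clean two-way argument between ``compact embedding'', ``$S$ relatively compact'', and ``$S$ totally bounded / uniformly finite-dimensionally approximable''.
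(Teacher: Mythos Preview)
Your proof is correct and follows essentially the same approach as the paper: both directions hinge on the set $S=\A^{-1}(B_\X)$ and the orthogonal projections $P_n$ onto $\V_n$. For the converse, the paper phrases the argument slightly more concisely in operator-theoretic terms---writing $\gamma_n(\X)=\|\A^{-1}j-P_n\A^{-1}j\|_{{\mathcal L}(\X,\V)}$ and invoking that a norm limit of finite rank operators is compact---whereas you argue total boundedness of $S$ directly; these are equivalent formulations of the same idea, and your more hands-on version has the minor advantage of making explicit why the Hilbert structure (or any metric projection) suffices.
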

\begin{proof}
Denote by $P_n:\V\to\V_n$ the orthogonal projection onto $\V_n$. Then 
\[\gamma_n(\X)=\sup_{f\in\X,\|f\|_\X =1}\| {\A}^{-1}f-P_n{\A}^{-1}f\|_{\V}=\|{\A}^{-1}\circ j-P_n  {\A}^{-1}\circ j\|_{{\mathcal L}(\X,\V)},  \]
where $j:\X\to \V'$ is the canonical injection. If $j$ is compact, then $K:={\A}^{-1}\circ j(B_\X)$, where $B_\X$ is the unit ball of $\X$, is relatively compact in $\V$. Now, $P_n$ converges strongly to the identity of $\V$. Since $\|P_n\|\leq 1$, this convergence is uniform on compact subsets of $\X$. This shows that $\gamma_n(\X)\to 0$ as $n\to\infty$. 

Conversely, if $\gamma_n(\X)\to 0$, then ${\A}^{-1}\circ j$ is compact as limit of finite rank operators. Then also $j$ is compact.        
\end{proof}
Similarly, we define 
\[\gamma_n^*(\X):=\sup_{f\in\X,\|f\|_\X =1}\dist ( {\A}^{*-1}f,\V_n ), \]
where $\A^*:\V\to\V^*$ is given by 
\[   \langle \A^* u,v\rangle =a^* (u,v):=\overline{a(u,v)}.   \] 
As before we have  $\gamma_n^*(\X)$  defined as $\gamma_n(\X)$ but with $a$ replaced by the adjoint form $a^*$ of $a$.  Thus we have for all 
$w\in{\A}^{*-1}\X$, 
\begin{equation}\label{eq:n2.7}
\dist(w,\V_n)\leq \gamma_n^*(\X)\|\A^*w\|.
\end{equation}
Now we apply the Aubin--Nitsche trick in the following proof. In contrast to the literature    \cite{EG04} we allow non-selfadjoint forms and also let $L\in\X$ where $\X\hookrightarrow \V'$ is arbitrary. However, as usual, we fix a Hilbert space $\GH$ such that $\V\hookrightarrow\GH$ with dense range. Thus we have the Gelfand triple

\[    \V \hookrightarrow \GH\hookrightarrow \V'.\]
Now we let $\X\hookrightarrow\V'$ be another Banach space in which we choose the given data $L$, whereas our error estimate is done with respect to the norm of $\GH$. 

\begin{thm}\label{th:n6.2}
Let $L\in\X$ and let $u$ be the solution of (\ref{eq:5.3}), $u_n$ the solution of (\ref{eq:5.2}). Then 
\begin{equation}\label{eq:n24}
\|u-u_n\|_{\GH}\leq \frac{M^2}{\beta}\gamma_n(\X)\gamma_n^*(\GH)\|L\|_{\X},
\end{equation} 
for all $n\in\N$. 
\end{thm}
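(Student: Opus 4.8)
The plan is to run the classical Aubin--Nitsche duality argument, but carefully keeping track of the two different Banach spaces $\X$ and $\GH$ in which the data and the error are measured. First I would fix $L\in\X$, the exact solution $u$ of \eqref{eq:5.3}, and the Galerkin approximation $u_n$ of \eqref{eq:5.2}, and set $e_n:=u-u_n\in\V$. The key is to express the $\GH$-norm of $e_n$ by a duality pairing: since $\V\hookrightarrow\GH\hookrightarrow\V'$ is a Gelfand triple, $\GH$ is a Hilbert space and we may write $\|e_n\|_\GH=\sup\{|\langle g,e_n\rangle_\GH|:g\in\GH,\ \|g\|_\GH=1\}$, where the pairing $\langle\cdot,\cdot\rangle_\GH$ extends to the $\V'$--$\V$ duality. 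For such a $g$, consider the adjoint problem: let $z_g\in\V$ be the unique solution of $a^*(z_g,v)=\overline{\langle g,v\rangle}$ for all $v\in\V$, i.e. $\A^*z_g=\bar g$ (well-posedness of the adjoint problem follows from Corollary~\ref{cor:2.3}, or directly from invertibility of $\A$). Equivalently $a(v,z_g)=\langle g,v\rangle_\GH$ for all $v\in\V$, so in particular, taking $v=e_n$, we get $\langle g,e_n\rangle_\GH=a(e_n,z_g)$.

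Next I would invoke Galerkin orthogonality \eqref{eq:5.8}: $a(e_n,\chi)=0$ for all $\chi\in\V_n$. Hence for any $\chi\in\V_n$,
\[
\langle g,e_n\rangle_\GH=a(e_n,z_g)=a(e_n,z_g-\chi)\le M\,\|e_n\|_\V\,\|z_g-\chi\|_\V,
\]
and taking the infimum over $\chi\in\V_n$ yields $|\langle g,e_n\rangle_\GH|\le M\,\|e_n\|_\V\,\dist(z_g,\V_n)$. Now $z_g=\A^{*-1}\bar g$ with $\bar g\in\GH\hookrightarrow\V'$ of $\GH$-norm one, so by the very definition of $\gamma_n^*(\GH)$ in the analogue of \eqref{eq:5.5}--\eqref{eq:n2.7} we get $\dist(z_g,\V_n)\le\gamma_n^*(\GH)\,\|g\|_\GH=\gamma_n^*(\GH)$. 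Taking the supremum over $\|g\|_\GH=1$ gives $\|e_n\|_\GH\le M\,\gamma_n^*(\GH)\,\|e_n\|_\V$.

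Finally I would bound $\|e_n\|_\V$ by the primal estimate already available: by \eqref{eq:23} (which is just \eqref{eq:5.4} combined with \eqref{eq:5.6} applied to $u=\A^{-1}L$ with $L\in\X$), $\|u-u_n\|_\V\le\frac{M}{\beta}\gamma_n(\X)\|L\|_\X$. Substituting this into the previous display produces exactly
\[
\|u-u_n\|_\GH\le\frac{M^2}{\beta}\,\gamma_n(\X)\,\gamma_n^*(\GH)\,\|L\|_\X,
\]
which is \eqref{eq:n24}. I do not expect a serious obstacle here; the only points needing care are (a) checking that the adjoint problem is genuinely well-posed so that $z_g$ exists and is unique — this is where the essential-coercivity machinery of Section~\ref{sec:4} (or simply invertibility of $\A$, which forces invertibility of $\A^*$) is used — and (b) being consistent about the identification of pairings in the Gelfand triple, so that "$a(v,z_g)=\langle g,v\rangle_\GH$ for all $v\in\V$'' is legitimate and so that $\bar g$, a priori an element of $\GH\subset\V'$, is an admissible right-hand side for the functional $\gamma_n^*(\GH)$. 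Both are routine once the setup is spelled out.
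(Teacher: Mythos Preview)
Your proof is correct and follows essentially the same Aubin--Nitsche duality argument as the paper. The only cosmetic difference is that the paper avoids the supremum over $g\in\GH$ by directly choosing the auxiliary right-hand side $g=u-u_n$, so that $\|u-u_n\|_{\GH}^2=a^*(w,u-u_n)$ and one divides by $\|u-u_n\|_\GH$ at the end; your version and theirs yield the same constant $M^2/\beta$.
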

\begin{proof}
Let $n\in\N$. Then, on the footsteps of  Aubin--Nitsche, we consider the solution $w\in\V$ of 
\begin{equation}\label{eq:n25}
a^*(w,v)=\langle u-u_n,v\rangle_{\GH}\hspace{0,5cm} (v\in\V).
\end{equation}  
Then, by (\ref{eq:n25}), for any $\chi\in\V_n$, 
\begin{eqnarray*}
	\|u-u_n\|_{\GH}^2 & = & \langle u-u_n,u-u_n\rangle_{\GH}= a^*(w,u-u_n)=\overline{a(u-u_n,w)}\\
	                            & = & \overline{a(u-u_n,w-\chi)}\leq M\|u-u_n\|_\V \|w-\chi\|_\V   
\end{eqnarray*}	
where in the last identity we used the Galerkin orthogonality (\ref{eq:5.8}).  	

Since $\chi\in\V_n$ is arbitrary, this implies that 
\[  \|u-u_n\|_{\GH}^2  \leq M\|u-u_n\|_\V \dist(w,\V_n).\]
Now we use (\ref{eq:23}) and (\ref{eq:n2.7})  	 to deduce 
\[  \|u-u_n\|_{\GH}^2\leq M.\frac{M}{\beta}\gamma_n(\X)\|L\|_\X\gamma_n^*(\GH)\|u-u_n\|_{\GH}.   \]
Consequently, we obtain
\[\|u-u_n\|_{\GH}\leq \frac{M^2}{\beta} \gamma_n(\X)\gamma_n^*(\GH) \| L \|_{\X}.\]	
	
\end{proof}	
	
\section{Applications}

\subsection{Selfadjoint positive operators with compact resolvent}

As an illustration, we apply Theorem~\ref{th:n6.2} 
  to selfadjoint positive operators with compact resolvent. Let $\V,\GH$ be infinite dimensional, separable Hilbert spaces over $\K=\R$ or $\C$ such that $\V$ is compactly injected in $\GH$ and dense in $\GH$. Thus we have the Gelfand triple
 \[ \V\hookrightarrow \GH\hookrightarrow \V' . \]       
 Let $a:\V\times\V\to \K$ be continuous, symmetric and coercive. Then the operator $\A:\V\to\V'$ given by 
 \[  \langle \A u,v\rangle =a(u,v)\] is invertible. Moreover, there exist an orthonormal basis $(e_n)_{n\geq 0}$ of $\GH$ and $\lambda_n\in\R$ such that 
 \[ 0<\lambda_0\leq \lambda_1\leq \cdots ,\lim_{n\to\infty}\lambda_n=\infty   \]
 and 
 \[  \V=\{ u\in\GH :\sum_{n=0}^\infty \lambda_n |\langle u,e_n\rangle_\GH |^2<\infty \}   \]
 (see e.g. \cite[Satz 4.49]{AU19})   and 
 \[a(u,v)=   \sum_{n=0}^\infty  \lambda_n \langle u,e_n\rangle_\GH \langle e_n,v\rangle_\GH .   \]
 Passing to an equivalent scalar product we may and will assume that 
 \[  \langle u,v\rangle_\V    =a(u,v)\;\;\;\; (u,v\in\ V).  \] 
 Thus $|a(u,v)|\leq \|u\|_{\V}  \|v\|_{\V}$ and $\sup_{  \|v\|_\V= 1}  |a(u,v)|=\|u\|_\V$; i.e. we have $M=\beta=1$ in the above estimates.
 
 Consider $\V_n=\span \{ e_0,\cdots,e_{n-1}\},n=1,2,\cdots$. Then $(\V_n)_{n\in\N}$ is an approximating sequence of $\V$. We define for $s\in[-1,1]$ 
 \[  \V_s:=\{   f\in\V':     \sum_{n\geq 0}  \lambda_n^s |\langle f,e_n\rangle_\GH |^2<\infty        \},  \]
 which is a Hilbert space for the norm 
 \[ \|f\|^2_{\V_s} =\sum_{n\geq 0}\lambda_n^s  |\langle f,e_n\rangle_\GH|^2.    \]
 Then it is easy to see that $\V_{-1}=\V'$, $\V_0=\GH$, $\V_1=\V$ with identity of the norms. Morever, for $s\in (0,1)$, 
 \[    \V_s =(\V_0,\V_1)_s \]
 (the complex interpolation space) and for $s\in (-1,0)$, 
 \[  \V_s =(\V_{0},\V_{-1})_{-s}. \]  
 \begin{lem}
 	One has for $s\in [-1,1]$, 
 	\[   \gamma_n (\V_s)=|\lambda_n|^{-(1+s)/2}\;\;\;\; (n=1,2,\cdots).  \]
 	In particular,
 	\[\gamma_n (\GH)=|\lambda_n|^{-1/2}.   \] 
 \end{lem}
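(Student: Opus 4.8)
The plan is to carry out everything in coordinates with respect to the orthonormal basis $(e_k)_{k\geq 0}$ of $\GH$. For $f\in\V'$ write $f_k:=\langle f,e_k\rangle$ for its (duality) coordinates, which for $f\in\GH$ are the usual Fourier coefficients $\langle f,e_k\rangle_\GH$, so that $\|f\|_{\V_s}^2=\sum_{k\geq 0}\lambda_k^s|f_k|^2$. Two elementary observations reduce the lemma to a one-line optimisation. First, since $\langle u,v\rangle_\V=a(u,v)=\sum_{k\geq 0}\lambda_k\langle u,e_k\rangle_\GH\langle e_k,v\rangle_\GH$, the vectors $e_k$ are pairwise orthogonal also in $\V$, with $\|e_k\|_\V^2=\lambda_k$. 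Hence $\{e_0,\dots,e_{n-1}\}$ is a $\V$-orthogonal basis of $\V_n$, the orthogonal projection $P_n\colon\V\to\V_n$ is truncation of the $\GH$-expansion, and
\[\dist(w,\V_n)^2=\|w-P_nw\|_\V^2=\sum_{k\geq n}\lambda_k\,|\langle w,e_k\rangle_\GH|^2\qquad(w\in\V).\]
Second, for $f\in\V'$ the element $w=\A^{-1}f\in\V$ satisfies $a(w,e_k)=\langle f,e_k\rangle$, i.e. $\lambda_k\langle w,e_k\rangle_\GH=f_k$, so $\langle w,e_k\rangle_\GH=f_k/\lambda_k$.

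Combining these, for $f\in\V_s$ one obtains
\[\dist(\A^{-1}f,\V_n)^2=\sum_{k\geq n}\lambda_k\Bigl|\tfrac{f_k}{\lambda_k}\Bigr|^2=\sum_{k\geq n}\lambda_k^{-(1+s)}\bigl(\lambda_k^s|f_k|^2\bigr).\]
Setting $t_k:=\lambda_k^s|f_k|^2\geq 0$, the condition $\|f\|_{\V_s}=1$ reads $\sum_{k\geq 0}t_k=1$, and $\gamma_n(\V_s)^2$ is exactly the supremum of the linear functional $t\mapsto\sum_{k\geq n}\lambda_k^{-(1+s)}t_k$ over the probability simplex in $\ell^1$. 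Such a supremum equals the supremum of the coefficients involved, namely $\sup_{k\geq n}\lambda_k^{-(1+s)}$: the upper bound is immediate since $\sum_k t_k=1$, and it is approached (here attained) at the unit vectors $t=\delta^{(k)}$.

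It then remains to note that $1+s\geq 0$ and $0<\lambda_n\leq\lambda_{n+1}\leq\cdots$ force the sequence $k\mapsto\lambda_k^{-(1+s)}$ to be nonincreasing for $k\geq n$, whence $\sup_{k\geq n}\lambda_k^{-(1+s)}=\lambda_n^{-(1+s)}$; this value is realised by $f=\lambda_n^{-s/2}e_n$, which has unit $\V_s$-norm and satisfies $\dist(\A^{-1}f,\V_n)^2=\lambda_n^{-(1+s)}$. Therefore $\gamma_n(\V_s)=\lambda_n^{-(1+s)/2}$, and taking $s=0$ gives $\gamma_n(\GH)=\lambda_n^{-1/2}$. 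I do not expect any genuine obstacle; the two points that need a little care are the index bookkeeping — because $\V_n$ stops at $e_{n-1}$, the first term surviving in the distance formula is $k=n$, which is why $\lambda_n$ rather than $\lambda_{n-1}$ appears — and the justification that the supremum over the $\ell^1$-simplex of a coefficientwise-nonnegative linear functional equals the supremum of its coefficients.
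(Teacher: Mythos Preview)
Your proof is correct and follows essentially the same approach as the paper: both identify the $\V$-orthogonal projection onto $\V_n$ as truncation of the $\GH$-expansion, express $\A^{-1}f$ coordinatewise via $\langle \A^{-1}f,e_k\rangle_\GH=f_k/\lambda_k$, and then bound the resulting weighted sum using the monotonicity of $(\lambda_k)$ together with $1+s\geq 0$, with the lower bound coming from $f$ proportional to $e_n$. Your reformulation of the upper bound as a linear optimisation over the $\ell^1$-simplex is a tidy way of phrasing what the paper does by direct inspection of the ratio, but the underlying argument is the same.
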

\begin{proof}
Let $\widehat{e_n}=\frac{1}{\sqrt{\lambda_n}}e_n$. Then  $(\widehat{e_n})_{n\geq 0}$ is an orthonormal basis of $\V$. For $u\in\V$,
\[   \langle u,\widehat{e_k}\rangle_{\V}= \sum_{n\geq 0}  \lambda_n \langle u,e_n\rangle_{\GH} \langle e_n,\widehat{e_k}\rangle_{\GH}=\sqrt{\lambda_k} \langle u,{e_k}\rangle_{\GH} \]  
Thus 
\[   P_n u=\sum_{k=0}^{n-1}  \langle u,\widehat{e_k}\rangle_{\V} \; \widehat{e_k}=  \sum_{k=0}^{n-1}  \langle u,{e_k}\rangle_{\GH} \;{e_k}\]
defines the  orthogonal projection of $\V$ onto $\V_n$. Moreover, in $\V$ one has 
\[  \dist(u,\V_n)^2 =\|u-P_nu\|_{\V}^2=\sum_{k\geq n}\lambda_k | \langle u,e_k \rangle_{\GH}|^2.  \]    
Let $f\in \V_s$, $u={\A}^{-1}f$. Then 
\[  \langle f,e_k\rangle_{\GH}=\langle \A u,e_k\rangle_{\GH}=\lambda_k\langle u,e_k\rangle_{\GH}.  \]
Thus 
\begin{eqnarray*}
{\gamma_n({\V}_s)}^2 & = &\sup_{\A u=f}\frac{\|u-P_n u\|_{\V}^2}{\|f\|_{\V_s}^2}	=
\sup_{f\in\V_s, \A u=f} \frac{ \sum_{k\geq n}\lambda_k |\langle u,e_k\rangle_{\GH}|^2}{\sum_{k\geq 0}\lambda_k^s |\langle f,e_k\rangle_{\GH}|^2}    \\
 & = & \sup_{f\in\V_s} \frac{ \sum_{k\geq n}\lambda_k^{-1} |\langle f,e_k\rangle_{\GH}|^2}{\sum_{k\geq 0}\lambda_k^s |\langle f,e_k\rangle_{\GH}|^2} =
  \sup_{f\in\V_s} \frac{ \sum_{k\geq n}\lambda_k^{-1-s}\lambda_k^s |\langle f,e_k\rangle_{\GH}|^2}{\sum_{k\geq 0}\lambda_k^s |\langle f,e_k\rangle_{\GH}|^2}  \\
   & \leq & \lambda_n^{-1-s}
\end{eqnarray*}
since $(\lambda_k)_{k\geq 0}$ is increasing. 

Taking $f=e_n$, one sees that $\gamma_n(\V_s)^2\geq \frac{\lambda_n^{-1}}{\lambda_n^s}=\lambda_n^{-s-1}$. 
\end{proof}

Now let $f\in\V_s$, where $-1\leq s\leq 1$ and let $u=\A^{-1}f$. Let $u_n\in\V$ such that 
\[  a(u_n,\chi)=\langle f,\chi\rangle  \;\;\;\; (\chi\in\V_n),  \]
i.e. $u_n$ is the approximate solution. Then by Theorem~\ref{th:n6.2}
\[   \|u-u_n\|_{\GH}\leq \gamma_n(\X)\gamma_n(\GH)  \|f\|_{\V_s}=|\lambda_n|^{-(1+s)/2}|\lambda_n|^{-1/2}\|f\|_{\V_s}. \]  
Thus we obtain the following error estimate
\vspace{0,2cm}
\begin{equation}\label{eq:n26b}
\|u-u_n\|_{\GH}\leq |\lambda|^{-1-s/2}\|f\|_{\V_s}.
\end{equation}
\vspace{0,2cm}

\begin{rem}
	In this special case one can compute the error directly. In fact $u=\sum_{k=0}^\infty \frac{1}{\lambda_k}\langle f,e_k\rangle_{\GH}e_k$ and $u_n=\sum_{k=0}^{n-1}\langle f,e_k\rangle_{\GH}e_k$. Thus 
	\[  \|u-u_n\|_{\GH}^2=\sum_{k=n}^\infty \frac{1}{\lambda_k^2}  |\langle f,e_k\rangle_{\GH}|^2=  \sum_{k=n}^\infty \lambda_k^{-2-s}\lambda_k^s  |\langle f,e_k\rangle_{\GH}|^2\leq \lambda_n^{-2-s}\|f\|^2_{\V_s},  \]
	which is exactly the estimate (\ref{eq:n26b}). This means that Theorem~\ref{th:n6.2} gives the best possible estimate of the error.   
\end{rem}

Let us provide an example of application of (\ref{eq:n26b}). Let $\K=\C$, $\GH=L^2(0,2\pi)$ with norm $\|u\|_{\GH}^2=\frac{1}{2\pi}\int_{0}^{2\pi}|u(t)|^2dt$. Let $\V=\{ u\in H^1(0,2\pi) :u(0)=u(2\pi)\}$ with norm 
\[  \|u\|_{\V}^2=\frac{1}{2\pi}\int_0^{2\pi}|u'(t)|^2dt+ \frac{1}{2\pi} \int_0^{2\pi} |u(t)|^2dt. \]	
Then the injection $\V\hookrightarrow \GH$ is compact. Let $a:\V\times\V\to\C$ be given by 
\[  a(u,v)=\frac{1}{2\pi} \int_0^{2\pi} u'(t)\overline{v'(t)}dt + \frac{1}{2\pi}\int_0^{2\pi} u(t)\overline{v(t)}dt. \] 
Let $f\in L^2(0,2\pi)$. Then there exists a unique $u\in H^2(0,2\pi)$ such that 
\[    u-u''=f,\;\;\; u(0)=u(2\pi), \;\;\; u'(0)=u'(2\pi).   \]
In fact, $u$ is the unique element of $\V$ such that $a(u,v)=\langle f,v\rangle$ for all $v\in\V$. 

For $u\in\GH$, let $\widehat{u}(k)=\frac{1}{2\pi} \int_0^{2\pi}u(t)e^{-ikt}dt$ be the $k$-th Fourier coefficient. Then 
\[   a(u,v)=\sum_{k\in\Z}  (1+k^2)\widehat{u}(k)\overline{\widehat{v}(k)}. \] 
Let $e_k(t)=e^{ikt},t\in(0,2\pi)$. Then $(e_k)_{k\in\Z}$ is an orthonormal basis of $\GH$ and $\widehat{u}(k)=\langle u,e_k\rangle_{\GH}$. Let $\V_n=\span \{e_k:|k|< n\}$ and let $u_n$ be the approximate solution i.e. 
\[ a(u_n,\chi) =\langle f,\chi\rangle_{\GH} \;\;\; (\chi\in\V_n).\]
Then our estimate shows that 
 \[  \|u_n-u\|_{L^2}\leq  \frac{1}{(1+n^2)^{1/2}}\|f\|_{L^2}.  \]
 Let $0<s\leq 1$ and $\V_s:=\{  u\in L^2(0,2\pi) :\sum_{k\in\Z}(1+k^2)^s    |\widehat{u}(k)|^2<\infty\}$. If $f\in \V_s$, then by (\ref{eq:n26b}), 
 \begin{equation}\label{eq:n27}
\|u-u_n\|_{L^2}\leq (1+n^2)^{-1-s/2}\|f\|_{\V_s}. 
 \end{equation}

 \subsection{Finite elements for the Poisson problem }\label{sec:6}    

  In this section we want to apply our results to show the convergence of a numerical approximation 
  via triangularization for the solution of a Poisson problem where coercivity is violated but essential coercivity   holds. For simplicity we choose $\K=\R$ throughout this section.    Let $\Omega\subset \R^d $ be an open, bounded, convex set and let $a_{ij}:\Omega\to\R$ ($1\leq i,j\leq d$) be Lipschitz continuous functions such that 
  \[a_{ij}=a_{ji}\mbox{ and }\sum_{i,j=1}^d a_{ij}(x)\xi_i\xi_j\geq \alpha |\xi|^2\quad (\xi\in\R^d)\]
  for all $x\in \Omega$, where $\alpha>0$. 
  Moreover, let $b_j,c_j\in W^{1,\infty}(\Omega)$ for $j=1,\cdots, d$ and $b_0\in L^\infty(\Omega)$. We consider the operator $A$ given by 
  \[ Au:=-\sum_{i,j=1}^d D_i(a_{ij}D_ju) +\sum_{j=1}^d b_jD_ju - \sum_{j=1}^d D_j(c_ju) + b_0u \quad (u\in H^2(\Omega)).  \]
Note that $A:H^2(\Omega)\to L^2(\Omega)$ is linear and continuous. 

Our aim is to study the Poisson equation
\begin{equation}\label{eq:5}
Au=f
\end{equation}
where $f\in L^2(\Omega)$ is given and a solution $u\in H_0^1(\Omega) \cap H^2(\Omega)$ is to be determined and calculated by approximation. We will impose the uniqueness condition
\begin{equation}\label{eq:6}
\hbox{For all }u\in H_0^1(\Omega)\cap H^2(\Omega),~ Au=0\mbox{ implies }u=0.
\end{equation}
We use the continuous, coercive form
\[   a_0:H^1_0(\Omega)\times H^1_0(\Omega)\to \R   \]
given by 
\[  a_0(u,v)=\sum_{i,j=1}^d\int_{\Omega} a_{ij}D_j uD_iv    \]
and also the perturbed form $a$ given by 
\[  a(u,v)=a_0(u,v) +\sum_{j=1}^d\int_{\Omega} (b_jD_ju v+  c_juD_jv) +  \int_\Omega b_0 uv.   \] 
Note that the adjoint form $a^*$ defined by $a^*(u,v)=a(v,u)$ has the same form as $a$. This is the reason why we also consider the coefficients $c_j$. 

 Then the following well posedness result holds.
 \begin{thm}\label{th:2.1}
 	\quad \\
 	\vspace{-0,5cm}
 	\begin{enumerate}
 	\item[i)] The form $a$  is essentially coercive. 	
	\item[ii)] Assume (\ref{eq:6}). Then for each $f\in L^2(\Omega)$ there exists a unique solution $u\in H^1_0(\Omega)\cap H^2(\Omega)$ of (\ref{eq:5}). 
	\end{enumerate}
\end{thm}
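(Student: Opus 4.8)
The plan is to derive (i) from a G\aa rding inequality together with the compactness of the Rellich embedding $H^1_0(\Omega)\hookrightarrow L^2(\Omega)$, and then to obtain (ii) by combining (i) with elliptic regularity on convex domains and Corollary~\ref{cor:2.3}.

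For (i): equip $H^1_0(\Omega)$ with the Dirichlet norm $\|u\|^2=\int_\Omega|\nabla u|^2$, which is equivalent to the $H^1$-norm by the Poincar\'e inequality. Uniform ellipticity of $(a_{ij})$ gives $a_0(u,u)\ge\alpha\|u\|^2$. The lower-order part $a-a_0$ is controlled, by the Cauchy--Schwarz inequality and the $L^\infty$-bounds on $b_j,c_j,b_0$, by $C\|u\|\,\|u\|_{L^2}+C\|u\|_{L^2}^2$; Young's inequality then produces a constant $C'>0$ with $a(u,u)+C'\|u\|_{L^2}^2\ge\frac{\alpha}{2}\|u\|^2$ for all $u\in H^1_0(\Omega)$. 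Since $\K=\R$ we have $|a(u,u)|\ge a(u,u)$, so taking $\GH:=L^2(\Omega)$ and $J:=\sqrt{C'}\,\iota$, with $\iota\colon H^1_0(\Omega)\hookrightarrow L^2(\Omega)$ the inclusion, which is compact by the Rellich--Kondrachov theorem as $\Omega$ is bounded, condition (iii) of Theorem~\ref{th:new2} is satisfied. Hence $a$ is essentially coercive.

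For (ii): given $f\in L^2(\Omega)$, set $\langle L,v\rangle:=\int_\Omega fv$, so $L\in H^1_0(\Omega)'$. The one genuinely external ingredient is that any $u\in H^1_0(\Omega)$ with $a(u,v)=\int_\Omega gv$ for all $v\in H^1_0(\Omega)$ and some $g\in L^2(\Omega)$ actually lies in $H^2(\Omega)$ --- the classical $H^2$-regularity theorem for second-order elliptic operators with Lipschitz coefficients on bounded convex domains (this is the step carrying the convexity hypothesis on $\Omega$) --- after which integration by parts, legitimate since the test functions vanish on $\partial\Omega$ and the products $a_{ij}D_ju$, $c_ju$ lie in $H^1(\Omega)$, gives $a(u,v)=\int_\Omega(Au)v$ for all $v\in H^1_0(\Omega)$; by density of $H^1_0(\Omega)$ in $L^2(\Omega)$ the weak equation is therefore equivalent to $Au=g$ in $L^2(\Omega)$. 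Applied with $g=0$, this shows that the uniqueness property for the form $a$ on $H^1_0(\Omega)$ is precisely hypothesis (\ref{eq:6}). Since $a$ is essentially coercive by (i), Corollary~\ref{cor:2.3} now yields, for every $f\in L^2(\Omega)$, a unique $u\in H^1_0(\Omega)$ with $a(u,v)=\langle L,v\rangle$ for all $v\in H^1_0(\Omega)$; by the regularity just invoked, $u\in H^1_0(\Omega)\cap H^2(\Omega)$ and $Au=f$, and uniqueness within $H^1_0(\Omega)\cap H^2(\Omega)$ is immediate from uniqueness in $H^1_0(\Omega)$ (or directly from (\ref{eq:6})).

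The G\aa rding estimate and the integration by parts are routine; the main obstacle --- and the only non-elementary input --- is the $H^2$ elliptic regularity on convex domains, which must be quoted rather than proved here. Everything else is packaged into Theorem~\ref{th:new2}, the Rellich--Kondrachov theorem, and Corollary~\ref{cor:2.3}.
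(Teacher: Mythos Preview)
Your proof is correct and, for part (ii), follows the paper's argument essentially verbatim: quote $H^2$-regularity on convex domains (the paper does this by first absorbing the lower-order terms into the right-hand side and then citing Kadlec's theorem for the principal part $a_0$, a reduction you skip but could easily insert), use it to identify the uniqueness hypothesis for the form with~(\ref{eq:6}), and conclude by Corollary~\ref{cor:2.3}.

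The only genuine difference is in part (i). You prove a G\aa rding inequality $a(u,u)+C'\|u\|_{L^2}^2\ge\frac{\alpha}{2}\|u\|^2$ and then invoke the characterization Theorem~\ref{th:new2}(iii) with $J=\sqrt{C'}\,\iota$, $\iota\colon H^1_0(\Omega)\hookrightarrow L^2(\Omega)$ compact. The paper instead checks Definition~\ref{def:2.1} directly: if $u_n\rightharpoonup 0$ in $H^1_0(\Omega)$ and $a(u_n,u_n)\to 0$, then $u_n\to 0$ in $L^2(\Omega)$ by Rellich, so each lower-order term in $a(u_n,u_n)$ tends to $0$, hence $a_0(u_n,u_n)\to 0$, and coercivity of $a_0$ gives $\|u_n\|_{H^1}\to 0$. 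Both arguments rest on the same compact embedding; yours packages the PDE content into a single scalar inequality and lets Theorem~\ref{th:new2} do the work, which is perhaps more transparent for readers used to G\aa rding-type arguments, while the paper's version illustrates that the sequential definition can be verified by hand without any preliminary estimate.
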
 

\begin{proof}
a) We first show $H^2$-regularity. Let $u\in H^1_0(\Omega)$, $f\in L^2(\Omega)$ such that $a(u,v)=\int_{\Omega}fv$ for all $v\in H^1_0(\Omega)$. Then $u\in H^2(\Omega)$ and $Au=f$. In fact, let 
\[  g:=f-b_0 u -\sum_{j=1}^d(b_jD_j u -D_j(c_j u)).\]
 Then $g\in L^2(\Omega)$ and $a_0(u,v)=\int_{\Omega}gv$ for all $v\in H^1_0(\Omega)$.   Now it follows from the classical $H^2$-result of Kadlec \cite{kadec} (see \cite[Theorem 3.2.1.2]{G85}) that $u\in H^2(\Omega)$. It clearly follows that $Au=f$.  	\\
 b) We show that $a$ is essentially coercive. Let $u_n\rightharpoonup 0$ as $n\to\infty$ in  $H_0^1(\Omega)$ and $a(u_n,u_n)\to 0$ as $n\to\infty$. Then $D_ju_n\rightharpoonup 0$ as $n\to\infty$ in $L^2(\Omega)$. Since the embedding of $H_0^1(\Omega)$ in $L^2(\Omega)$ is compact, it follows that $u_n\to 0$ in $L^2(\Omega)$. Consequently 
 \[    \int_{\Omega}  b_jD_ju_n.u_n\to 0,\int_\Omega c_j D_j u_n.u_n\to 0\mbox { and }\int_{\Omega} b_0 u_n.u_n\to 0\mbox{ as }n\to\infty. \] 
 Thus also $a_0(u_n,u_n)\to 0$ as $n\to\infty$. Since $a_0$ is coercive this implies $\|u_n\|_{H^1} \to 0$ as $n\to\infty$. \\
 c) The form $a$ satisfies uniqueness. In fact, let $u\in H_0^1(\Omega)$ such that $a(u,v)=0$ for all $v\in H_0^1(\Omega)$. Then $u\in H^2(\Omega)$ by part a) of the proof. Hence $u=0$ by our assumption  (\ref{eq:6}).\\
 d) Let $f\in L^2(\Omega)$. It follows from Corollary~\ref{cor:2.3} that there exists a unique $u\in H_0^1(\Omega)$ such that $a(u,v)=\langle f,v\rangle_{L^2}$ for all $v\in H_0^1(\Omega)$. Now a) implies that $u\in H^2(\Omega)$ and $Au=f$.   
 
\end{proof}	
Concerning the uniqueness property, we make the following remark.    
\begin{rem}[Eigenvalues and uniqueness]
Replace the operator $A$ by $A_\lambda:=A-\lambda\Id$ (i.e. $b_0$ by $b_0-\lambda$) where $\lambda\in\R$. Then there exists  a finite or countable infinite set such that 
\[   \{\lambda:(\ref{eq:6})  \mbox{ is violated for }A_\lambda \}=\{ \lambda_n:n\in\N,n<N\}\] 
where $1<N\leq \infty$ and $\lambda_n\in\R$, $\lim_{n\to \infty}\lambda_n=\infty$ if $N=\infty$.\\
If $b_1=\cdots=b_d=c_1=\cdots=c_d=0$ and $b_0\geq 0$, then $\lambda_n>0$ for all $n\in\N$ and then we are in the coercive case. But in general there will be also negative eigenvalues. 	The uniqueness condition (\ref{eq:6}) for $A$ is equivalent to saying that $\lambda_n\neq 0$ for all $n\in\N$.   
\end{rem}  
 Our final aim is to show that the finite element method yields an approximation of the solution of (\ref{eq:5}). 
 
 For that purpose we assume that $d=2$ and that $\Omega$  is a convex polygon. 
 Let $\{\tau_h\}_{h>0}$ be  a quasi-uniform admissible triangularization of $\Omega$ (see \cite[Definition 9.26]{AU19}). In particular each $\tau_h$ consists of finitely many triangles covering $\Omega$ of outer radius $r_T\leq h$. 
 
 For $h>0$,  we consider the corresponding finite element space $V_h$ (see \cite[Equation (9.35)]{AU19}). Thus $V_h$ consists of those continuous functions on $\overline{\Omega}$ which vanish   at $\partial\Omega$ and are affine on each triangle $T\in\tau_h$. 
 
 The following fundamental estimates are classical (see e.g. \cite[Korollar 9.28]{AU19}) . 
 \begin{prop}\label{prop:2.3}
 There exists a constant $c>0$ such that for all $h\in (0,1)$ and for each $v\in H^2(\Omega)$,
 \begin{equation}\label{eq:8}
 \inf_{\chi\in V_h}\| v-\chi  \|_{H^1 (\Omega)}\leq c h|v|_{H^2(\Omega)},
\end{equation}
where $|v|^2_{H^2(\Omega)}:=\int_{\Omega}(|D_1^2v|^2+2|D_1D_2v|^2 +|D_2v|^2)$. 
 \end{prop}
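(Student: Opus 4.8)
The plan is to run the classical finite-element interpolation argument (Ciarlet, Bramble--Hilbert). First I would introduce the nodal interpolation operator $I_h\colon H^2(\Omega)\to V_h$ defined by $(I_hv)(z)=v(z)$ at every vertex $z$ of $\tau_h$; this makes sense because $d=2$, so $H^2(\Omega)\hookrightarrow C(\overline\Omega)$ by the Sobolev embedding theorem (recall $\Omega$ is a convex polygon, hence Lipschitz), and $I_hv$ is continuous, affine on each triangle and vanishes on $\partial\Omega$, hence $I_hv\in V_h$. Since $\inf_{\chi\in V_h}\|v-\chi\|_{H^1(\Omega)}\le\|v-I_hv\|_{H^1(\Omega)}$ and all the (semi)norms involved are integrals over $\Omega$, it suffices to prove a local estimate $\|v-I_hv\|_{H^1(T)}\le c\,h\,|v|_{H^2(T)}$ for each $T\in\tau_h$ with $c$ independent of $T$ and $h$, and then square and sum over $T$.

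For the local estimate I would pass to the reference triangle $\widehat T$ with vertices $(0,0),(1,0),(0,1)$ and its nodal interpolant $\widehat I$. The map $\widehat I-\mathrm{Id}$ is bounded from $H^2(\widehat T)$ into $H^1(\widehat T)$ (again using $H^2(\widehat T)\hookrightarrow C(\overline{\widehat T})$) and annihilates the space $P_1$ of affine functions, since $\widehat I$ reproduces them. Hence the Bramble--Hilbert lemma — equivalently, the Deny--Lions estimate $\inf_{\widehat p\in P_1}\|\widehat v-\widehat p\|_{H^2(\widehat T)}\le C|\widehat v|_{H^2(\widehat T)}$ combined with that boundedness — gives
\[
\|\widehat v-\widehat I\widehat v\|_{H^1(\widehat T)}\le C\,|\widehat v|_{H^2(\widehat T)}\qquad(\widehat v\in H^2(\widehat T)),
\]
with $C$ depending only on $\widehat T$.

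Next I would transport this back along the affine equivalence. For $T\in\tau_h$ write the affine bijection $\widehat T\to T$ as $F_T(\widehat x)=B_T\widehat x+b_T$ and set $\widehat v=v\circ F_T$, so that $I_hv\circ F_T=\widehat I\widehat v$. Applying the change of variables and the chain rule to transform the $L^2$-, $H^1$- and $H^2$-seminorms between $T$ and $\widehat T$ converts the reference estimate into
\[
|v-I_hv|_{H^1(T)}\le C\,\|B_T^{-1}\|\,\|B_T\|^2\,|v|_{H^2(T)},\qquad \|v-I_hv\|_{L^2(T)}\le C\,\|B_T\|^2\,|v|_{H^2(T)}.
\]
Using the standard mesh bounds $\|B_T\|\le C h_T$ and $\|B_T^{-1}\|\le C\rho_T^{-1}$ ($h_T$ the diameter, $\rho_T$ the inradius of $T$), the quasi-uniformity of $\{\tau_h\}$ (which bounds $h_T/\rho_T$ uniformly), and $h_T\le 2h<2$, one gets $\|v-I_hv\|_{H^1(T)}\le c\,h\,|v|_{H^2(T)}$. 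Squaring, summing over $T\in\tau_h$ and using additivity $\sum_{T\in\tau_h}|v|_{H^2(T)}^2=|v|_{H^2(\Omega)}^2$ yields $(\ref{eq:8})$ with $\chi=I_hv$.

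The only genuinely technical point is the bookkeeping in the scaling step: tracking the exact powers of $\|B_T\|$, $\|B_T^{-1}\|$ and $|\det B_T|$ that appear when the seminorms are pulled back to $\widehat T$ and pushed forward again, and checking that quasi-uniformity is precisely what makes the resulting constant independent of $T$ and $h$. Everything else — the two-dimensional Sobolev embedding, the reproduction of affine functions by nodal interpolation, and the Bramble--Hilbert lemma on a fixed reference domain — is entirely standard.
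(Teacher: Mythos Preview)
The paper does not prove this proposition at all; it merely cites it as a classical estimate (referring to \cite[Korollar 9.28]{AU19}). Your argument via nodal interpolation, the Bramble--Hilbert lemma on the reference triangle, and affine scaling is exactly the standard proof one finds in the cited literature, and the outline is sound.

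There is, however, one genuine gap in what you wrote. You assert that ``$I_hv$ \ldots\ vanishes on $\partial\Omega$, hence $I_hv\in V_h$'', but this is only true if $v$ itself vanishes on $\partial\Omega$: the nodal interpolant inherits its boundary values from $v$ at the boundary vertices. For a general $v\in H^2(\Omega)$ your $I_hv$ need not lie in $V_h$, so the chain $\inf_{\chi\in V_h}\|v-\chi\|_{H^1}\le\|v-I_hv\|_{H^1}$ breaks. In fact the estimate as literally stated cannot hold for arbitrary $v\in H^2(\Omega)$: take $v\equiv 1$, so that $|v|_{H^2(\Omega)}=0$, while every $\chi\in V_h$ vanishes on $\partial\Omega$ and hence $\|v-\chi\|_{H^1(\Omega)}$ is bounded below by a positive trace constant. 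The intended hypothesis is $v\in H^2(\Omega)\cap H^1_0(\Omega)$, which is precisely how the proposition is applied later in the proof of Theorem~\ref{th:2.4}. Under that hypothesis your nodal interpolant does lie in $V_h$ and your argument goes through; you should state this restriction explicitly rather than claim the boundary condition for free.
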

Note that Proposition~\ref{prop:2.3} shows how we can approximate functions in $H^2(\Omega)$ by finite elements and so far there is no relation with the solutions of the Poisson equation. 

We assume the uniqueness condition (\ref{eq:6}). Then by Theorem~\ref{th:4.1}, since the form $a$ is essentially coercive,   there exists $h_0\in (0,1]$ such that for $0<h\leq h_0$ and $u\in V_h$
\begin{equation}\label{eq:9}
a(u,\chi)=0\mbox{ for all }\chi\in V_h\mbox{ implies }u\in V_h.
\end{equation}  
Let $f\in L^2(\Omega)$. Since $V_h$ is finite dimensional, it follows from (\ref{eq:9}) that for all $0<h\leq h_0$, there exists a unique $u_h\in V_h$ such that
\begin{equation}\label{eq:10}
a(u_h,\chi)=\int_{\Omega}f\chi \mbox{ for all }\chi\in V_h.
\end{equation}    
The finite elements $(u_h)_{0<h\leq h_0}$ are the approximation of the solution of (\ref{eq:5}) we are interested in. They converge in $H^1(\Omega)$ with convergence order $1$ and in $L^2(\Omega)$ with convergence order $2$. More precisely, the following is our main theorem of this section. 
 \begin{thm}\label{th:2.4}
 Let $f\in L^2(\Omega)$ and consider the approximate solutions $u_h$, $0<h\leq h_0$. Then there exist $0<h_1\leq h_0$ and constants $c_1,c_2$ independent of $f$ such that
 \begin{equation}\label{eq:7.7}
 \|u-u_h\|_{H^1(\Omega)}\leq c_1h \|f\|_{L^2(\Omega)}
 \end{equation} 
 and 
 \begin{equation}\label{eq:7.8}
 \|u-u_h\|_{L^2(\Omega)}\leq c_2 h^2 \|f\|_{L^2(\Omega)}
 \end{equation}
 where $u$ is the solution of (\ref{eq:5}). 
 \end{thm}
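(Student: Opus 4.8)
The plan is to combine the abstract results of Sections~\ref{sec:3}--\ref{sec:5} (applied to $\V=H_0^1(\Omega)$, pivot space $\GH=L^2(\Omega)$, the form $a$, and the subspaces $V_h$) with the interpolation estimate of Proposition~\ref{prop:2.3} and the quantitative $H^2$-regularity underlying Theorem~\ref{th:2.1}. First I would record two preliminaries. \textbf{(a)} As $h\to 0$ the family $\{V_h\}$ is approximating: given $v\in H_0^1(\Omega)$, choose $w\in H^2(\Omega)\cap H_0^1(\Omega)$ (e.g.\ smooth, compactly supported) with $\|v-w\|_{H^1(\Omega)}$ small; then $\dist(w,V_h)\leq ch|w|_{H^2(\Omega)}\to 0$ by Proposition~\ref{prop:2.3}, so $\dist(v,V_h)\to 0$. \textbf{(b)} By the closed graph theorem the solution operator $\A^{-1}\colon L^2(\Omega)\to H^2(\Omega)\cap H_0^1(\Omega)$ is bounded: it is well defined by Theorem~\ref{th:2.1} and closed, since $f_k\to f$ in $L^2(\Omega)$ and $\A^{-1}f_k\to w$ in $H^2(\Omega)$ force $a(w,v)=\langle f,v\rangle_{L^2}$ for all $v\in H_0^1(\Omega)$. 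The adjoint form $a^*$ has the same structure (this is precisely the role of the coefficients $c_j$), so likewise $\A^{*-1}\colon L^2(\Omega)\to H^2(\Omega)\cap H_0^1(\Omega)$ is bounded; write $C_0$ for a common bound on the $H^2$-norm of $\A^{-1}g$ and $\A^{*-1}g$ for $\|g\|_{L^2}\leq 1$.

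For \eqref{eq:7.7}: since $a$ is essentially coercive and, under \eqref{eq:6}, satisfies uniqueness (Theorem~\ref{th:2.1}), Theorem~\ref{th:4.1} gives the universal Galerkin property. Applying it to sequences $h_k\downarrow 0$ — and passing to the continuous parameter by a routine contradiction argument via Theorem~\ref{th:3.2} (if the discrete inf--sup constants degraded to $0$ along some $h_k\to 0$, then $(V_{h_k})$ would be an approximating sequence violating (BNB), contradicting Galerkin convergence for that sequence) — yields $\beta>0$ and $h_1\in(0,h_0]$ such that (BNB) holds uniformly for $\{V_h\}_{0<h\leq h_1}$ and, by Proposition~\ref{prop:3.3KatoArendt}, $\|u-u_h\|_{H^1(\Omega)}\leq \frac{M}{\beta}\dist(u,V_h)$. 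Now $u=\A^{-1}f\in H^2(\Omega)$ with $|u|_{H^2(\Omega)}\leq C_0\|f\|_{L^2(\Omega)}$, so Proposition~\ref{prop:2.3} gives $\dist(u,V_h)\leq cC_0h\|f\|_{L^2(\Omega)}$, and \eqref{eq:7.7} follows with $c_1=\frac{M}{\beta}cC_0$.

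For \eqref{eq:7.8} I would invoke Theorem~\ref{th:n6.2} with $\GH=L^2(\Omega)$ and $\X=L^2(\Omega)\hookrightarrow \V'=H^{-1}(\Omega)$, which, for $0<h\leq h_1$, gives $\|u-u_h\|_{L^2(\Omega)}\leq \frac{M^2}{\beta}\,\gamma_h(L^2(\Omega))\,\gamma_h^*(L^2(\Omega))\,\|f\|_{L^2(\Omega)}$. It remains to show both factors are $O(h)$: by definition $\gamma_h(L^2(\Omega))=\sup_{\|g\|_{L^2}\leq 1}\dist(\A^{-1}g,V_h)$, and since $\A^{-1}g\in H^2(\Omega)$ with $|\A^{-1}g|_{H^2(\Omega)}\leq C_0$, Proposition~\ref{prop:2.3} gives $\gamma_h(L^2(\Omega))\leq cC_0h$; the identical argument applied to $a^*$ (using boundedness of $\A^{*-1}$) gives $\gamma_h^*(L^2(\Omega))\leq cC_0h$. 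Substituting, \eqref{eq:7.8} holds with $c_2=\frac{M^2}{\beta}(cC_0)^2$.

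The main obstacle is bookkeeping rather than any single deep step: one must carefully verify that $\{V_h\}$ functions as an approximating sequence and that the (BNB)/universal-Galerkin constants can be chosen uniformly in the continuous parameter $h$ (the passage from the sequential statements of Theorems~\ref{th:4.1} and~\ref{th:3.2} to all small $h$), and one must phrase the elliptic $H^2$-regularity quantitatively for both $a$ and $a^*$ so that $\gamma_h$ and $\gamma_h^*$ are genuinely $O(h)$. Once these are in hand, \eqref{eq:7.7} and \eqref{eq:7.8} follow directly from Proposition~\ref{prop:2.3} together with Proposition~\ref{prop:3.3KatoArendt} and Theorem~\ref{th:n6.2} respectively.
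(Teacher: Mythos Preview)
Your proposal is correct and follows essentially the same route as the paper: the closed graph theorem for the quantitative $H^2$-bound, Theorem~\ref{th:4.1} (essential coercivity) to secure a uniform (BNB) constant below some $h_1$, Proposition~\ref{prop:2.3} for the $H^1$-estimate, and Theorem~\ref{th:n6.2} with $\X=\GH=L^2(\Omega)$ plus the $O(h)$ bound on $\gamma_h,\gamma_h^*$ for the $L^2$-estimate. You are, if anything, slightly more explicit than the paper about the bookkeeping (verifying $\{V_h\}$ is approximating, the passage from sequences to the continuous parameter $h$, and the need for $H^2$-regularity for $a^*$ as well), whereas the paper handles the continuous-parameter issue for \eqref{eq:7.8} by a short contradiction argument; these are stylistic differences, not substantive ones.
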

\begin{proof}
Applying the closed graph theorem in the situation of Theorem~\ref{th:2.1}, we find a constant $c_3>0$ such that 
\begin{equation}\label{eq:13}
\|u\|_{H^2(\Omega)}\leq c_3 \|f\|_{L^2(\Omega)}
\end{equation} 	
whenever $f\in L^2(\Omega)$ and $u$ solves (\ref{eq:5}). 

By Theorem~\ref{th:4.1}, there exist $\gamma>0$, $0<h_1\leq h_0$, both independent of $f$, such that 
\[  \|u-u_h\|_{H^1(\Omega)}\leq \gamma \inf_{\chi\in V_h} \|\chi -u\|_{H^1(\Omega)}  \]
for all $0<h\leq h_1$. Thus (\ref{eq:8}) implies that for $0<h\leq h_1$, 
\[\|u_h-u\|_{H^1(\Omega)}\leq ch\gamma|u|_{H^2(\Omega)}.\]  
Now (\ref{eq:7.7}) follows from (\ref{eq:13}). 

Next we establish the $L^2$-estimate (\ref{eq:7.8}). For that we compute using (\ref{eq:8}),
\[   \gamma_h(\GH)=\sup_{w\in H_0^1(\Omega)\cap H^2(\Omega)} \frac{\dist(w,\V_h)}{\|  A w\|_{L^2(\Omega)}}  \leq \sup_{w\in H_0^1(\Omega)\cap H^2(\Omega)}\frac{ch |w|_{H^2(\Omega)}}{\| A w\|_{L^2(\Omega)}} .  \]
Since $|w|_{H^2(\Omega)}\leq \|w\|_{H^2(\Omega)}$, it follows from (\ref{eq:13}) that $\gamma_h(\GH)\leq cc_3 h$ for all $h>0$. 

The same estimate is true for $\gamma_h^*(\GH)$. Now assume that (\ref{eq:7.8}) is false. Then there exists a sequence $h_n\downarrow0$ as $n\to\infty$ such that (\ref{eq:7.8}) does not hold for all $h=h_n$ and any constant $c_2$. This contradicts Theorem~\ref{th:n6.2}. 
\end{proof}	

\begin{rem}
There are other methods to approximate the solution of a non-coercive advection-diffusion equation as (\ref{eq:5}). In fact, Le Bris, Legoll and Madiot \cite{BLM16} use the Banach-Ne\v cas-Babuska lemma (instead of essential coercivity as we do) and a special measure to construct an approximation. 

The advantage is that no initial mesh $h_1$ has to be considered; on the other hand there seems to be no such precise error estimate as our quadratic convergence obtained in Theorem~\ref{th:2.4} even though numerical examples are given in \cite{BLM16}. 

Still, another approach (based on Fredholm perturbation) is presented by Christensen \cite{ch03}, which also involves the Babuska inf-sup condition. 

Finally, let us mention the works by Droniou, Gallou\"et and Herbin \cite{dgh03}, based on finite volume methods, which also present the advantage to provide an approximate solution for this problem on any admissible mesh. 

One of the first results on the Galerkin method in a special non-coercive case are due to Schatz \cite{schatz} and Schatz--Wang \cite{schatz-wang}.    
\end{rem}

\section{Supplement: saddle point problems}\label{sec:8}

Brezzi's contribution \cite{brezzi} is a version of (BNB) which implies the convergence of the Galerkin approximation in the case of saddle point problems. Let us consider the case where  $\W$ and $\Y$ are real Hilbert spaces and $\widehat{a}~:~\W\times\W \to {\mathbb R}$ and  $\widehat{b}~:~\W\times\Y \to {\mathbb R}$ are continuous bilinear forms  in the sense that there exists $M>0$ with
\[   | \widehat{a}(w,v)|\leq M \|w\|_{\W}\| v \|_{\W}\mbox{ for all }w\in \W,v\in \W  \] 
and
\[   | \widehat{b}(w,q)|\leq M \|w\|_{\W}\| q \|_{\Y}\mbox{ for all }w\in \W,q\in \Y . \] 
Then, given $(f,g)\in \W'\times\Y'$,  the \emph{continuous saddle point problem} consists in finding $(w,p)\in \W\times\Y$ such that
\begin{eqnarray*}
& \forall z\in\W,&  \widehat{a}(w,z)+\widehat{b}(z,p) = f(z),\\
& \forall q\in\Y,&  \widehat{b}(w,q) = g(q).
\end{eqnarray*}

\begin{exam}
An important example is the Stokes problem (motivating some investigation  by Lady\v zhen\-skaya \cite{ladyzhenskaya}), with $\W = (H^1_0(\Omega))^d$, where $d$ is the space dimension, $\Y = L^2_0(\Omega)$ (the space of $L^2$-functions with null average), 
\[\widehat{a}(w,z) = \sum_{i=1}^d\int_{\Omega}\nabla w^{(i)}(x)\cdot\nabla z^{(i)}(x){\rm d}x\]
and \[
     \widehat{b}(z,p) = \int_{\Omega}(\nabla\cdot z)(x) p(x){\rm d}x.\]
\end{exam} 

\medskip

The approximation of the saddle point problem is then generally done by a mixed method \cite{brezzi,brezzifortin}, letting, for $n=1,2,\ldots$, $(\W)_{n\in{\mathbb N}^\star}$ and $(\Y)_{n\in{\mathbb N}^\star}$ be approximating sequences in the spaces $\W$ and $\Y$, respectively,  in the sense of Definition~\ref{def:appseq}, and looking for $(w_n,p_n)\in \W_n\times\Y_n$ such that
\begin{eqnarray*}
& \forall z\in\W_n,&  \widehat{a}(w_n,z)+\widehat{b}(z,p_n) = f(z),\\
& \forall q\in\Y_n,&  \widehat{b}(w_n,q) = g(q).
\end{eqnarray*}
We call this the \emph{approximate saddle point problem.}\\

The following result shows that conditions \eqref{eq:infsupwzp} (which are Brezzi's conditions \cite[Hypotheses H1 and H2]{brezzi}) are sufficient for the convergence of the solutions of the approximate saddle point problems. This is proved by Brezzi  \cite[Theorem 2.1]{brezzi}, where a solution is assumed to exist. However, similar to the proof of our Proposition \ref{prop:3.3}, one can show that Brezzi's conditions imply existence and uniqueness of the continuous saddle point problem. Indeed,  following the proof of \eqref{eq:brezzithm} given in the proof of  \cite[Theorem 2.1]{brezzi}, letting $w=0$ and $p=0$, we get a bound on the approximate solution, and a solution of the  continuous problem can be obtained by passing to the limit of a weakly converging subsequence. Uniqueness follows from the estimate (8.2) proved by Brezzi. 
For $n=1,2,\ldots$, define 
\[
\W_{0,n} = \{ u\in \W_n;~\forall q\in \Y_n,~ \widehat{b}(u,q) = 0\}
\]
and assume that $\W_{0,n}^* :=\W_{0,n}\setminus \{0\}\neq \emptyset$ and $\Y_{n}^* :=\Y_{n}\setminus \{0\}\neq \emptyset$ for all $n \in \N$.

\begin{thm}[Brezzi]
Assume that there exists $\beta>0$ such that
\begin{equation}\label{eq:infsupwzp}\left\{\begin{array}{ll}(i) &
 \forall n\in{\mathbb N}^\star,\ \inf_{w\in \W_{0,n}^\star} \sup_{z\in \W_{0,n}^\star} \frac {\widehat{a}(w,z)} {\Vert w\Vert_{\W}\Vert z\Vert_{\W}} \ge \beta\\ (ii) &
 \forall n\in{\mathbb N}^\star,\ \inf_{z\in \W_{0,n}^\star} \sup_{w\in \W_{0,n}^\star} \frac {\widehat{a}(w,z)} {\Vert w\Vert_{\W}\Vert z\Vert_{\W}}\ge \beta\\ (iii) &
 \forall n\in{\mathbb N}^\star,\ \inf_{p\in \Y_n^\star} \sup_{z\in \W_{n}^\star} \frac {\widehat{b}(z,p)} {\Vert z\Vert_{\W}\Vert p\Vert_{\Y}} \ge \beta .
 \end{array}\right.
\end{equation}

Then, given  $(f,g)\in \W'\times\Y'$, there exists a unique solution $(w,p)$ of the continuous saddle point problem and for each $n \in \N$ a unique solution $(w_n,p_n)$ of the approximate saddle point problem. Moreover, 
\begin{equation}\label{eq:brezzithm}
\forall n\in{\mathbb N}^\star,\  \|w_n - w\|_W + \|p_n - p\|_Y \leq c \big( \dist(w, W_n) + \dist(p,Y_n)\big)
\end{equation}
where the constant $c$ depends only on $\beta$ and $M$.
\end{thm}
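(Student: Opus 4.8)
The plan is to recast Brezzi's theorem as a single instance of the Petrov--Galerkin theory of Section~\ref{sec:3}, applied to the product space $\X:=\W\times\Y$, and then to verify the uniform $(BNB)$ condition there by the classical construction of a test pair.

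\textbf{Step 1 (reduction to one form on a product space).} Equip $\X=\W\times\Y$ with the Hilbert norm $\|(w,p)\|_\X^2:=\|w\|_\W^2+\|p\|_\Y^2$, and set $\X_n:=\W_n\times\Y_n$. Define the continuous bilinear form
\[A\big((w,p),(z,q)\big):=\widehat a(w,z)+\widehat b(z,p)+\widehat b(w,q)\]
(with continuity constant at most $3M$) and $L\in\X'$ by $\langle L,(z,q)\rangle:=f(z)+g(q)$. Testing against $(z,0)$ and $(0,q)$ shows that the continuous, resp. approximate, saddle point problem is exactly problem \eqref{eq:3.1}, resp. \eqref{eq:3.2}, for the data $A$, $L$, $\X$, $\X_n$. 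Since $\W$ and $\Y$ carry approximating sequences they are separable, so $\X$ is a separable (hence reflexive) Hilbert space; moreover $\dist((w,p),\X_n)\le\dist(w,\W_n)+\dist(p,\Y_n)\to 0$, so $(\X_n)_{n\in\N}$ is an approximating sequence of $\X$, and $0\neq\dim\X_n=\dim\W_n+\dim\Y_n$. By Theorem~\ref{th:3.2} together with Proposition~\ref{prop:3.3}, it therefore suffices to produce $\beta'>0$, depending only on $\beta$ and $M$, such that $A$ satisfies $(BNB)$ on $(\X_n)_{n\in\N}$ with constant $\beta'$: then \eqref{eq:3.1} is well posed, each \eqref{eq:3.2} is well posed, and \eqref{eq:3.7} holds with $\gamma=1+\tfrac{3M}{\beta'}$, which, read back in $\X$ and using that $\|\cdot\|_\X$ is equivalent to the sum norm, is precisely \eqref{eq:brezzithm} with $c=c(\beta,M)$.

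\textbf{Step 2 (the uniform discrete inf--sup for $A$).} Fix $n$ and $(w,p)\in\X_n$. Decompose $w=w_0+w_1$ with $w_0\in\W_{0,n}$ and $w_1$ in the orthogonal complement of $\W_{0,n}$ inside $\W_n$, so that $\|w\|_\W^2=\|w_0\|_\W^2+\|w_1\|_\W^2$ and $\widehat b(w_0,q)=0$ for all $q\in\Y_n$. I would first record the ``transposed'' form of \eqref{eq:infsupwzp}$(iii)$: if $B_n:\W_n\to\Y_n$ represents $\widehat b$ via the scalar products, then \eqref{eq:infsupwzp}$(iii)$ says $B_n^*$ is bounded below by $\beta$; since $B_n$ and $B_n^*$ have the same nonzero singular values and $\ker B_n=\W_{0,n}$, this yields $\sup_{q\in\Y_n,\|q\|_\Y=1}\widehat b(w_1,q)\ge\beta\|w_1\|_\W$. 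Using this, \eqref{eq:infsupwzp}$(iii)$ directly, and \eqref{eq:infsupwzp}$(i)$, one selects (omitting any component that vanishes)
\[z_0\in\W_{0,n},\quad\|z_0\|_\W=\|w_0\|_\W,\quad\widehat a(w_0,z_0)\ge\beta\|w_0\|_\W^2,\]
\[z_p\in\W_n,\quad\|z_p\|_\W\le\beta^{-1}\|p\|_\Y,\quad\widehat b(z_p,p)=\|p\|_\Y^2,\]
\[q_1\in\Y_n,\quad\|q_1\|_\Y=\|w_1\|_\W,\quad\widehat b(w_1,q_1)\ge\beta\|w_1\|_\W^2,\]
and then tests $A$ against $(z,q):=(\varepsilon_1 z_0+\varepsilon_2 z_p,\;q_1)$. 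Expanding $A((w,p),(z,q))$, using $\widehat b(z_0,p)=\widehat b(w_0,q_1)=0$, bounding the cross terms $\widehat a(w_1,z_0)$ and $\widehat a(w,z_p)$ by $M$ times products of norms, and applying Young's inequality, one obtains $A((w,p),(z,q))\ge c_1\|w_0\|_\W^2+c_2\|w_1\|_\W^2+c_3\|p\|_\Y^2$; choosing first $\varepsilon_1:=\beta^2/M^2$ and then $\varepsilon_2$ small enough makes $c_1,c_2,c_3>0$, depending only on $\beta,M$. Since simultaneously $\|(z,q)\|_\X\le C(\|w\|_\W+\|p\|_\Y)$ with $C=C(\beta,M)$, dividing gives $\sup_{(z,q)\in\X_n,\|(z,q)\|_\X=1}|A((w,p),(z,q))|\ge\beta'\|(w,p)\|_\X$ with $\beta'=\beta'(\beta,M)$, which is $(BNB)$.

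\textbf{Step 3 (conclusion and main difficulty).} By Step 1 this establishes existence and uniqueness of $(w,p)$ and of each $(w_n,p_n)$, together with the estimate \eqref{eq:brezzithm}. Running the same construction for the adjoint form $A^*$ — for which the roles of \eqref{eq:infsupwzp}$(i)$ and $(ii)$ are exchanged — gives $(BNB^*)$ for $A$, consistently with Proposition~\ref{prop:eqbetabetastar}. The main obstacle is Step 2: designing the test pair and carrying out the Young-inequality bookkeeping so that the resulting inf--sup constant depends only on $\beta$ and $M$ and is uniform in $n$, together with the elementary but essential duality observation that \eqref{eq:infsupwzp}$(iii)$ also controls $\widehat b(w_1,\cdot)$ on the orthogonal complement of the discrete kernel $\W_{0,n}$.
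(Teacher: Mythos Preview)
Your argument is correct, but it takes a different route from the paper. The paper does not give a self-contained proof of Brezzi's theorem: it \emph{cites} Brezzi \cite[Theorem~2.1]{brezzi} for the error estimate \eqref{eq:brezzithm}, and for the well-posedness of the continuous problem (which Brezzi assumes) it invokes a weak-compactness argument in the spirit of Proposition~\ref{prop:3.3}: the approximate solutions are bounded by Brezzi's estimate with $w=0$, $p=0$, a weak limit satisfies the continuous problem, and uniqueness follows a posteriori from~\eqref{eq:brezzithm}.

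Your approach instead shows directly that Brezzi's conditions \eqref{eq:infsupwzp}$(i)$,$(iii)$ imply the uniform $(BNB)$ condition for the product form $A$ on $\X_n=\W_n\times\Y_n$, and then obtains everything from Proposition~\ref{prop:3.3}. This is essentially the converse of the computation the paper carries out in the \emph{next} theorem (where $(BNB)$ is shown to imply \eqref{eq:infsupwzp}). Your construction of the test pair $(z,q)=(\varepsilon_1 z_0+\varepsilon_2 z_p,\,q_1)$ and the Young-inequality bookkeeping are standard and correct; the singular-value argument for the transposed form of \eqref{eq:infsupwzp}$(iii)$ is exactly the right observation (and reappears in the paper's converse proof as the bijectivity of $\widehat{\mathcal B}_n$ from $\W_{0,n}^\perp$ to $\Y_n$). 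You are also right that in this discrete setting condition~\eqref{eq:infsupwzp}$(ii)$ is not needed for $(BNB)$, consistently with Proposition~\ref{prop:eqbetabetastar}. The payoff of your route is a proof that stays entirely within the paper's own framework and makes explicit that Brezzi's conditions and $(BNB)$ for $A$ are quantitatively equivalent; the paper's route is shorter because it outsources the hard estimate to the original reference.
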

	The saddle point problem can be cast in our framework by letting
$\V = \U = \W\times\Y$, $u = (w,p)$, $v=(z,q)$ and  \[a(u,v) = \widehat{a}(w,z)+\widehat{b}(z,p)+\widehat{b}(w,q).\]

Given $(f,g) \in \V' = \W' \times \Y'$, define $L \in \V'$ by  \[\langle L,(z,q) \rangle = \langle f,z \rangle + \langle g,q\rangle.\] 
Then  $u = (w,p)$ is a solution of the continuous saddle point problem if and only if (\ref{eq:int1}) is satisfied.
Moreover, letting $\V_n = \U_n = \W_n\times \Y_n$, a vector $u_n = (w_n,p_n)\in \V_n$ satisfies  \eqref{eq:int2} if and only if  $(w_n, p_n)$  is a solution of the approximate saddle point problem. Thus our Theorem  \ref{th:3.2} shows that the convergence property expressed in Brezzi's Theorem is equivalent to (BNB)  for the form $a$ and the approximating sequence $(\V_n)$. We can use this to show the following converse result of Brezzi's Theorem.

\begin{thm}
Assume that, given $(f,g)\in \W' \times \Y'$,  for each $n\in  \N $, there is a unique solution $(w_n,p_n)$ of the discrete saddle point problem and
that $\sup _{n\in\N} (\|w_n\|_W + \|p_n\|_Y) < \infty.$
Then Brezzi's conditions (\ref{eq:infsupwzp}) hold.
\end{thm}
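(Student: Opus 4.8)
The plan is to work entirely in the product formulation already set up: $\V=\U=\W\times\Y$, $\V_n=\U_n=\W_n\times\Y_n$, and $a(u,v)=\widehat a(w,z)+\widehat b(z,p)+\widehat b(w,q)$ for $u=(w,p)$, $v=(z,q)$. Under this identification the discrete saddle point problem for data $(f,g)$ is precisely the finite-dimensional problem (\ref{eq:3.2}) for $a$ and the functional $L$ built from $(f,g)$, and $(\V_n)_{n\in\N}$ is an approximating sequence of $\V$. The existence-and-uniqueness hypothesis for every $(f,g)$ is, taking $(f,g)=0$, exactly condition (\ref{eq:3.3}) for $a$ on each $\V_n$ (and conversely (\ref{eq:3.3}) forces unique solvability, since $\dim\U_n=\dim\V_n$), while $\sup_n(\|w_n\|_\W+\|p_n\|_\Y)<\infty$ is just $\sup_n\|u_n\|_\V<\infty$. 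Hence the hypotheses of Proposition~\ref{prop:3.4} hold, so $(BNB^*)$ holds for $a$, and then Theorem~\ref{th:3.2} gives $(BNB)$ for $a$: there is $\beta_0>0$ with $\sup_{v\in\V_n,\|v\|_\V=1}|a(u,v)|\ge\beta_0\|u\|_\V$ for all $n$ and all $u\in\V_n$.

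The next task is to recover Brezzi's three conditions from this single inf--sup inequality. Write $A_n\in\mathcal L(\W_n)$ and $B_n\in\mathcal L(\W_n,\Y_n)$ for the Hilbert-space representations of $\widehat a|_{\W_n\times\W_n}$ and $\widehat b|_{\W_n\times\Y_n}$, so that $\W_{0,n}=\ker B_n$ and $\widehat b(z,p)=\langle z,B_n^*p\rangle_\W$. Condition $(iii)$ is immediate: applying $(BNB)$ to $u=(0,p)$ kills all terms except $\widehat b(z,p)$ and gives $\sup_{z\in\W_n,\|z\|_\W=1}\widehat b(z,p)=\|B_n^*p\|_\W\ge\beta_0\|p\|_\Y$, which is $(iii)$ with constant $\beta_0$; in particular $B_n^*$ is injective with range $\W_{0,n}^{\perp}$ (orthogonal complement in $\W_n$), so $B_n^*\colon\Y_n\to\W_{0,n}^{\perp}$ is invertible with inverse of norm $\le1/\beta_0$.

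For condition $(i)$ I would fix $w\in\W_{0,n}$ with $\|w\|_\W=1$ and let $P_0$ be the orthogonal projection of $\W_n$ onto $\W_{0,n}$. Since $(I-P_0)A_n w\in\W_{0,n}^{\perp}$, one can pick $p\in\Y_n$ with $B_n^*p=-(I-P_0)A_n w$ and $\|p\|_\Y\le M/\beta_0$. A short computation, using $w\in\ker B_n$ (so $\widehat b(w,q)=0$) and $\widehat b(z_0,p)=0$ for $z_0\in\W_{0,n}$, then shows $a\big((w,p),(z,q)\big)=\widehat a(w,P_0z)$ for every $(z,q)\in\V_n$, because the chosen $p$ exactly cancels the contribution of the $\W_{0,n}^{\perp}$-component of $z$. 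Taking the supremum over the unit sphere of $\V_n$ collapses it to $\sup_{z\in\W_{0,n},\|z\|_\W=1}|\widehat a(w,z)|$, and $(BNB)$ bounds this below by $\beta_0\|(w,p)\|_\V\ge\beta_0$; as $\widehat a$ is real the absolute value may be dropped, so $(i)$ holds with constant $\beta_0$. Finally $(ii)$ follows: for each fixed $n$ the form $\widehat a|_{\W_{0,n}\times\W_{0,n}}$ is represented by a square operator $A_{00,n}$ on the finite-dimensional space $\W_{0,n}$, and $(i)$ and $(ii)$ assert respectively $\|A_{00,n}^{-1}\|\le1/\beta_0$ and $\|(A_{00,n}^{*})^{-1}\|\le1/\beta_0$, which coincide exactly as in Proposition~\ref{prop:eqbetabetastar}. (Equivalently $(ii)$ is $(i)$ applied to $a^*$, for which $(BNB)$ also holds by Theorem~\ref{th:3.2}, and whose saddle-point structure has the same $\widehat b$ and the transpose of $\widehat a$.)

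The routine parts are the reduction to $(BNB)$ and the verification of $(iii)$; the substantive step is the derivation of $(i)$. The obstacle is that the supremum in $(i)$ runs only over the kernel $\W_{0,n}$, whereas $(BNB)$ a priori controls only the supremum over all of $\W_n$, so $(i)$ cannot be obtained by testing $(BNB)$ against a single well-chosen vector as for $(iii)$. The device that resolves this is to exploit the surjectivity of $B_n^*$ onto $\W_{0,n}^{\perp}$, already furnished by $(iii)$, to steer the pressure component $p$ so that the entire test contribution over $\W_{0,n}^{\perp}$ is annihilated; after that, only orthogonal-projection bookkeeping with the constants $M$ and $\beta_0$ remains.
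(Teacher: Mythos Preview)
Your proof is correct and follows essentially the same route as the paper: reduce to $(BNB)$ for the product form via Proposition~\ref{prop:3.4} and Theorem~\ref{th:3.2}, obtain $(iii)$ by testing $(BNB)$ on $u=(0,p)$, obtain $(i)$ by choosing the pressure $p$ so that the contribution of $\W_{0,n}^{\perp}$ is cancelled (the paper writes this via $\widehat{\mathcal B}_n^{-1}$ rather than $B_n^{*}$, but it is the same $p$), and deduce $(ii)$ from the adjoint equivalence. The only cosmetic differences are that the paper picks a maximizing test vector $(z,q)$ and decomposes it, whereas you compute directly that the supremum collapses to $\W_{0,n}$, and your operator-norm argument $\|A_{00,n}^{-1}\|=\|(A_{00,n}^{*})^{-1}\|$ for $(ii)$ is slightly more explicit than the paper's appeal to $(BNB^{*})$.
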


\begin{proof} We know from Theorem \ref{th:3.2} and Proposition \ref{prop:3.3} that (BNB) is satisfied  for some $\beta>0$. We endow the space $\V$ with the norm $\Vert u\Vert_{\V} = \big(\Vert w\Vert_{\W}^2+\Vert p\Vert_{\Y}^2\big)^{1/2}$ for $u = (w,p)$ (it is then a Hilbert space as well). Let $n\in{\mathbb N}^\star$ be given. We then have,
\begin{equation}\label{eq:bnbmixte}
 \forall (w,p)\in \W_n\times\Y_n,\ \sup_{(z,q)\in  \W_n\times\Y_n\setminus\{(0,0)\}} \frac {|a((w,p),(z,q))|} {\Vert(z,q)\Vert_{\V}}\ge \beta \Vert(w,p)\Vert_{\V}.
\end{equation}
Let us first choose, for any $p\in \Y_n^\star$, $u=(0,p)$, which means that $w=0\in\W_n$. Let $(z,q)\in \W_n\times\Y_n\setminus\{(0,0)\}$ attaining the supremum value in \eqref{eq:bnbmixte}. We then have, from the definition of $a$ in this framework of a saddle point problem,
\[
 \frac {|\widehat{b}(z,p)|} {\big(\Vert z\Vert_{\W}^2+\Vert q\Vert_{\Y}^2\big)^{1/2}} \ge \beta \Vert p\Vert_{\Y},
\]
which implies that $z\neq 0$ and
\[
 \frac {|\widehat{b}(z,p)|} {\Vert z\Vert_{\W}} \ge \beta \Vert p\Vert_{\Y}.
\]
This proves \eqref{eq:infsupwzp}.$(iii)$ , and thus that the operator $\widehat{\mathcal B}_n~:~\W_n\to \Y_n$, defined for all $z\in \W_n$ by
\[
 \forall q\in \Y_n,\ \widehat{b}(z,q) =\langle \widehat{\mathcal B}_n z,q\rangle_{\Y},
\]
is bijective from $\W_{0,n}^\perp$ to $\Y_n$.

\medskip

Let $w\in \W_{0,n}^\star$ and let $p\in \Y_n$ be defined by
\[
 \forall q\in \Y_n,\ \langle q,p\rangle_{\Y} = \widehat{b}(\widehat{\mathcal B}_n^{(-1)}q,p) = -\widehat{a}(w,\widehat{\mathcal B}_n^{(-1)}q).
\]
Choose an element $(z,q)\in \W_n\times\Y_n\setminus\{(0,0)\}$ attaining the supremum value in \eqref{eq:bnbmixte} for this choice of $u = (w,p)$. We then write $z = z_0 + z_1$, with $z_0\in \W_{0,n}$ and $z_1\in \W_{0,n}^\perp$, which can be written as $z_1 = \widehat{\mathcal B}_n^{(-1)}q_1$ for some $q_1\in \Y_n$. We have
\[
 a((w,p),(z,q)) = \widehat{a}(w,z_0) +  \widehat{a}(w,z_1) + \widehat{b}(z_0,p)+ \widehat{b}(z_1,p) + \widehat{b}(w,q).
\]
Moreover,  $\widehat{b}(z_0,p) = \widehat{b}(w,q)=0$ since $z_0\in \W_{0,n}$ and $w\in \W_{0,n}$, and 
\[
 \widehat{a}(w,z_1) + \widehat{b}(z_1,p) = 0,
\]
by definition of $p$ and of $z_1$. Hence
\[
 \frac {|\widehat{a}(w,z_0)|} {\big(\Vert z\Vert_{\W}^2+\Vert q\Vert_{\Y}^2\big)^{1/2}} \ge \beta \big(\Vert w\Vert_{\W}^2+\Vert p\Vert_{\Y}^2\big)^{1/2}.
\]
This implies that $z_0\neq 0$, and therefore $z_0\in \W_{0,n}^\star$ is such that
\[
 \frac {|\widehat{a}(w,z_0)|} {\Vert z_0\Vert_{\W}} \ge \beta \Vert w\Vert_{\W},
\]
where we take into account that $\Vert z\Vert_{\W} \ge \Vert z_0\Vert_{\W}$ by Pythagore's theorem. This concludes the proof of \eqref{eq:infsupwzp}.$(i)$.

\medskip

The equivalence between $(BNB)$ and $(BNB^\star)$ allows to obtain the proof of \eqref{eq:infsupwzp}.$(ii)$ (with the same $\beta$, see Proposition \ref{prop:eqbetabetastar}), following the same path.

\end{proof}

In conclusion, Brezzi's conditions (\ref{eq:infsupwzp}) are equivalent to the well posedness of the continuous saddle point problem together with the convergence of the approximate solutions to the solution, and they are also equivalent to (BNB) for the form $a$ and the approximating sequence $(\V_n)$ of $\V$.

Note that \cite[Chapter II, Remark 2.11]{brezzifortin} provides a comment on the fact that \eqref{eq:infsupwzp}.$(iii)$  is a necessary condition.

\noindent \textbf{Acknowledgments:}  We are most grateful to Gilles Lancien about a discussion on the approximation property and pointing out the survey article of Casazza \cite{Cas} to us. We also thank the anonymous referee for useful and inspiring comments.
This research is partly supported by the B\'ezout Labex, funded by ANR, reference ANR-10-LABX-58.  
\bibliographystyle{abbrv}
\bibliography{horsdoeuvrefinal}
\end{document}